\theoremstyle{plain}
\newtheorem{theorem}{Theorem}[section]
\theoremstyle{plain}
\newtheorem{corollary}[theorem]{Corollary}
\theoremstyle{plain}
\newtheorem{lemma}[theorem]{Lemma}
\theoremstyle{plain}
\newtheorem{proposition}[theorem]{Proposition}
\theoremstyle{plain}
\newtheorem{conjecture}[theorem]{Conjecture}
\theoremstyle{definition}
\newtheorem{definition}[theorem]{Definition}
\theoremstyle{definition}
\newtheorem{example}[theorem]{Example}
\theoremstyle{remark}
\newtheorem{remark}[theorem]{Remark}
\begin{document}

\title{A modern perspective on rational homotopy theory}
\author{Eleftherios Chatzitheodoridis}
\address{Department of Mathematics, University of Virginia, 141 Cabell Drive, Kerchof Hall, Charlottesville, VA 22904, USA}
\email{thp5uc@virginia.edu}
\subjclass{55P62 (Primary), 55P60, 55U10, 55U35, 18N55, 18N50, 18N40 (Secondary).}
\keywords{rational homotopy theory, model categories, left-induced model structures, left Bousfield localization.}
\maketitle

\begin{abstract}
In Quillen's paper on rational homotopy theory, the category of $1$-reduced simplicial sets is endowed with a family of model structures, the most prominent of which is the one in which the weak equivalences are the rational homotopy equivalences and the fibrant objects are the rational Kan complexes. In this paper, we give a modern approach to this family of model structures. We recover Quillen's family of model structures by first left-transferring the model structure on pointed simplicial sets and then left Bousfield localizing at the rationalization maps of spheres. Applying this localization to the model category of all spaces yields a model category in which the weak equivalences are the rational homotopy equivalences in the extended sense of G{\'{o}}mez-Tato, Halperin, and Tanr{\'{e}} and the fibrant objects are the rational spaces. Thus, we generalize Quillen's family of model structures beyond the rational homotopy theory of $1$-connected spaces.
\end{abstract}

\section{Introduction}\label{section1}
Classically, rational homotopy theory is the study of $1$-connected spaces up to the torsion in their homotopy groups. The non-vanishing homotopy groups of a $1$-connected space are abelian, so their torsion can be discarded by tensoring with the rational numbers. In this framework, we adjust our notion of equivalence from that of a weak homotopy equivalence, which preserves all homotopy groups on the nose, to that of a \emph{rational homotopy equivalence}, which induces isomorphisms of higher homotopy groups only after tensoring with the rational numbers. Conceptually, a rational homotopy equivalence can be thought of as a weak homotopy equivalence up to an error in torsion. In the $1$-connected setting, one can equivalently use rational homology instead; a map of $1$-connected spaces is a rational homotopy equivalence if and only if it is a rational homology isomorphism.

Accordingly, the corresponding notion of a nice space in rational homotopy theory is a space whose higher homotopy groups are already rational vector spaces, that is, uniquely divisible by every positive integer; such spaces are called \emph{rational}. In \cite{Sullivan1970}, Sullivan constructs, for a $1$-connected space $X$, a relative CW complex $(X_{\mathbb{Q}}, X)$ such that $X_{\mathbb{Q}}$ is a rational space and the relative CW inclusion is a rational homotopy equivalence. This construction is called the \emph{rationalization} of $X$, and it identifies the rational homotopy groups of $X$ with the higher homotopy groups of $X_{\mathbb{Q}}$ in a topological fashion.

The geometric construction that underpins the rationalization of spaces is the rationalization of the $n$-sphere $S^{n}$ in dimensions $n \geq 2$ and, more specifically, the construction of the \emph{rational $n$-sphere} $S^{n}_{\mathbb{Q}}$. In essence, the rationalization of a general space is built out of the CW inclusions of $n$-spheres into rational $n$-spheres, hence the resulting relative CW construction. Thus, the rationalization maps of spheres are the building blocks for the rational homotopy theory of spaces.

Working rationally can simplify computations significantly. In the case of spheres, though their homotopy groups are notoriously difficult to compute, their rational homotopy groups were computed by Serre in \cite{Serre1951}. This contrast captures the fact that the difficulty in computing the homotopy groups of spheres is concentrated in their torsion. It also suggests that, while a direct computation of homotopy groups can be too ambitious an approach, the computation of localizations of homotopy groups can be a more tractable goal.

\subsection{Quillen's approach to rational homotopy theory}\label{section1.1} The computational features of rational homotopy theory extend deeper than the systematic disregard of torsion in homotopy groups. Rational homotopy theory can be translated purely into algebraic terms, and rational computations in geometry and topology can be transformed into more feasible ones in algebra. Quillen's paper on rational homotopy theory \cite{Quillen1969} establishes this advantage of rational homotopy theory, and it informs us about the algebra that fully encodes the rational homotopy theory of $1$-connected spaces. Specifically, Quillen's work turns the study of the rational homotopy theory of $1$-connected spaces into that of rational differential graded Lie algebras and rational differential graded coalgebras up to quasi-isomorphism.

Quillen's approach to rational homotopy theory takes place within his framework of \emph{model categories} from \cite{Quillen1967}. A model structure on a category allows one to introduce tools and constructions from homotopy theory in that category, and thus endows the category with a homotopy theory. Specifically, a model structure on a category comprises a class of morphisms called \emph{weak equivalences} together with two other classes of morphisms, \emph{cofibrations} and \emph{fibrations}, subject to a list of axioms. This structure allows for constructions such as the homotopy category of a model category, which is its localization at the class of weak equivalences. In practice, model categories often satisfy additional properties that render the study of their homotopy theory less abstract and more explicit.

In his paper on rational homotopy theory, Quillen produces a number of model structures, including

\begin{enumerate}

\item one on $1$-connected spaces whose weak equivalences are the rational homotopy equivalences;

\item one on rational differential graded Lie algebras whose weak equivalences are the quasi-isomorphisms; and

\item one on rational differential graded coalgebras whose weak equivalences are also the quasi-isomorphisms,

\end{enumerate}
as well as functorial constructions that induce equivalences of these model categories. The notion of equivalence that we are referring to is, in fact, called a \emph{Quillen equivalence}. Thus, Quillen's work quantifies how rational homotopy theory translates difficult problems in geometry and topology into more feasible problems in algebra. In other words, Quillen uses his framework of model categories to model the rational homotopy theory of $1$-connected spaces with the homotopy theory of rational differential graded Lie algebras and coalgebras.

On the side of $1$-connected spaces, Quillen does not work so much with topological spaces as with \emph{simplicial sets}. Simplicial sets can be thought of as simplicial complexes with degeneracy data that allows us to view lower-dimensional simplices as honorary higher-dimensional simplices, as well as directionality data for all simplices. This point of view illustrates that simplicial sets are a combinatorial model for spaces, and the degeneracy and directionality data render them a more faithful model than that of ordinary simplicial complexes. An exposition of this approach to simplicial sets and their advantages over simplicial complexes is provided by Bergner in \cite{Bergner2022}.

Instead of working directly with $1$-connected topological spaces, Quillen produces a family of model structures on the category of \emph{$1$-reduced simplicial sets}, that is, simplicial sets whose $1$-skeleton is a point. As the category of $1$-connected topological spaces is ill-behaved with respect to important limits and colimits, such as pullbacks, the passage to $1$-reduced simplicial sets is necessary for Quillen's model-categorical approach, which requires a category that has all finite limits and colimits. Thus, Quillen uses $1$-reduced simplicial sets as a combinatorial analog of $1$-connected spaces in analogy with how simplicial sets are a combinatorial model for spaces, except that, now, the more rigid structure of a $1$-reduced simplicial set yields a nicer underlying category to work in than that of $1$-connected spaces.

More specifically, Quillen produces a family of model structures on $1$-reduced simplicial sets that interpolates between ordinary homotopy theory, whose weak equivalences are the weak homotopy equivalences, and rational homotopy theory, whose weak equivalences are the rational homotopy equivalences. This interpolation captures the fact that we have options between discarding no torsion at all, as is the case with weak homotopy equivalences, and discarding all torsion, as is the case with rational homotopy equivalences. For instance, given a prime number $p$, we can specifically preserve $p$-torsion in higher homotopy groups by tensoring them with the $p$-local integers $\mathbb{Z}_{(p)}$, and we can adapt our notion of equivalence to be that of a \emph{$p$-local homotopy equivalence}, which induces isomorphisms of higher homotopy groups only after tensoring with the $p$-local integers. Such a selective approach allows us to study the torsion in homotopy groups one prime at a time rather than study it in one go or discard it altogether.

In full detail, for a multiplicative subset $M$ of $\mathbb{Z}$, Quillen endows the category of $1$-reduced simplicial sets with a model structure in which a map is a weak equivalence if and only if it induces isomorphisms of higher homotopy groups after tensoring with $M^{-1}\mathbb{Z}$; we call such a map an \emph{$M$-local homotopy equivalence}. Tensoring with $M^{-1}\mathbb{Z}$ for different choices of $M$ allows us to preserve specific types of torsion in homotopy groups. For instance, for a prime number $p$, choosing $M=\mathbb{Z} \setminus p\mathbb{Z}$ and tensoring with $\mathbb{Z}_{(p)}$ only keeps $p$-torsion.

In this notation, a space is called \emph{$M$-local} if its higher homotopy groups are $M^{-1}\mathbb{Z}$-modules, that is, uniquely divisible by every positive integer in $M$. Moreover, for a $1$-connected space $X$, Sullivan's geometric construction yields a relative CW complex $(X_{M^{-1}\mathbb{Z}}, X)$ such that $X_{M^{-1}\mathbb{Z}}$ is an $M$-local space and the relative CW inclusion is an $M$-local homotopy equivalence. This construction is called the \emph{$M$-localization} of $X$. Thus, rational homotopy theory can be developed for any localization of $\mathbb{Z}$, though specific computations, such as Serre's computation of the rational homotopy groups of spheres, much depend on the choice of such a localization.

The following table depicts the interpolation given by varying our choice of a multiplicative subset $M$ of $\mathbb{Z}$.

\begin{table}[h]
\begin{tabular}{|c|c|c|c|}
\hline
{\color{blue}{$M$}}                                                                       & $\{1\}$      & $\mathbb{Z} \setminus p\mathbb{Z}$ & $\mathbb{Z} \setminus \{0\}$ \\ \hline
{\color{blue}{$M^{-1}\mathbb{Z}$}}                                                        & $\mathbb{Z}$ & $\mathbb{Z}_{(p)}$                 & $\mathbb{Q}$                 \\ \hline
{\color{blue}{torsion discarded by $- \otimes M^{-1}\mathbb{Z}$}} & none         & all but $p$-torsion                & all                          \\ \hline
{\color{blue}{$M$-local homotopy equivalence}}                                                         & weak         & $p$-local                          & rational                     \\ \hline
\end{tabular}
\label{table1}
\end{table}

\subsection{A modern perspective on Quillen's approach}\label{section1.2} To establish his family of model structures on $1$-reduced simplicial sets, Quillen painstakingly verifies the model category axioms one-by-one. The motivating idea behind our work in this paper is that we can use modern tools to provide a streamlined approach to this family of model structures. The first tool we employ is that of left transfer, as developed by Bayeh, Hess, Karpova, K{\k{e}}dziorek, Riehl, and Shipley in \cite{BHKKRS2015}. This tool allows us to transfer the model structure on pointed simplicial sets to a model structure on $1$-reduced simplicial sets whose weak equivalences are the weak homotopy equivalences. Then, we left Bousfield localize the latter model structure at the $M$-localization maps of spheres to recover the rest of Quillen's family. Although the tool of left Bousfield localization dates back to the work of Bousfield \cite{Bousfield1975, Bousfield1977} and Farjoun \cite{Farjoun1992, Farjoun1995}, we rely on the more recent existence results for left Bousfield localization developed by Hirschhorn in \cite{Hirschhorn2003} and by Barwick in \cite{Barwick2010}.

For the left transfer, we observe that a $1$-reduced simplicial set has a unique $0$-simplex, so there is a forgetful functor from the category of $1$-reduced simplicial sets to the category of pointed simplicial sets. This functor admits a right adjoint given by the \emph{$1$st Eilenberg subcomplex} of a pointed simplicial set, as defined by May in \cite{May1967}; in fact, Quillen uses this construction to give his family of model structures. We use the $1$st Eilenberg subcomplex functor to left-transfer the model structure on pointed simplicial sets to a model structure on $1$-reduced simplicial sets whose weak equivalences are the weak homotopy equivalences and whose fibrant objects are the Kan complexes.

Then, for the left Bousfield localization, we characterize, for a multiplicative subset $M$ of $\mathbb{Z}$, the $M$-local spaces and the $M$-local homotopy equivalences using function complexes. Our characterizations can be viewed as the enriched-in-simplicial-sets version of the work of Casacuberta, Peschke, and Pfenniger in \cite{CPP1992}, which involves hom-sets instead. Using these characterizations, we left Bousfield localize our left-induced model category at the $M$-localization maps of spheres to obtain a model structure on $1$-reduced simplicial sets whose weak equivalences are the $M$-local homotopy equivalences and whose fibrant objects are the $M$-local Kan complexes. Thus, we recover the entirety of Quillen's family of model structures on $1$-reduced simplicial sets.

In particular, we obtain a model structure on the category of $1$-reduced simplicial sets in which the weak equivalences are the rational homotopy equivalences and the fibrant objects are the rational Kan complexes. This model category encapsulates the rational homotopy theory of $1$-connected spaces; for instance, fibrant replacements in this model category are rationalizations of $1$-connected spaces. It is this model category that Quillen establishes to be equivalent to his model structures on rational differential graded Lie algebras and rational differential graded coalgebras.

We also give an alternative approach using Bousfield localization with respect to homology, as introduced by Bousfield in \cite{Bousfield1975}, in conjunction with left transfer. The idea is that, for a multiplicative subset $M$ of $\mathbb{Z}$, rather than first left-transfer the model structure on pointed simplicial sets and then localize, we can try to localize the model structure on pointed simplicial sets with respect to homology with $M^{-1}\mathbb{Z}$-coefficients and then left-transfer this localized model structure. We show that this approach also recovers Quillen's family of model structures on $1$-reduced simplicial sets.

Schematically, for a multiplicative subset $M$ of $\mathbb{Z}$, we denote

\begin{enumerate}

\item the model category of pointed simplicial sets by ${{\mathcal{SS}\mathrm{ets}}_{\ast}}$;

\item the model structure on $1$-reduced simplicial sets with weak homotopy equivalences by ${\mathcal{SS}\mathrm{ets}_{1}}$;

\item the localization of ${{\mathcal{SS}\mathrm{ets}}_{\ast}}$ with respect to homology with $M^{-1}\mathbb{Z}$-coefficients by ${\mathcal{L}_{HM^{-1}\mathbb{Z}}{\mathcal{SS}\mathrm{ets}}_{\ast}}$; and

\item the model structure on $1$-reduced simplicial sets with $M$-local homotopy equivalences by ${\mathcal{L}_{M^{-1}\mathbb{Z}}\mathcal{SS}\mathrm{ets}_{1}}$,

\end{enumerate}
and we show that the diagram of model-categorical operations

\[\begin{tikzcd}[row sep=6mm, column sep=6mm, ampersand replacement=\&]
	{{\mathcal{SS}\mathrm{ets}}_{\ast}} \&\& {\mathcal{SS}\mathrm{ets}_{1}} \\
	\\
	{\mathcal{L}_{HM^{-1}\mathbb{Z}}{\mathcal{SS}\mathrm{ets}}_{\ast}} \&\& {\mathcal{L}_{M^{-1}\mathbb{Z}}\mathcal{SS}\mathrm{ets}_{1}}
	\arrow["{\textrm{localization}}"', squiggly, from=1-1, to=3-1]
	\arrow["{\textrm{transfer}}", squiggly, from=1-1, to=1-3]
	\arrow["{\textrm{localization}}", squiggly, from=1-3, to=3-3]
	\arrow["{\textrm{transfer}}", squiggly, from=3-1, to=3-3]
\end{tikzcd}\]
commutes. As the weak equivalences in ${\mathcal{L}_{HM^{-1}\mathbb{Z}}{\mathcal{SS}\mathrm{ets}}_{\ast}}$ are the $M$-local homology isomorphisms, the commutativity of the diagram stems from the result that a map of $1$-connected spaces is an $M$-local homotopy equivalence if and only if it is an $M$-local homology isomorphism.

Another motivating goal of ours is to show that these model structures on $1$-reduced simplicial sets have properties that one generally desires and expects in the context of model categories. The tools that we invoke guarantee us most of these properties. An exception is the property of being a simplicial model category, as defined by Quillen in \cite{Quillen1967}. We establish this property explicitly by first showing that our left-induced model structure on $1$-reduced simplicial sets is simplicial. As left Bousfield localization preserves the simplicial structure, this structure is bestowed upon the entire family of model structures on $1$-reduced simplicial sets.

\subsection{Generalizing Quillen's family of model structures to all spaces}\label{section1.3} The modern tools that we use to recover Quillen's family of model structures on $1$-reduced simplicial sets allow us to extend it to all spaces. This extension encodes the generalization of rational homotopy theory to non-simply connected spaces, which has been accomplished in two stages. Firstly, rational homotopy theory was extended to nilpotent spaces by Sullivan \cite{Sullivan1974, Sullivan1977, Sullivan1970}; Bousfield and Kan \cite{BousfieldKan1972}; Hilton, Mislin, and Roitberg \cite{HMR1975}; Bousfield and Gugenheim \cite{BousfieldGugenheim1976}; and Neisendorfer \cite{Neisendorfer1978}. Ultimately, it was extended to all spaces by Casacuberta and Peschke \cite{CasacubertaPeschke1993}; G{\'{o}}mez-Tato, Halperin, and Tanr{\'{e}} \cite{GTHT2000}; and Bastardas and Casacuberta \cite{BastardasCasacuberta2001}. At both stages of generalization, rational homotopy theory has been translated to purely algebraic terms, as in the classical case.

We use our modern perspective to produce a family of model structures on the category of spaces that describes the generalization of rational homotopy theory to non-simply connected spaces due to G{\'{o}}mez-Tato, Halperin, and Tanr{\'{e}}. In their approach, one rationalizes a connected space $X$ by applying fiberwise rationalization to the fibration \[\widetilde{X} \rightarrow X \rightarrow K(\pi_{1}(X), 1)\] given by the $1$st Postnikov approximation of $X$, where $K(\pi_{1}(X), 1)$ is an Eilenberg-Mac Lane space. The fiber $\widetilde{X}$ is the universal cover of $X$, which is a $1$-connected space that can be rationalized in the classical sense. This technique generalizes to disconnected spaces by fiberwise rationalizing every component. The tool of fiberwise localization is invoked by Sullivan \cite{Sullivan1974} and investigated by Llerena \cite{Llerena1982, Llerena1985} and Farjoun \cite{Farjoun1995_CI}, and the specific case of fiberwise rationalization is treated in recent work of Ivanov \cite{Ivanov2022}.

In general, for a multiplicative subset $M$ of $\mathbb{Z}$, the fiberwise $M$-localization of a connected space $X$ entails, at the level of total spaces, a map $X \rightarrow X_{M^{-1}\mathbb{Z}}$ that induces isomorphisms on $\pi_{0}$ and $\pi_{1}$ and isomorphisms of higher homotopy groups after tensoring with $M^{-1}\mathbb{Z}$. We call such a map an \emph{$M$-local homotopy equivalence}, for it extends the notion of an $M$-local homotopy equivalence beyond maps of $1$-connected spaces. In \cite{RWZ2021}, Rivera, Wierstra, and Zeinalian give algebraic characterizations for rational homotopy equivalences in this general sense, which they refer to as \emph{$\pi_{1}$-rational homotopy equivalences}. Also, as in the $1$-connected case, a space is called \emph{$M$-local} if its higher homotopy groups are $M^{-1}\mathbb{Z}$-modules; note that no conditions are imposed on $\pi_{0}$ or $\pi_{1}$.

In this terminology, we endow the category of spaces with a model structure whose weak equivalences are the $M$-local homotopy equivalences and whose fibrant objects are the $M$-local spaces. This model category is the left Bousfield localization of the model category of spaces at the $M$-localization maps of spheres. In particular, we get a model structure on the category of spaces in which the weak equivalences are the rational homotopy equivalences and the fibrant objects are the rational spaces. In \cite{GTHT2000}, G{\'{o}}mez-Tato, Halperin, and Tanr{\'{e}} produce an algebraic category with a notion of homotopy between its morphisms whose homotopy category is equivalent to the homotopy category of rational spaces in their extended sense. By our theorem, the latter homotopy category arises from a model category, so it can be studied using the structured toolkit of model categories.

Lastly, we note that our localization of all spaces at rational homotopy does not coincide with Bousfield's localization of spaces at rational homology; the unique map from $\mathbb{R}P^{2}$ to the point is a rational homology isomorphism, but not a rational homotopy equivalence in the extended sense, for it does not induce an isomorphism of fundamental groups. This result captures the fact that rational homotopy and rational homology do not agree for non-simply connected spaces.

\subsection{Organization of the paper}\label{section1.4} Section \ref{section2} comprises background on $1$-reduced simplicial sets. In Section \ref{section3}, we perform the left transfer. In Section \ref{section4}, we investigate the fibrations in our left-induced model category. In Section \ref{section5}, we establish that our left-induced model category is simplicial. Section \ref{section6} consists of auxiliary characterization results from the rational homotopy theory of $1$-connected spaces. We use these results in Section \ref{section7} to perform the left Bousfield localization that recovers Quillen's family of model structures on $1$-reduced simplicial sets. In Section \ref{section8}, we give an alternative approach via Bousfield localization with respect to homology. In Section \ref{section9}, we give an overview of the rational homotopy theory of non-simply connected spaces. Lastly, in Section \ref{section10}, we perform our left Bousfield localization to all spaces to generalize Quillen's family of model structures.

\subsection{Acknowledgments}\label{section1.5} I am grateful to my PhD advisor at the University of Virginia, Julie Bergner, for her guidance, input, feedback, and support that made the work in this paper possible. I would also like to thank Nicholas Kuhn for suggesting the approach of using Bousfield localization with respect to homology, as well as Scott Balchin and Kimball Strong for enlightening conversations about this work.

\section{$1$-reduced simplicial sets}\label{section2}
We begin with some background on $1$-reduced simplicial sets. We first give background on simplicial sets; in particular, we define the $1$-skeleton and the $1$-coskeleton of a simplicial set. Using these notions, we define $1$-reduced simplicial sets. Then, we define the {$1$-reduction} of a simplicial set, as well as the {$1$st Eilenberg subcomplex} of a pointed simplicial set. These constructions provide us with adjunctions between $1$-reduced simplicial sets and (pointed) simplicial sets that we rely on to bridge our investigation of the homotopy theory of $1$-reduced simplicial sets to known results from the homotopy theory of (pointed) simplicial sets.

\subsection{Simplicial sets}\label{section2.1} We collect some background and notation on simplicial sets; a reference is \cite[Chapter I]{GoerssJardine1999}. The combinatorics involved in the definition of a simplicial set can be packaged in the simplex category $\Delta$.

\begin{definition}\label{defsimpcatdef} The \emph{simplex category} $\Delta$ is the category whose objects are the sets $[n]=\{0, 1, \dots, n\}$ for a non-negative integer $n$ and whose morphisms are the order-preserving maps of these sets.
\end{definition}

The order-preserving maps of finite total orders can be built out of special injections and surjections.

\begin{example}\label{defsimpcatdef1} For $0 \leq i \leq n$, two special maps in $\Delta$ are the $i$-th \emph{coface map} $d^{i} \colon [n-1] \rightarrow [n]$, given by \[d^{i}(x)= \begin{cases} x, & x<i; \\
x+1, & x \geq i,
\end{cases}\]
and the $i$-th \emph{codegeneracy map} $s^{i} \colon [n+1] \rightarrow [n]$, given by \[s^{i}(x)= \begin{cases} x, & x \leq i; \\
x-1, & x>i.
\end{cases}\]
These maps generate $\Delta$ subject to a list of identities on their composites.
\end{example}
The simplex category $\Delta$ yields a concise definition of a simplicial set; we write ${\mathcal{S}\mathrm{ets}}$ for the category of sets.

\begin{definition}\label{defssetdef} A \emph{simplicial set} is a functor $X \colon \Delta^{\operatorname{op}} \rightarrow {\mathcal{S}\mathrm{ets}}$. In detail, a simplicial set comprises a \emph{set of $n$-simplices} $X_{n}$ for every non-negative integer $n$, \emph{face maps} $d_{i} \colon X_{n} \rightarrow X_{n-1}$ for $0 \leq i \leq n$, and \emph{degeneracy maps} $s_{i} \colon X_{n} \rightarrow X_{n+1}$ for $0 \leq i \leq n$ subject to a list of identities on how the faces and degeneracies are composed.
\end{definition}
We write ${\mathcal{SS}\mathrm{ets}}$ for the category of simplicial sets and their simplicial maps (natural transformations). We give the fundamental building block examples of simplicial sets, which are reminiscent of simplicial complexes.

\begin{example}\label{defssetdef1} The \emph{standard $n$-simplex} is $\Delta[n]=\operatorname{Hom}_{\Delta}(-, [n]);$ the identity $\operatorname{id}_{[n]}$ gives an $n$-simplex $\sigma_{n}$. The \emph{boundary} $\partial\Delta[n]$ is the smallest subcomplex of $\Delta[n]$ containing $d_{i}(\sigma_{n})$ for $0 \leq i \leq n$. For $n \geq 1$ and $0 \leq k \leq n$, the \emph{horn} $\Lambda[n, k]$ is the smallest subcomplex of $\Delta[n]$ containing $d_{i}(\sigma_{n})$ for $i \neq k$.
\end{example}
As is the case for simplicial complexes, we can geometrically realize a simplicial set $X$ to obtain a CW complex $|X|$. Writing ${\mathcal{T}\mathrm{op}}$ for the category of topological spaces, we denote the geometric realization functor by $|-| \colon {\mathcal{SS}\mathrm{ets}} \rightarrow {\mathcal{T}\mathrm{op}}$. Its right adjoint sends a topological space $Y$ to the {singular complex} $\operatorname{Sing}(Y)$ with $n$-simplices $\operatorname{Sing}(Y)_{n}=\operatorname{Hom}_{\mathcal{T}\mathrm{op}}(|\Delta[n]|, Y)$. As the name suggests, there is a connection between the singular complex of a space and its singular homology; the $n$-th abelian group in the singular chain complex of a space is defined to be the free abelian group on the set of $n$-simplices of the singular complex of that space.

\subsection{The $1$-skeleton and the $1$-coskeleton of a simplicial set}\label{section2.2} To define $1$-reduced simplicial sets, we recall the $1$-skeleton of a simplicial set. We also recall the $1$-coskeleton of a simplicial set, which we refer to in the definition of the $1$st Eilenberg subcomplex of a pointed simplicial set. Our reference is \cite[Chapter 2.6]{Bergner2018}.

Let $\Delta_{1}$ denote the full subcategory of $\Delta$ with objects $[0]$ and $[1]$. The inclusion $\Delta_{1} \rightarrow \Delta$ induces a truncation functor $\operatorname{tr}_{1} \colon {\mathcal{SS}\mathrm{ets}} \rightarrow {\mathcal{S}\mathrm{ets}}^{\Delta_{1}^{\operatorname{op}}}$ that admits both a left adjoint $s_{1} \colon {\mathcal{S}\mathrm{ets}}^{\Delta_{1}^{\operatorname{op}}} \rightarrow {\mathcal{SS}\mathrm{ets}}$ and a right adjoint $c_{1} \colon {\mathcal{S}\mathrm{ets}}^{\Delta_{1}^{\operatorname{op}}} \rightarrow {\mathcal{SS}\mathrm{ets}}$, in terms of which we can define the $1$-skeleton and the $1$-coskeleton of a simplicial set.

\begin{definition}\label{def1skeletondef} Let $X$ be a simplicial set. The \emph{$1$-skeleton} of $X$ is $\operatorname{sk}_{1}(X)=s_{1}\operatorname{tr}_{1}(X)$. The \emph{$1$-coskeleton} of $X$ is $\operatorname{cosk}_{1}(X)=c_{1}\operatorname{tr}_{1}(X)$.
\end{definition}
In detail, both $\operatorname{sk}_{1}(X)$ and $\operatorname{cosk}_{1}(X)$ have set of $0$-simplices $X_{0}$ and set of $1$-simplices $X_{1}$, but they differ in their higher-dimensional simplices. The higher-dimensional simplices of $\operatorname{sk}_{1}(X)$ are the iterated degeneracies of its $1$-simplices. On the contrary, $\operatorname{cosk}_{1}(X)$ has an $n$-simplex for every compatible collection of $(n-1)$-simplices. More precisely, $\operatorname{cosk}_{1}(X)$ has an $n$-simplex for every map $\operatorname{sk}_{1}(\Delta[n]) \rightarrow X$.

\begin{remark}\label{def1skeletondef0} It follows that, for a simplicial set $X$, we have an inclusion map $\operatorname{sk}_{1}(X) \rightarrow X$ and a canonical map $X \rightarrow \operatorname{cosk}_{1}(X)$, where the latter map identifies the simplices of $X$ that share the same $1$-skeleton.
\end{remark}

Besides the above remark, we also need the following example for later use in this section.

\begin{example}\label{def1skeletondef1} Observe that the $1$-skeleton of the standard $2$-simplex $\Delta[2]$ is its boundary $\partial \Delta[2]$.
\end{example}

\subsection{Definition of $1$-reduced simplicial sets}\label{section2.3} We are now ready to introduce $1$-reduced simplicial sets.

\begin{definition}\label{defreduceddef} A simplicial set $X$ is \emph{$1$-reduced} if its $1$-skeleton $\operatorname{sk}_{1}(X)$ is a point. In detail, a simplicial set $X$ is $1$-reduced if it has a unique $0$-simplex whose degeneracy is the unique $1$-simplex of $X$.
\end{definition}

\begin{remark}\label{defreducedrefrmk} Note that the geometric realization of a $1$-reduced simplicial set is a $1$-connected CW complex. It is also crucial to observe that a $1$-reduced simplicial set is unambiguously pointed, for it has a unique $0$-simplex.
\end{remark}
We write ${\mathcal{SS}\mathrm{ets}}_{1}$ for the category of $1$-reduced simplicial sets and simplicial maps between them. The most important example of a $1$-reduced simplicial set for our work is the following.

\begin{example}\label{sothisdoeswork} For $n \geq 2$, the \emph{simplicial $n$-sphere} $S_{\mathrm{simp}}^{n}=\Delta[n]/\partial\Delta[n]$ is a $1$-reduced simplicial set consisting of a unique $0$-simplex and a unique non-degenerate $n$-simplex.
\end{example}
We also need the following lemma, whose proof is straightforward; we write $\wedge$ to denote the smash product.

\begin{lemma}\label{smashingtensor} If $X$ is a $1$-reduced simplicial set and $Z$ is a pointed simplicial set, then $X \wedge Z$ is $1$-reduced.
\end{lemma}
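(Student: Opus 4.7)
The plan is to unpack $X \wedge Z = (X \times Z)/(X \vee Z)$ simplex-by-simplex in low dimensions, using the 1-reducedness of $X$ to observe that both $(X \times Z)_{0}$ and $(X \times Z)_{1}$ consist entirely of simplices lying in the wedge. Collapsing the wedge then leaves a single $0$-simplex and a single $1$-simplex in $X \wedge Z$, which is exactly the condition for $X \wedge Z$ to be $1$-reduced.

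First I would fix the basepoint $x_{0}$ of $X$, which is the unique $0$-simplex of $X$ by 1-reducedness, and the basepoint $z_{0}$ of $Z$. Recalling that the wedge $X \vee Z$ sits inside $X \times Z$ via the two inclusions $x \mapsto (x, s_{0}^{n}(z_{0}))$ and $z \mapsto (s_{0}^{n}(x_{0}), z)$ for an $n$-simplex, I would compute $(X \times Z)_{n}$ for $n = 0, 1$ using $X_{0} = \{x_{0}\}$ and $X_{1} = \{s_{0}(x_{0})\}$. This gives
\[
(X \times Z)_{0} = \{(x_{0}, z) : z \in Z_{0}\}, \qquad (X \times Z)_{1} = \{(s_{0}(x_{0}), \beta) : \beta \in Z_{1}\}.
\]

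Next I would observe that every element of $(X \times Z)_{0}$ is the image of $z \in Z_{0}$ under the wedge inclusion $Z \hookrightarrow X \vee Z \hookrightarrow X \times Z$, and likewise every element of $(X \times Z)_{1}$ is the image of $\beta \in Z_{1}$ under the same inclusion. Hence $(X \vee Z)_{0} = (X \times Z)_{0}$ and $(X \vee Z)_{1} = (X \times Z)_{1}$. After passing to the quotient $X \wedge Z = (X \times Z)/(X \vee Z)$, which collapses all of $X \vee Z$ to the basepoint, it follows that $(X \wedge Z)_{0}$ is a single point and $(X \wedge Z)_{1}$ is also a single point; by the simplicial identities the unique $1$-simplex must be the degeneracy of the unique $0$-simplex. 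Equivalently, $\operatorname{sk}_{1}(X \wedge Z)$ is a point, so $X \wedge Z$ is $1$-reduced.

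There is no real obstacle here; the argument is just a careful bookkeeping of $0$- and $1$-simplices in the product, exploiting the degenerate nature of $X$ in those dimensions. The only subtlety worth flagging is making sure the wedge inclusion from $Z$ hits every such simplex of the product, which is immediate once one writes out that $s_{0}^{n}(x_{0})$ is the only $n$-simplex of $X$ in dimensions $n \leq 1$.
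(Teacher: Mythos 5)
Your argument is correct: since $X_0=\{x_0\}$ and $X_1=\{s_0(x_0)\}$, every $0$- and $1$-simplex of $X\times Z$ lies in the image of $Z$ under the wedge inclusion, so collapsing $X\vee Z$ leaves a unique $0$-simplex whose degeneracy is the unique $1$-simplex. The paper omits the proof as straightforward, and your low-dimensional bookkeeping is exactly the intended argument.
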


\subsection{The $1$-reduction of a simplicial set}\label{section2.4}
We relate the category ${\mathcal{SS}\mathrm{ets}}_{1}$ of $1$-reduced simplicial sets to the category ${\mathcal{SS}\mathrm{ets}}$ of simplicial sets by means of a left adjoint to the forgetful functor $U \colon {\mathcal{SS}\mathrm{ets}}_{1} \rightarrow {\mathcal{SS}\mathrm{ets}}$. This left adjoint quotients out the $1$-skeleton of a simplicial set to yield a $1$-reduced simplicial set. Although quotients can be homotopically ill-behaved, this adjunction aids our model-categorical arguments in this paper, for it lets us translate lifting problems that we wish to solve in ${\mathcal{SS}\mathrm{ets}}_{1}$ into lifting problems that we can solve in ${\mathcal{SS}\mathrm{ets}}$.

\begin{definition}\label{defreductiondef} The \emph{$1$-reduction} of a simplicial set $X$ is the $1$-reduced simplicial set $R_{1}(X)=X/\operatorname{sk}_{1}(X)$.
\end{definition}

The following observation carries useful consequences for later, so we formulate it explicitly.

\begin{example}\label{defreductiondef2} The simplicial $2$-sphere is $S_{\mathrm{simp}}^{2}=R_{1}(\Delta[2])$ because, by Example \ref{def1skeletondef1}, $\operatorname{sk}_{1}(\Delta[2])=\partial\Delta[2]$.
\end{example}
The $1$-reduction of a simplicial set is its largest $1$-reduced quotient in the sense of the following universal property, which is a consequence of the universal property of quotients.

\begin{proposition}\label{unproprreduction} For every map $f \colon X \rightarrow Y$ from a simplicial set $X$ to a $1$-reduced simplicial set $Y$, there exists a unique map $\overline{f} \colon R_{1}(X) \rightarrow Y$ such that the diagram
\[\begin{tikzcd}[row sep=3mm, column sep=3mm, ampersand replacement=\&]
	X \&\& {R_{1}(X)} \\
	\\
	\&\& {Y}
	\arrow[from=1-1, to=1-3]
	\arrow["f"', from=1-1, to=3-3]
	\arrow["{\overline{f}}"', dashed, from=1-3, to=3-3]
\end{tikzcd}\]
commutes, where $X \rightarrow R_{1}(X)$ is the quotient map from $X$ to $R_{1}(X)=X/\operatorname{sk}_{1}(X)$.
\end{proposition}
This universal property translates to the promised adjunction.

\begin{proposition}\label{notquite} The $1$-reduction ${R}_{1} \colon {\mathcal{SS}\mathrm{ets}} \rightarrow {\mathcal{SS}\mathrm{ets}}_{1}$ is left adjoint to the forgetful functor.
\end{proposition}
We invoke Example \ref{defreductiondef2} to note a consequence of this adjunction for later use.

\begin{corollary}\label{notquitesimplices} If $X$ is a $1$-reduced simplicial set, then a map $\Delta[2] \rightarrow X$ is adjoint to a map $S_{\mathrm{simp}}^{2} \rightarrow X$.
\end{corollary}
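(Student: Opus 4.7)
The plan is to invoke the adjunction $R_1 \dashv U$ from Lemma \ref{notquite} together with the computation of the $1$-reduction of $\Delta[2]$ from Example \ref{defreductiondef2}. Specifically, a map $\Delta[2] \rightarrow X$ from the standard $2$-simplex into the $1$-reduced simplicial set $X$ is the same data as a map $\Delta[2] \rightarrow U(X)$ in $\mathcal{SS}\mathrm{ets}$, and by Lemma \ref{notquite} such maps are in natural bijection with maps $R_{1}(\Delta[2]) \rightarrow X$ in $\mathcal{SS}\mathrm{ets}_{1}$. I would then substitute $R_{1}(\Delta[2]) = S^{2}_{\mathrm{simp}}$, which was recorded in Example \ref{defreductiondef2} as a consequence of $\operatorname{sk}_{1}(\Delta[2]) = \partial \Delta[2]$ (Example \ref{def1skeletondef1}) and the defining quotient $R_{1}(\Delta[2]) = \Delta[2]/\operatorname{sk}_{1}(\Delta[2])$.

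So the entire argument is a one-line chain of bijections
\[
\operatorname{Hom}_{\mathcal{SS}\mathrm{ets}}\!\left(\Delta[2], U(X)\right) \;\cong\; \operatorname{Hom}_{\mathcal{SS}\mathrm{ets}_{1}}\!\left(R_{1}(\Delta[2]), X\right) \;=\; \operatorname{Hom}_{\mathcal{SS}\mathrm{ets}_{1}}\!\left(S^{2}_{\mathrm{simp}}, X\right),
\]
and there is nothing further to verify. There is no real obstacle: the only thing to be mildly careful about is keeping straight in which category each hom-set lives (unreduced versus $1$-reduced simplicial sets, with the forgetful $U$ silently applied to $X$ on the left), but this is purely bookkeeping, since $X$ being $1$-reduced is exactly what lets us apply the universal property of Lemma \ref{unproprreduction} (equivalently, the adjunction of Lemma \ref{notquite}).
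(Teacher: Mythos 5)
Your argument is correct and is exactly the paper's proof: the paper also deduces the corollary from the adjunction of Lemma \ref{notquite} combined with the identification $S^{2}_{\mathrm{simp}}=R_{1}(\Delta[2])$ from Example \ref{defreductiondef2}. Your version merely spells out the resulting chain of natural bijections explicitly.
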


\begin{proof} The claim is a consequence of Proposition \ref{notquite} because $S_{\mathrm{simp}}^{2}=R_{1}(\Delta[2])$ by Example \ref{defreductiondef2}.
\end{proof}

\subsection{The $1$st Eilenberg subcomplex of a pointed simplicial set}\label{section2.5}
As we shall see, the approach of taking the $1$-reduction of a simplicial set does not play as nicely as we need it to for our homotopy-theoretic purposes, but we can do better. Because a $1$-reduced simplicial set has a unique $0$-simplex, there is a forgetful functor $F \colon {\mathcal{SS}\mathrm{ets}}_{1} \rightarrow {\mathcal{SS}\mathrm{ets}}_{\ast}$ from the category ${\mathcal{SS}\mathrm{ets}}_{1}$ of $1$-reduced simplicial sets to the category ${\mathcal{SS}\mathrm{ets}}_{\ast}$ of pointed simplicial sets. This functor also has a right adjoint given by the $1$st Eilenberg subcomplex of a pointed simplicial set, which is its largest $1$-reduced subcomplex containing the basepoint.

Note that, in the absence of a basepoint, there is generally no canonical choice of such a subcomplex, so the use of pointed simplicial sets is essential for this construction. In fact, we shall see that the forgetful functor from $1$-reduced simplicial sets to simplicial sets has no right adjoint.

We present the definition of the $1$st Eilenberg subcomplex, though the only feature that we need in this paper is the adjunction that it features in. Recall from Remark \ref{def1skeletondef0} that, for a simplicial set $X$, we have a canonical map $X \rightarrow \operatorname{cosk}_{1}(X)$ to its $1$-coskeleton.

\begin{definition}[{\cite[Definition 8.3]{May1967}}]\label{defeilenbergsubdef} The \emph{$1$st Eilenberg subcomplex} $E_{1}(X)$ of a pointed simplicial set $X$ with basepoint $x_{0}$ is defined by the pullback

\[\begin{tikzcd}[row sep=3mm, column sep=3mm, ampersand replacement=\&]
	{E_{1}(X)} \&\& X \\
	\\
	{\Delta[0]} \&\& {\operatorname{cosk}_{1}(X).}
	\arrow[from=1-1, to=1-3]
	\arrow[from=1-1, to=3-1]
	\arrow[from=1-3, to=3-3]
	\arrow["{x_{0}}", from=3-1, to=3-3]
\end{tikzcd}\]
In detail, $E_{1}(X)$ consists of all simplices $\chi$ of $X$ such that every $0$-dimensional face of $\chi$ is $x_{0}$ and every $1$-dimensional face of $\chi$ is the degeneracy of $x_{0}$.
\end{definition}

It follows that the $1$st Eilenberg subcomplex $E_{1}(X)$ of a pointed simplicial set $X$ is $1$-reduced. The $1$st Eilenberg subcomplex of a pointed simplicial set is its largest $1$-reduced subcomplex in the sense of the following universal property that is straightforward to verify.

\begin{proposition}\label{unproprEilenberg} Every pointed map $f \colon X \rightarrow Y$ from a $1$-reduced simplicial set $X$ to a pointed simplicial set $Y$ maps into the $1$st Eilenberg subcomplex $E_{1}(Y)$ of $Y$. Thus, there is a unique map $\hat{f} \colon X \rightarrow E_{1}(Y)$ such that the diagram
\[\begin{tikzcd}[row sep=3mm, column sep=3mm, ampersand replacement=\&]
	X \&\& {E_{1}(Y)} \\
	\\
	\&\& {Y}
	\arrow["{\hat{f}}", dashed, from=1-1, to=1-3]
	\arrow["f"', from=1-1, to=3-3]
	\arrow[from=1-3, to=3-3]
\end{tikzcd}\]
commutes, where $E_{1}(Y) \rightarrow Y$ is the inclusion map.
\end{proposition}

This universal property amounts to the promised adjunction, which is in part (2) of the following lemma.

\begin{proposition}\label{yesquite} Let $F \colon {\mathcal{SS}\mathrm{ets}}_{1} \rightarrow {\mathcal{SS}\mathrm{ets}}_{\ast}$ denote the forgetful functor.

\begin{enumerate}

\item The $1$-reduction functor ${R}_{1} \colon {\mathcal{SS}\mathrm{ets}_{\ast}} \rightarrow {\mathcal{SS}\mathrm{ets}}_{1}$ is left adjoint to $F$.

\item The $1$st Eilenberg subcomplex functor $E_{1} \colon {\mathcal{SS}\mathrm{ets}_{\ast}} \rightarrow {\mathcal{SS}\mathrm{ets}}_{1}$ is right adjoint to $F$.
\end{enumerate}

\end{proposition}
Proposition \ref{yesquite} implies that small limits and colimits in ${\mathcal{SS}\mathrm{ets}}_{1}$ are constructed as in ${\mathcal{SS}\mathrm{ets}}_{\ast}$. Given a diagram $L \colon \mathcal{D} \rightarrow {\mathcal{SS}\mathrm{ets}}_{1}$ in $1$-reduced simplicial sets, we form its (co)limit by post-composing $L$ with the forgetful functor $F \colon {\mathcal{SS}\mathrm{ets}}_{1} \rightarrow {\mathcal{SS}\mathrm{ets}}_{\ast}$ and forming the (co)limit of the diagram $FL \colon \mathcal{D} \rightarrow {\mathcal{SS}\mathrm{ets}}_{\ast}$ in pointed simplicial sets. In particular, coproducts in ${\mathcal{SS}\mathrm{ets}}_{1}$ are given by wedge sums instead of disjoint unions, for the latter do not yield a $1$-reduced simplicial set. More fundamentally, the initial object of ${\mathcal{SS}\mathrm{ets}}_{1}$ is the point $\Delta[0]$; the empty set $\emptyset$ is not $1$-reduced. Thus, the forgetful functor $U \colon {\mathcal{SS}\mathrm{ets}}_{1} \rightarrow {\mathcal{SS}\mathrm{ets}}$ does not preserve the initial object or coproducts, so it cannot admit a right adjoint.

\section{The left transfer to $1$-reduced simplicial sets}\label{section3}
We recover the first model structure on $1$-reduced simplicial sets from Quillen's family, which we then localize to recover the rest of the family. Namely, we left-transfer the model structure on the category ${\mathcal{SS}\mathrm{ets}}_{\ast}$ of pointed simplicial sets along the adjunction \[F \colon {\mathcal{SS}\mathrm{ets}}_{1} \rightleftarrows {\mathcal{SS}\mathrm{ets}}_{\ast} \colon E_{1}\] to a model structure on the category ${\mathcal{SS}\mathrm{ets}}_{1}$ of $1$-reduced simplicial sets whose weak equivalences are the weak homotopy equivalences. (Our notation for adjunctions is amenable to a helpful mnemonic that distinguishes a left transfer from a right transfer; we propagate the existence of a model structure from right \emph{to left}, so it is a \emph{left} transfer.) This left transfer is an application of \cite[Theorem 2.23]{BHKKRS2015}, which we recall after going over the background on model category theory that we rely on in this paper.

\subsection{Model categories}\label{section3.1} We collect some background on model categories from \cite[\S 3]{DwyerSpalinski1995} and \cite[Chapters 7, 11, and 13]{Hirschhorn2003}. We first recall a concept in the homotopy theory of simplicial sets from \cite[Chapter I]{GoerssJardine1999}.
\begin{definition}\label{defkanfibrdef} A \emph{Kan fibration} is a simplicial map $p \colon X \rightarrow Y$ that has the right lifting property with respect to the set of all horn inclusions
$J=\{j_{n, k} \colon \Lambda[n, k] \rightarrow \Delta[n] \mid n \geq 1 \textrm{ and } 0 \leq k \leq n\}$. In other words, a simplicial map $p \colon X \rightarrow Y$ is a Kan fibration if every lifting problem of the form
\[\begin{tikzcd}[row sep=3mm, column sep=3mm, ampersand replacement=\&]
	{\Lambda[n, k]} \&\& X \\
	\\
	{\Delta[n]} \&\& Y
	\arrow[from=1-1, to=1-3]
	\arrow["{j_{n, k}}"', from=1-1, to=3-1]
	\arrow["p"', from=1-3, to=3-3]
	\arrow[dashed, from=3-1, to=1-3]
	\arrow[from=3-1, to=3-3]
\end{tikzcd}\]
can be solved. A \emph{Kan complex} is a simplicial set $X$ such that the unique map $X \rightarrow \Delta[0]$ is a Kan fibration.
\end{definition}

\begin{example}[{\cite[Chapter I, Lemma 3.3]{GoerssJardine1999}}]\label{defkanfibrdef2} If $Y$ is a space, then $\operatorname{Sing}(Y)$ is a Kan complex.
\end{example}

Example \ref{defkanfibrdef2} follows from the geometric realization-singular complex adjunction combined with the fact that, at the level of topological spaces, geometric realizations of horns can always be filled because they are retracts of geometric realizations of standard simplices. However, the directionality of simplices in a simplicial set precludes the existence of the analogous retraction at the level of simplicial sets, so the concepts of a Kan fibration and a Kan complex are not moot.

The homotopy theory of simplicial sets can be studied systematically in the framework of model categories, which is the framework that we adhere to throughout this paper.

\begin{definition}\label{defmodelcatdef} A \emph{model category} comprises a category $\mathcal{M}$ that has all finite limits and colimits, as well as three distinguished classes of morphisms in $\mathcal{M}$: a class of \emph{weak equivalences}, a class of \emph{cofibrations}, and a class of \emph{fibrations}. A weak equivalence that is also a cofibration is called an \emph{acyclic cofibration}. A weak equivalence that is also a fibration is called an \emph{acyclic fibration}. This data satisfies the axioms in \cite[Definition 3.3]{DwyerSpalinski1995}.
\end{definition}

\begin{remark}\label{defmodelcatdefrmk2} In a model category, the weak equivalences and the cofibrations together determine the fibrations.
\end{remark}

To recover Quillen's family of model structures on $1$-reduced simplicial sets, we rely on the standard model structure on (pointed) simplicial sets.

\begin{example}[{\cite[Theorems 7.10.12 and 7.10.13]{Hirschhorn2003}}]\label{defmodelcatdef1} The category ${\mathcal{SS}\mathrm{ets}}$ of simplicial sets has a model structure in which a map $f$ is

\begin{enumerate}

\item a weak equivalence if and only if $|f|$ is a weak homotopy equivalence;

\item a cofibration if and only if $f$ is a monomorphism; and

\item a fibration if and only if $f$ is a Kan fibration.

\end{enumerate}
The category ${\mathcal{SS}\mathrm{ets}}_{\ast}$ of pointed simplicial sets also has a model structure with the same description.

\end{example}

The following definition is intended to introduce some terminology for the rest of this paper.

\begin{definition}\label{defmodelcatdefrmk1} Let $\mathcal{M}$ be a model category with initial object $\emptyset$ and terminal object $\ast$. An object $X$ of $\mathcal{M}$ is \emph{cofibrant} if the morphism $\emptyset \rightarrow X$ is a cofibration, and $X$ is \emph{fibrant} if the morphism $X \rightarrow \ast$ is a fibration. Then, for an object $Y$ of $\mathcal{M}$, a \emph{cofibrant replacement} of $Y$ is a weak equivalence $A \rightarrow Y$ with cofibrant source $A$, and a \emph{fibrant replacement} of $Y$ is a weak equivalence $Y \rightarrow B$ with fibrant target $B$.
\end{definition}
The model categories ${\mathcal{SS}\mathrm{ets}}$ and ${\mathcal{SS}\mathrm{ets}}_{\ast}$ have the property of being cofibrantly generated. In practice, this property facilitates the detection of (acyclic) fibrations and fibrant objects in a model category.

\begin{definition}[{\cite[Definition 11.1.2]{Hirschhorn2003}}]\label{defcofgenmcdef} A model category $\mathcal{M}$ is \emph{cofibrantly generated} if there exist

\begin{enumerate}

\item a set $I$ of morphisms in $\mathcal{M}$, called a \emph{generating set of cofibrations} for $\mathcal{M}$; and

\item a set $J$ of morphisms in $\mathcal{M}$, called a \emph{generating set of acyclic cofibrations} for $\mathcal{M}$,
\end{enumerate}
such that

\begin{enumerate}

\item both $I$ and $J$ permit the small object argument \cite[Definition 10.5.15]{Hirschhorn2003};

\item a morphism $p$ is an acyclic fibration if and only if $p$ has the right lifting property with respect to $I$; and

\item a morphism $p$ is a fibration if and only if $p$ has the right lifting property with respect to $J$.
\end{enumerate}

\end{definition}

\begin{example}[{\cite[Examples 11.1.6 and 11.1.7]{Hirschhorn2003}}]\label{defcofgenmcdef1} Both ${\mathcal{SS}\mathrm{ets}}$ and ${\mathcal{SS}\mathrm{ets}}_{\ast}$ are cofibrantly generated.

\begin{enumerate}

\item A generating set of cofibrations for ${\mathcal{SS}\mathrm{ets}}$ is the set of all boundary inclusions \[I=\{i_{n} \colon \partial\Delta[n] \rightarrow \Delta[n] \mid n \geq 0\}.\]

\item A generating set of acyclic cofibrations for ${\mathcal{SS}\mathrm{ets}}$ is the set of all horn inclusions \[J=\{j_{n, k} \colon \Lambda[n, k] \rightarrow \Delta[n] \mid n \geq 1 \textrm{ and } 0 \leq k \leq n\}.\]

\end{enumerate}
Attaching a disjoint basepoint to the maps in both sets yields the generating (acyclic) cofibrations of ${\mathcal{SS}\mathrm{ets}}_{\ast}$.

\end{example}
What is more, ${\mathcal{SS}\mathrm{ets}}$ and ${\mathcal{SS}\mathrm{ets}}_{\ast}$ belong in the following class of well-behaved model categories, which is originally due to J.H. Smith and is particularly expedient for our purposes of left Bousfield localization.

\begin{definition}[{\cite[Definition 2.1]{Dugger2001}}]\label{defcombmodelcatdef} A model category $\mathcal{M}$ is \emph{combinatorial} if $\mathcal{M}$ is cofibrantly generated and its underlying category is locally presentable \cite[Definition 1.17]{AdamekRosicky1994}.
\end{definition}

In a combinatorial model category, not only do we have a set of building blocks for its (acyclic) cofibrations by virtue of cofibrant generation, but we also have a set of building blocks for its objects by virtue of local presentability. Though the category of topological spaces is not locally presentable \cite[Example 1.18(5)]{AdamekRosicky1994}, the category of simplicial sets does not run into this problem as every simplicial set is built out of standard simplices.

\begin{example}[{\cite[Example 1.12]{AdamekRosicky1994}}]\label{defcombmodelcatdef1} Both ${\mathcal{SS}\mathrm{ets}}$ and ${\mathcal{SS}\mathrm{ets}}_{\ast}$ are locally presentable, thus combinatorial.
\end{example}
Also, ${\mathcal{SS}\mathrm{ets}}$ and ${\mathcal{SS}\mathrm{ets}}_{\ast}$ have the following model-categorical property that permits left Bousfield localization.

\begin{definition}[{\cite[Definition 13.1.1(1)]{Hirschhorn2003}}]\label{deflpropmodelcatdef} A model category $\mathcal{M}$ is \emph{left proper} if every pushout of a weak equivalence along a cofibration is a weak equivalence.
\end{definition}
The following result guarantees the left properness of almost every model category that appears in this paper, including every model structure in Quillen's family of model structures on $1$-reduced simplicial sets.

\begin{proposition}[{\cite[Corollary 13.1.3(1)]{Hirschhorn2003}}]\label{leftpropernoworries} If every object of a model category $\mathcal{M}$ is cofibrant, then $\mathcal{M}$ is left proper.
\end{proposition}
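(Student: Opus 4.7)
Setting up the pushout square
\[\begin{tikzcd}
A \arrow[r, "f"] \arrow[d, "i"'] & B \arrow[d] \\
C \arrow[r, "\tilde{f}"'] & D
\end{tikzcd}\]
with $i$ a cofibration and $f$ a weak equivalence, the plan is to prove that $\tilde{f} \colon C \to D$, the cobase change of $f$ along $i$, is a weak equivalence. First, I would factor $f = q \circ j$ via the factorization axiom, with $j \colon A \to A'$ a trivial cofibration and $q \colon A' \to B$ a fibration. Since $f$ and $j$ are weak equivalences, two-out-of-three forces $q$ to also be a weak equivalence, hence a trivial fibration. The original pushout then decomposes horizontally as the composite of two pushout squares
\[\begin{tikzcd}
A \arrow[r, "j"] \arrow[d, "i"'] & A' \arrow[r, "q"] \arrow[d, "i'"'] & B \arrow[d] \\
C \arrow[r, "j'"'] & C' \arrow[r, "q'"'] & D
\end{tikzcd}\]
with $\tilde{f} = q' \circ j'$, so by two-out-of-three it suffices to show that $j'$ and $q'$ are each weak equivalences.

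The first is routine: trivial cofibrations are closed under cobase change by the model category axioms, so $j'$ is a trivial cofibration and in particular a weak equivalence.

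The main obstacle is showing that $q'$, the cobase change of the trivial fibration $q$ along the cofibration $i'$, is a weak equivalence. Here the hypothesis that every object is cofibrant is essential. On the one hand, since $B$ is cofibrant, $\emptyset \to B$ is a cofibration, so by lifting against the trivial fibration $q$ one obtains a section $s \colon B \to A'$, which is a weak equivalence by two-out-of-three. On the other hand, the blanket cofibrancy hypothesis lets us invoke the gluing lemma, a standard result in model category theory asserting that a levelwise weak equivalence of spans whose right legs are cofibrations and whose objects are all cofibrant induces a weak equivalence on pushouts; this is essentially the pushout formulation of K.\ Brown's lemma. Applied to the map of spans
\[\begin{tikzcd}[column sep=small]
A' \arrow[d, "q"'] & A' \arrow[l, equal] \arrow[r, "i'"] \arrow[d, equal] & C' \arrow[d, equal] \\
B & A' \arrow[l, "q"'] \arrow[r, "i'"'] & C'
\end{tikzcd}\]
whose vertical components $q, \mathrm{id}_{A'}, \mathrm{id}_{C'}$ are all weak equivalences, the top span has pushout $C'$ and the bottom has pushout $D$, and the induced map between them is exactly $q'$. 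The gluing lemma then gives that $q'$ is a weak equivalence, and combining with $j'$, two-out-of-three yields that $\tilde{f}$ is a weak equivalence, as desired.
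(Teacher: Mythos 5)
The paper offers no proof of its own here; it simply cites Hirschhorn's Corollary 13.1.3(1). Your argument is correct and reconstructs the standard line that sits behind that citation (compare Hirschhorn's Proposition 13.1.2 and Hovey's cube lemma, Lemma 5.2.6). The strategy---factor $f$ as a trivial cofibration $j$ followed by a trivial fibration $q$, note the cobase change $j'$ of $j$ is still a trivial cofibration, and reduce the cobase change $q'$ of $q$ to the gluing lemma---is exactly the usual one. The point where care is needed is your appeal to the gluing lemma: some formulations of it (e.g., Hirschhorn 13.5.3) assume left properness and would be circular here, but the version you invoke, requiring all objects cofibrant and the \emph{same} leg of both spans to be a cofibration, holds in an arbitrary model category. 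It is proved by putting a Reedy model structure on span-shaped diagrams in which a span $B \leftarrow A \to C$ is Reedy cofibrant precisely when $A$ and $B$ are cofibrant and $A \to C$ is a cofibration, observing that the colimit functor is left Quillen, and then applying Ken Brown's lemma; so your parenthetical ``pushout formulation of K.~Brown's lemma'' is accurate in spirit. Your two spans satisfy those hypotheses: $i'$ is a cofibration because it is a cobase change of the cofibration $i$, and all objects involved are cofibrant by the blanket hypothesis. The one dangling item is the section $s\colon B \to A'$ you construct by lifting $\emptyset \to B$ against $q$: it is never used afterward and can simply be deleted without affecting the proof.
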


In practice, it is common and convenient for the cofibrations in a model category to be the monomorphisms, as is the theme in this paper. Then, every object is cofibrant, so we need not worry about cofibrant replacement, and we can use Proposition \ref{leftpropernoworries} to extract left properness for free.

\subsection{The left transfer}\label{section3.2}
Before we do the promised left transfer, we explain why we cannot use the $1$-reduction functor $R_{1}$ to right-induce, in the sense of \cite[Theorem 11.3.2]{Hirschhorn2003}, a model structure on ${\mathcal{SS}\mathrm{ets}}_{1}$ along the adjunction \[{R}_{1} \colon {\mathcal{SS}\mathrm{ets}} \rightleftarrows {\mathcal{SS}\mathrm{ets}}_{1} \colon U\] from Proposition \ref{notquite}. We write $J$ for the set of all horn inclusions in ${\mathcal{SS}\mathrm{ets}}$. If such a right transfer were possible, then, in the resulting right-induced model structure on ${\mathcal{SS}\mathrm{ets}}_{1}$,

\begin{enumerate}

\item the class of weak equivalences would be the class of weak homotopy equivalences; and

\item the set $R_{1}(J)$ would be a generating set of acyclic cofibrations.

\end{enumerate}
In particular, the set $|R_{1}(J)|$ would comprise weak homotopy equivalences, which is false by part (2) of the following lemma; conceptually, we find an instance of the $1$-reduction quotient being homotopically ill-behaved.

\begin{lemma}\label{sadcarefree} For $n \geq 1$ and $0 \leq k \leq n$, let $j_{n, k} \colon \Lambda[n, k] \rightarrow \Delta[n]$ denote the horn inclusion in ${\mathcal{SS}\mathrm{ets}}$.

\begin{enumerate}

\item If $n=1$, then ${R}_{1}(j_{1, k})$ is the identity map of the point.

\item If $n=2$, then ${R}_{1}(j_{2, k}) \colon \Delta[0] \rightarrow S_{\mathrm{simp}}^{2}$ picks out the unique $0$-simplex of the simplicial $2$-sphere.

\item If $n \geq 3$, then $|{R}_{1}(j_{n, k})|$ is the inclusion of a strong deformation retract in topological spaces.

\end{enumerate}
Thus, $|{R}_{1}(j_{n, k})|$ is a weak homotopy equivalence if and only if $n \neq 2$.

\end{lemma}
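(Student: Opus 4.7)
The plan is to compute $R_{1}$ directly on the source and target of each horn inclusion, treating the three regimes $n=1$, $n=2$, and $n\geq 3$ separately, and then to read off the weak-equivalence question from those computations. Throughout, the key observation is that $R_{1}(X) = X/\operatorname{sk}_{1}(X)$, so as soon as one understands the $1$-skeleta of $\Lambda[n,k]$ and $\Delta[n]$, the quotients are determined.

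\emph{Cases $n=1$ and $n=2$.} When $n=1$, both $\Lambda[1,k]$ (a single vertex) and $\Delta[1]$ are $1$-dimensional, so each coincides with its own $1$-skeleton and $R_{1}$ collapses them to $\Delta[0]$; the induced map is the identity, which is trivially a weak equivalence. When $n=2$, the horn $\Lambda[2,k]$ is again $1$-dimensional, so $R_{1}(\Lambda[2,k])=\Delta[0]$; and by Example \ref{def1skeletondef1}, $\operatorname{sk}_{1}(\Delta[2])=\partial\Delta[2]$, so $R_{1}(\Delta[2])=\Delta[2]/\partial\Delta[2]=S_{\mathrm{simp}}^{2}$ by definition of the simplicial $2$-sphere. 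Thus $R_{1}(j_{2,k})$ picks out the unique $0$-simplex of $S_{\mathrm{simp}}^{2}$, and this is not a weak equivalence since $\pi_{2}(|S_{\mathrm{simp}}^{2}|)=\pi_{2}(S^{2})=\mathbb{Z}$.

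\emph{Case $n\geq 3$.} The main point is the combinatorial identity $\operatorname{sk}_{1}(\Lambda[n,k])=\operatorname{sk}_{1}(\Delta[n])$. An edge $\{a,b\}$ of $\Delta[n]$ lies inside the face $d^{j}$ whenever $j\notin\{a,b\}$, so it is in $\Lambda[n,k]$ as soon as some such $j$ is different from $k$; this is guaranteed since $|\{0,1,\dots,n\}\setminus\{k,a,b\}|\geq n-2\geq 1$. Given this equality, $R_{1}(j_{n,k})$ is the map of quotients of $\Lambda[n,k]$ and $\Delta[n]$ by the \emph{same} subcomplex. I would then invoke the standard strong deformation retraction $H\colon|\Delta[n]|\times I\to|\Delta[n]|$ onto $|\Lambda[n,k]|$, which fixes $|\Lambda[n,k]|$ at all times and therefore fixes $|\operatorname{sk}_{1}(\Delta[n])|\subseteq|\Lambda[n,k]|$ at all times. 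Consequently $H$ descends through the quotient by $|\operatorname{sk}_{1}(\Delta[n])|$ to a strong deformation retraction of $|R_{1}(\Delta[n])|$ onto $|R_{1}(\Lambda[n,k])|$, so $|R_{1}(j_{n,k})|$ is a homotopy equivalence and in particular a weak homotopy equivalence.

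\emph{Conclusion and obstacles.} Combining the three cases shows that $R_{1}(j_{n,k})$ is a weak homotopy equivalence exactly when $n\neq 2$. I expect the only mildly technical step to be the combinatorial verification that $\operatorname{sk}_{1}(\Lambda[n,k])=\operatorname{sk}_{1}(\Delta[n])$ for $n\geq 3$ — once that is in hand, the descent of the classical horn retraction to the $1$-reduction is automatic, and the failure at $n=2$ is forced by the simple fact that the simplicial $2$-sphere is not contractible.
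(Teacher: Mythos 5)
Your proof is correct and follows essentially the same route as the paper: the cases $n=1,2$ by direct computation of the quotients, and for $n\geq 3$ the identity $\operatorname{sk}_{1}(\Lambda[n,k])=\operatorname{sk}_{1}(\Delta[n])$ together with descent of the standard strong deformation retraction to the $1$-reductions. You merely spell out the combinatorial verification and the descent argument that the paper's proof leaves as immediate.
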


\begin{proof} Parts (1) and (2) are straightforward. For (3), if $n \geq 3$, then $\operatorname{sk}_{1}(\Lambda[n, k])=\operatorname{sk}_{1}(\Delta[n])$, so the inclusion of a strong deformation retract $|j_{n, k}|$ descends to the inclusion of a strong deformation retract $|{R}_{1}(j_{n, k})|$.
\end{proof}
On that note, we also record a computation of a similar nature for future reference in this section.

\begin{lemma}\label{notallislost} If a simplicial map $f \colon X \rightarrow Y$ is a monomorphism, then so is its $1$-reduction ${R}_{1}(f)$.
\end{lemma}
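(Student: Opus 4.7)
The plan is to unwind the claim in terms of simplices. Since a simplicial map is a monomorphism if and only if it is injective in each dimension, I would aim to show that $(R_1(f))_n$ is injective on $n$-simplices for every $n$. Recall that $R_1(X)_n = X_n / \operatorname{sk}_1(X)_n$ is obtained from $X_n$ by collapsing $\operatorname{sk}_1(X)_n$ to a single (degenerate) simplex, and similarly for $Y$. The induced map sends a class $[x]$ to $[f_n(x)]$. For $n \in \{0, 1\}$ both sides are a point, so there is nothing to check; for $n \geq 2$, an easy case analysis shows that $(R_1(f))_n$ fails to be injective exactly when there exists some $x \in X_n \setminus \operatorname{sk}_1(X)_n$ with $f_n(x) \in \operatorname{sk}_1(Y)_n$. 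Thus the entire problem reduces to proving the set-theoretic identity
\[
f_n^{-1}\bigl(\operatorname{sk}_1(Y)_n\bigr) \;=\; \operatorname{sk}_1(X)_n.
\]

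The inclusion $f_n(\operatorname{sk}_1(X)_n) \subseteq \operatorname{sk}_1(Y)_n$ is immediate from the functoriality of $\operatorname{sk}_1$. For the reverse inclusion, I would invoke the Eilenberg--Zilber lemma, which asserts that every simplex admits a unique decomposition as an iterated degeneracy of a non-degenerate simplex. Writing $x = s(\tau)$ with $\tau \in X_k$ non-degenerate, naturality of the degeneracies gives $f_n(x) = s(f_k(\tau))$. If one can show that $f_k(\tau)$ remains non-degenerate in $Y_k$, then the uniqueness half of Eilenberg--Zilber forces $f_n(x)$ to have non-degenerate essence of dimension exactly $k$; since $f_n(x) \in \operatorname{sk}_1(Y)_n$, this forces $k \leq 1$, and hence $x = s(\tau) \in \operatorname{sk}_1(X)_n$.

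The main obstacle, therefore, is the sub-lemma that a monomorphism preserves non-degenerate simplices. This is standard and short, but it is where the injectivity hypothesis is actually used. Suppose towards contradiction that $\tau \in X_k$ is non-degenerate but $f_k(\tau) = s_i(\eta)$ for some $\eta \in Y_{k-1}$ and some $0 \leq i \leq k-1$. Applying $d_i$ and then $s_i$ to $f_k(\tau)$, and invoking the simplicial identity $d_i s_i = \operatorname{id}$ together with the naturality of faces and degeneracies, yields
\[
f_k(\tau) \;=\; s_i d_i f_k(\tau) \;=\; f_k(s_i d_i \tau).
\]
By the injectivity of $f_k$, this gives $\tau = s_i d_i \tau$, contradicting the non-degeneracy of $\tau$. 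Combining this sub-lemma with the Eilenberg--Zilber argument above closes the reverse inclusion, establishes the desired set equality, and thereby proves that $R_1(f)$ is a monomorphism.
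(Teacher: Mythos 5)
Your proof is correct. The paper records this lemma without proof (it is treated as a straightforward computation), so there is no argument to compare against; your reduction to the levelwise statement $f_n^{-1}\bigl(\operatorname{sk}_1(Y)_n\bigr)=\operatorname{sk}_1(X)_n$, settled via the Eilenberg--Zilber decomposition together with the standard sub-lemma that a levelwise injective simplicial map preserves non-degenerate simplices, is exactly the expected argument and is complete.
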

Instead, we left-induce a model structure on ${\mathcal{SS}\mathrm{ets}}_{1}$ by transferring that on ${\mathcal{SS}\mathrm{ets}}_{\ast}$ along the adjunction \[F \colon {\mathcal{SS}\mathrm{ets}}_{1} \rightleftarrows {\mathcal{SS}\mathrm{ets}}_{\ast} \colon E_{1}\] from Proposition \ref{yesquite}(2). To do so, we apply the following general result on left-induced model structures.

\begin{theorem}[{\cite[Theorem 2.23]{BHKKRS2015}}]\label{leftinducetheoremwit} Let $F \colon \mathcal{C} \rightleftarrows \mathcal{M} \colon G$ be an adjunction, where $\mathcal{C}$ is a locally presentable category and $\mathcal{M}$ is a combinatorial model category with class of cofibrations $Q$. Suppose that, if a map $g$ in $\mathcal{C}$ has the right lifting property with respect to the class $F^{-1}(Q)$, then $F(g)$ is a weak equivalence in $\mathcal{M}$. Then, there is a combinatorial model structure on $\mathcal{C}$, called the \emph{left-induced model structure}, in which a map $f$ is

\begin{enumerate}

\item a weak equivalence if and only if $F(f)$ is a weak equivalence in $\mathcal{M}$; and

\item a cofibration if and only if $F(f)$ is a cofibration in $\mathcal{M}$.

\end{enumerate}
\end{theorem}
As ${\mathcal{SS}\mathrm{ets}}_{1}$ is a locally presentable category \cite[Example 1.12]{AdamekRosicky1994} and ${\mathcal{SS}\mathrm{ets}}_{\ast}$ is a combinatorial model category by Example \ref{defcombmodelcatdef1}, it remains to verify that our adjunction satisfies the homotopical condition in Theorem \ref{leftinducetheoremwit}.

\begin{lemma}\label{verifybhkkrs} Let $f \colon X \rightarrow Y$ be a map of $1$-reduced simplicial sets. If $f$ has the right lifting property with respect to every monomorphism of $1$-reduced simplicial sets, then $|f|$ is a weak homotopy equivalence.
\end{lemma}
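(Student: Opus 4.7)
The plan is to show the stronger statement that $F(f)$ is an acyclic fibration in ${\mathcal{SS}\mathrm{ets}}_{\ast}$, which immediately implies that $f$ is a weak homotopy equivalence (weak homotopy equivalence is a property of the underlying simplicial map, and in ${\mathcal{SS}\mathrm{ets}}_{\ast}$ acyclic fibrations are in particular weak equivalences). Since the pointed model structure of Example \ref{defmodelcatdef1} has cofibrations equal to monomorphisms, it suffices to verify that $F(f)$ has the right lifting property against every monomorphism $i \colon A \rightarrow B$ in ${\mathcal{SS}\mathrm{ets}}_{\ast}$.

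The key mechanism is the adjunction $R_{1} \dashv F$ from Lemma \ref{yesquite}(1), which lets us convert pointed lifting problems against $f$ into $1$-reduced lifting problems against $f$. Concretely, given a commutative square
\[\begin{tikzcd}[row sep=3mm, column sep=3mm, ampersand replacement=\&]
	A \&\& {F(X)} \\
	\\
	B \&\& {F(Y)}
	\arrow["g", from=1-1, to=1-3]
	\arrow["i"', from=1-1, to=3-1]
	\arrow["{F(f)}", from=1-3, to=3-3]
	\arrow["h"', from=3-1, to=3-3]
\end{tikzcd}\]
in ${\mathcal{SS}\mathrm{ets}}_{\ast}$ with $i$ a monomorphism, I would pass to adjoints to obtain a commutative square
\[\begin{tikzcd}[row sep=3mm, column sep=3mm, ampersand replacement=\&]
	{R_{1}(A)} \&\& {X} \\
	\\
	{R_{1}(B)} \&\& {Y}
	\arrow["{\overline{g}}", from=1-1, to=1-3]
	\arrow["{R_{1}(i)}"', from=1-1, to=3-1]
	\arrow["f", from=1-3, to=3-3]
	\arrow["{\overline{h}}"', from=3-1, to=3-3]
\end{tikzcd}\]
in ${\mathcal{SS}\mathrm{ets}}_{1}$. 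By Lemma \ref{notallislost}, the $1$-reduction $R_{1}(i)$ is a monomorphism of $1$-reduced simplicial sets, so the hypothesis on $f$ supplies a lift $R_{1}(B) \rightarrow X$. Taking the adjoint of this lift back across $R_{1} \dashv F$ yields a lift $B \rightarrow F(X)$ in the original square, because the adjunction is natural and the adjoint correspondence carries commuting triangles to commuting triangles.

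Consequently $F(f)$ has the right lifting property against every monomorphism in ${\mathcal{SS}\mathrm{ets}}_{\ast}$, hence is an acyclic fibration in the pointed model structure. In particular $F(f)$ is a weak homotopy equivalence, and so is $f$. The only nontrivial input is Lemma \ref{notallislost} (that $R_{1}$ preserves monomorphisms); the rest is a formal application of the adjunction $R_{1} \dashv F$ combined with the characterization of acyclic fibrations in ${\mathcal{SS}\mathrm{ets}}_{\ast}$ by the right lifting property against all cofibrations, which holds because ${\mathcal{SS}\mathrm{ets}}_{\ast}$ is cofibrantly generated with generating cofibrations the pointed boundary inclusions.
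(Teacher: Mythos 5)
Your argument is correct and is essentially the paper's proof: both transfer the lifting problem across the $1$-reduction adjunction, invoke Lemma \ref{notallislost} to see that $R_{1}$ of a monomorphism is a monomorphism of $1$-reduced simplicial sets, and conclude that $f$ is an acyclic fibration in the ambient simplicial set model structure, hence a weak homotopy equivalence. The only (immaterial) differences are that the paper works with the unpointed adjunction $R_{1} \dashv U$ of Lemma \ref{notquite} and tests only against the generating boundary inclusions $i_{n}$, whereas you work with $R_{1} \dashv F$ in ${\mathcal{SS}\mathrm{ets}}_{\ast}$ and against all pointed monomorphisms.
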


\begin{proof} We show that $f$ is an acyclic fibration in ${\mathcal{SS}\mathrm{ets}}$. For $n \geq 0$, let $i_{n} \colon \partial\Delta[n] \rightarrow \Delta[n]$ denote the boundary inclusion in ${\mathcal{SS}\mathrm{ets}}$. By Lemma \ref{notallislost}, ${R}_{1}(i_{n})$ is a monomorphism of $1$-reduced simplicial sets. Therefore, the hypothesis informs us that, for every $n \geq 0$, $f$ has the right lifting property with respect to ${R}_{1}(i_{n})$. Then, the adjunction ${R}_{1} \colon {\mathcal{SS}\mathrm{ets}} \rightleftarrows {\mathcal{SS}\mathrm{ets}}_{1} \colon U$ from Proposition \ref{notquite} yields that, for every $n \geq 0$, $f$ has the right lifting property with respect to the boundary inclusion $i_{n}$. We conclude that $f$ is an acyclic fibration in ${\mathcal{SS}\mathrm{ets}}$.
\end{proof}
We begin recovering the family of model structures on $1$-reduced simplicial sets in \cite[Part II, Theorem 2.2]{Quillen1969}.

\begin{theorem}\label{rightinduceeyebrow} There is a combinatorial and left proper model structure on the category ${\mathcal{SS}\mathrm{ets}}_{1}$ of $1$-reduced simplicial sets in which a map $f$ is

\begin{enumerate}

\item a weak equivalence if and only if $|f|$ is a weak homotopy equivalence; and

\item a cofibration if and only if $f$ is a monomorphism.

\end{enumerate}
In this model structure, a $1$-reduced simplicial set $X$ is fibrant if and only if $X$ is a Kan complex.
\end{theorem}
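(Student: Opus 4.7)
The plan is to apply Theorem \ref{leftinducetheorem} to the adjunction $F \colon {\mathcal{SS}\mathrm{ets}}_{1} \rightleftarrows {\mathcal{SS}\mathrm{ets}}_{\ast} \colon E_{1}$ from Lemma \ref{yesquite}(2). Its two standing hypotheses are already in place: ${\mathcal{SS}\mathrm{ets}}_{1}$ is locally presentable, ${\mathcal{SS}\mathrm{ets}}_{\ast}$ is combinatorial by Example \ref{defcombmodelcatdef1}, and the lifting condition demanded by the theorem is exactly Lemma \ref{verifybhkkrs}. This produces a combinatorial model structure on ${\mathcal{SS}\mathrm{ets}}_{1}$. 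Because $F$ is a forgetful functor that preserves the underlying simplicial map, a morphism $f$ in ${\mathcal{SS}\mathrm{ets}}_{1}$ is a weak equivalence (respectively a cofibration) in this left-induced structure if and only if $f$ is a weak homotopy equivalence (respectively a monomorphism), as claimed.

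For left properness, I would show that every object of ${\mathcal{SS}\mathrm{ets}}_{1}$ is cofibrant and then invoke Proposition \ref{leftpropernoworries}. The initial object of ${\mathcal{SS}\mathrm{ets}}_{1}$ is $\Delta[0]$, and for any $1$-reduced $X$ the unique map $\Delta[0] \to X$ is injective in every simplicial degree because $\Delta[0]_{n}$ is a singleton. Hence it is a monomorphism, and therefore a cofibration.

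The substantive step is the fibrant-object characterization. One direction is routine: if $X$ is a $1$-reduced Kan complex, then any acyclic cofibration $A \to B$ in ${\mathcal{SS}\mathrm{ets}}_{1}$ is, via $F$, an acyclic cofibration in ${\mathcal{SS}\mathrm{ets}}_{\ast}$, so a lift extending a given map $A \to X$ exists there; since $B$ is $1$-reduced, the lift automatically preserves basepoints and lies in ${\mathcal{SS}\mathrm{ets}}_{1}$. For the converse, given $X$ fibrant, I need to verify the horn-filling condition for each $n \geq 1$ and $0 \leq k \leq n$. The cases $n = 1$ and $n = 2$ should be handled by hand: the $1$-reducedness of $X$ forces the image of the horn to land in the basepoint and its degeneracy, so an iterated degeneracy of the basepoint supplies a filler. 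For $n \geq 3$, I would factor $\Lambda[n,k] \to X$ through its $1$-reduction via Lemma \ref{unproprreduction}, recognize $R_{1}(j_{n,k})$ as an acyclic cofibration in ${\mathcal{SS}\mathrm{ets}}_{1}$ using Lemmas \ref{sadcarefree}(3) and \ref{notallislost}, apply the fibrancy of $X$ to lift across it, and finally compose with the quotient $\Delta[n] \to R_{1}(\Delta[n])$ to obtain the required filler $\Delta[n] \to X$.

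The main obstacle is the case $n = 2$ in the converse direction. Lemma \ref{sadcarefree}(2) shows that $R_{1}(j_{2,k})$ is not a weak equivalence, so the left-transfer machinery cannot be invoked in that dimension, and the horn-filling must instead be produced directly using the $1$-reducedness of $X$.
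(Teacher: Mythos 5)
Your proposal is correct and follows essentially the same route as the paper: invoke Theorem \ref{leftinducetheorem} via Lemma \ref{verifybhkkrs}, get left properness from cofibrancy of all objects and Proposition \ref{leftpropernoworries}, and characterize fibrant objects by splitting the horn-filling into the $n \neq 2$ case (where $R_{1}(j_{n,k})$ is an acyclic cofibration by Lemmas \ref{sadcarefree} and \ref{notallislost}) and the problematic dimension-two case. Your by-hand filler for $n=1,2$ (the iterated degeneracy of the basepoint, available because $1$-reducedness forces the horn to be constant) is exactly the constant-map lift the paper produces through the $R_{1} \dashv U$ adjunction, so the two arguments coincide in substance.
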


\begin{proof} Theorem \ref{leftinducetheoremwit} yields every claim other than the characterization of fibrant objects. Every $1$-reduced Kan complex $X$ is fibrant in ${\mathcal{SS}\mathrm{ets}}_{1}$ because, by virtue of being a Kan complex, $X$ has the right lifting property with respect to the class of acyclic cofibrations in ${\mathcal{SS}\mathrm{ets}}_{1}$. Conversely, if a $1$-reduced simplicial set $X$ is fibrant in ${\mathcal{SS}\mathrm{ets}}_{1}$, then we show that $X$ is a Kan complex. For $n \geq 1$ and $0 \leq k \leq n$, we solve a lifting problem
\[\begin{tikzcd}[row sep=3mm, column sep=3mm, ampersand replacement=\&]
	{\Lambda[n, k]} \&\& X \\
	\\
	{\Delta[n].}
	\arrow[from=1-1, to=1-3]
	\arrow["{j_{n, k}}"', from=1-1, to=3-1]
	\arrow[dashed, from=3-1, to=1-3]
\end{tikzcd}\]
Using the adjunction ${R}_{1} \colon {\mathcal{SS}\mathrm{ets}} \rightleftarrows {\mathcal{SS}\mathrm{ets}}_{1} \colon U$ from Proposition \ref{notquite}, we equivalently solve a lifting problem
\[\begin{tikzcd}[row sep=3mm, column sep=3mm, ampersand replacement=\&]
	{{R}_{1}(\Lambda[n, k])} \&\& X \\
	\\
	{{R}_{1}(\Delta[n]).}
	\arrow[from=1-1, to=1-3]
	\arrow["{{R}_{1}(j_{n, k})}"', from=1-1, to=3-1]
	\arrow[dashed, from=3-1, to=1-3]
\end{tikzcd}\]
In light of Lemma \ref{sadcarefree}, we split our solution in two separate cases. If $n \neq 2$, then, by Lemmas \ref{sadcarefree} and \ref{notallislost}, ${R}_{1}(j_{n, k})$ is an acyclic cofibration in ${\mathcal{SS}\mathrm{ets}}_{1}$, so our lifting problem has a solution since $X$ is fibrant in ${\mathcal{SS}\mathrm{ets}}_{1}$. If $n=2$, then we solve a lifting problem
\[\begin{tikzcd}[row sep=3mm, column sep=3mm, ampersand replacement=\&]
	{\Delta[0]} \&\& X \\
	\\
	{S_{\mathrm{simp}}^{2},}
	\arrow["{x_{0}}", from=1-1, to=1-3]
	\arrow["{{R}_{1}(j_{2, k})}"', from=1-1, to=3-1]
	\arrow["{\psi}", dashed, from=3-1, to=1-3]
\end{tikzcd}\]
where $x_{0}$ denotes the unique $0$-simplex of $X$, by setting $\psi$ to be the constant map at $x_{0}$.
\end{proof}
The left transfer renders $F \colon {\mathcal{SS}\mathrm{ets}}_{1} \rightleftarrows {\mathcal{SS}\mathrm{ets}}_{\ast} \colon E_{1}$ a Quillen adjunction \cite[Definition 8.5.2]{Hirschhorn2003}, so $E_{1}$ preserves fibrant objects; an alternative proof of this preservation uses the pullback definition of the $1$st Eilenberg subcomplex together with the fact that, if $X$ is a Kan complex, then $X \rightarrow \operatorname{cosk}_{1}(X)$ is a Kan fibration. We record the following consequence for the next section.

\begin{corollary}\label{forlaterusetbacor} If $Y$ is a pointed topological space, then $E_{1}\operatorname{Sing}(Y)$ is a Kan complex.
\end{corollary}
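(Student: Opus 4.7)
The plan is to deduce this corollary immediately from Corollary \ref{forlaterusetba} by showing that $\operatorname{Sing}(Y)$ satisfies the hypotheses of that corollary, namely that it is a pointed Kan complex. There is essentially no new content here beyond combining two facts already established in the excerpt.

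First, I would note that since $Y$ is pointed, say with basepoint $y_{0} \colon \ast \to Y$, applying the functor $\operatorname{Sing}$ yields a distinguished $0$-simplex of $\operatorname{Sing}(Y)$, giving $\operatorname{Sing}(Y)$ the structure of a pointed simplicial set. Second, by Example \ref{defkanfibrdef2}, the singular complex $\operatorname{Sing}(Y)$ of any topological space is a Kan complex. Thus $\operatorname{Sing}(Y)$ is a pointed Kan complex.

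Having checked the hypothesis, I would then invoke Corollary \ref{forlaterusetba} directly with $X = \operatorname{Sing}(Y)$ to conclude that $E_{1}\operatorname{Sing}(Y)$ is a Kan complex. Since the entire argument is a one-line application of the preceding corollary, there is no real obstacle; the only thing worth remarking is that no additional structure on $Y$ beyond being pointed is needed, because the Kan property of $\operatorname{Sing}(Y)$ holds without hypotheses on $Y$.
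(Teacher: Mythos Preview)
Your proposal is correct and matches the paper's own proof essentially verbatim: the paper simply notes that $\operatorname{Sing}(Y)$ is a pointed Kan complex by Example~\ref{defkanfibrdef2} and then applies Corollary~\ref{forlaterusetba}.
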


\section{Fibrations of $1$-reduced simplicial sets}\label{section4}
We study the fibrations in the model category ${\mathcal{SS}\mathrm{ets}}_{1}$. Namely, we produce functorial fibrant replacements, a generating set of cofibrations, and a set of acyclic cofibrations that detects fibrant objects. Moreover, we characterize when a fibration in ${\mathcal{SS}\mathrm{ets}}_{1}$ is a Kan fibration, and we find a counterintuitive fibration in ${\mathcal{SS}\mathrm{ets}}_{1}$.

The existence of such counterintuitive fibrations in ${\mathcal{SS}\mathrm{ets}}_{1}$ serves not only as an impediment toward finding a generating set of acyclic cofibrations, but also as evidence that the model category ${\mathcal{SS}\mathrm{ets}}_{1}$ should not be thought of as a restriction of the model structure on pointed simplicial sets. Instead, ${\mathcal{SS}\mathrm{ets}}_{1}$ encodes its own homotopy theory that has its individual intricacies, some of which we shed light on in this section.

\subsection{Fibrant replacement of $1$-reduced simplicial sets}\label{section4.1} We invoke Corollary \ref{forlaterusetbacor} to produce functorial fibrant replacements in ${\mathcal{SS}\mathrm{ets}}_{1}$. Let $X$ be a $1$-reduced simplicial set. The geometric realization-singular complex adjunction provides the map $X \rightarrow \operatorname{Sing}(|X|)$, which lands into $E_{1}\operatorname{Sing}(|X|)$. We get the commutative diagram
\[\begin{tikzcd}[row sep=3mm, column sep=3mm, ampersand replacement=\&]
	X \&\& {E_{1}\operatorname{Sing}(|X|)} \\
	\\
	\&\& {\operatorname{Sing}(|X|),}
	\arrow[from=1-1, to=1-3]
	\arrow[from=1-1, to=3-3]
	\arrow[from=1-3, to=3-3]
\end{tikzcd}\]
where $E_{1}\operatorname{Sing}(|X|) \rightarrow \operatorname{Sing}(|X|)$ is the inclusion map. We show that the natural map \[X \rightarrow E_{1}\operatorname{Sing}(|X|)\] is a weak equivalence, thus a functorial fibrant replacement of $X$ in ${\mathcal{SS}\mathrm{ets}}_{1}$. To do so, it suffices to show that the other two maps in the commutative diagram are weak equivalences. Firstly, part (1) of the following theorem says that $X \rightarrow \operatorname{Sing}(|X|)$ is a weak equivalence; part (2) is for later use in our argument.

\begin{theorem}[{\cite[Chapter I, Theorem 11.4]{GoerssJardine1999}}]\label{quilleneqsecretly}

\begin{enumerate}

\item For every simplicial set $X$, the map $X \rightarrow \operatorname{Sing}(|X|)$ is a weak equivalence.

\item For every topological space $Y$, the map $|\operatorname{Sing}(Y)| \rightarrow Y$ is a weak homotopy equivalence.

\end{enumerate}

\end{theorem}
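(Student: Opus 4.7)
The plan is to establish Part (2) first and to derive Part (1) from it via a triangle-identity argument combined with the 2-out-of-3 property. Throughout, I use that weak homotopy equivalences of simplicial sets are defined, as in Example \ref{defmodelcatdef1}, as those maps whose geometric realizations are weak homotopy equivalences of topological spaces.

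For Part (2), I would compute the effect of the counit $\varepsilon_{Y} \colon |\operatorname{Sing}(Y)| \to Y$ on $\pi_n$ for every $n \geq 0$ and every basepoint $y_0 \in Y$. Since $\operatorname{Sing}(Y)$ is a Kan complex by Example \ref{defkanfibrdef2}, its simplicial homotopy groups admit a combinatorial description as pointed homotopy classes of maps $S_{\mathrm{simp}}^{n} \to \operatorname{Sing}(Y)$ out of the simplicial $n$-sphere from Example \ref{sothisdoeswork}. The adjunction of Proposition \ref{actuallysimpadj}, together with the identification $|S_{\mathrm{simp}}^{n}| \cong S^{n}$ as pointed topological spaces, places these classes in natural bijection with $\pi_n(Y, y_0)$. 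The other ingredient, bridging the simplicial and topological worlds, is the natural isomorphism $\pi_{n}^{\mathrm{simp}}(K, k_0) \cong \pi_n(|K|, k_0)$ for any pointed Kan complex $(K, k_0)$. Specializing to $K = \operatorname{Sing}(Y)$ and chasing the two identifications through the definition of $\varepsilon_Y$ shows that $\varepsilon_Y$ induces an isomorphism on all homotopy groups, hence is a weak homotopy equivalence.

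For Part (1), apply the geometric realization functor to the unit $\eta_X \colon X \to \operatorname{Sing}(|X|)$ to obtain $|\eta_X| \colon |X| \to |\operatorname{Sing}(|X|)|$. A triangle identity of the adjunction in Proposition \ref{actuallysimpadj} yields $\varepsilon_{|X|} \circ |\eta_X| = \operatorname{id}_{|X|}$. Part (2) gives that $\varepsilon_{|X|}$ is a weak homotopy equivalence, and the identity is trivially so, so the 2-out-of-3 property for weak homotopy equivalences of topological spaces forces $|\eta_X|$ to be a weak homotopy equivalence. By definition, $\eta_X$ is then a weak homotopy equivalence of simplicial sets.

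The main obstacle is establishing the natural isomorphism $\pi_{n}^{\mathrm{simp}}(K, k_0) \cong \pi_n(|K|, k_0)$ for pointed Kan complexes, and verifying that the respective group operations are compatible under this isomorphism. Both points typically rest on Milnor's theorem that geometric realization preserves finite products when one works in the category of compactly generated Hausdorff spaces, which is what allows simplicial prism decompositions of homotopies in $K$ to be transported without loss to topological homotopies in $|K|$.
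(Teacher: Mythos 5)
The paper does not actually prove this statement: it is quoted directly from \cite[Chapter I, Theorem 11.4]{GoerssJardine1999}, so your proposal can only be measured against the proof given there. The formal parts of your plan are fine. The deduction of Part (1) from Part (2) via the triangle identity $\varepsilon_{|X|} \circ |\eta_{X}| = \operatorname{id}_{|X|}$ and two-out-of-three is correct, given the convention that a simplicial map is a weak homotopy equivalence exactly when its realization is one. Likewise, the adjunction identification $[S^{n}_{\mathrm{simp}}, \operatorname{Sing}(Y)]_{\ast} \cong \pi_{n}(Y, y_{0})$ is unproblematic, since simplicial and topological homotopies correspond under $|A \times \Delta[1]| \cong |A| \times [0,1]$.

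The genuine gap is the remaining ingredient of Part (2): the claim that the comparison map $\pi_{n}^{\mathrm{simp}}(K, k_{0}) \to \pi_{n}(|K|, k_{0})$ is an isomorphism for every pointed Kan complex $K$, which you propose to justify by Milnor's product-preservation theorem. Product preservation only shows that simplicial homotopies realize to topological homotopies, i.e., that the comparison map is well defined and compatible with the group structures; it gives neither surjectivity (every map $S^{n} \to |K|$ is homotopic to the realization of a simplicial map $S^{n}_{\mathrm{simp}} \to K$) nor injectivity. Those two statements are precisely the substance of the theorem you are proving: granting your adjunction identification, bijectivity of the comparison for $K = \operatorname{Sing}(Y)$ is equivalent to Part (2) (formally, $(\varepsilon_{Y})_{\ast}$ composed with the comparison map is the adjunction bijection, so surjectivity of $(\varepsilon_{Y})_{\ast}$ is automatic, but its injectivity needs exactly the surjectivity of the comparison), and the case of a general Kan complex is equivalent to Parts (1) and (2) together. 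In \cite{GoerssJardine1999} this is where all the work lies: the theory of minimal Kan fibrations, Quillen's theorem that the realization of a Kan fibration is a Serre fibration, and an inductive comparison of long exact sequences are what establish the isomorphism. As written, your argument assumes this hard step under an insufficient justification, so it is circular in substance rather than a proof.
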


\begin{remark}\label{quilleneqsecretlyrmk} By Example \ref{defkanfibrdef2}, part (1) defines a functorial fibrant replacement in $\mathcal{SS}\mathrm{ets}$.
\end{remark}
The inclusion $E_{1}\operatorname{Sing}(|X|) \rightarrow \operatorname{Sing}(|X|)$ is also a weak equivalence by the following theorem.

\begin{theorem}[{\cite[Theorem 8.4(iii)]{May1967}}]\label{mayof67} If $X$ is a pointed Kan complex, then the $1$st Eilenberg subcomplex inclusion $E_{1}(X) \rightarrow X$ induces, for every $n \geq 2$, an isomorphism $\pi_{n}(E_{1}(X)) \xrightarrow{\cong} \pi_{n}(X)$. In particular, if $X$ is a pointed and $1$-connected Kan complex, then the inclusion $E_{1}(X) \rightarrow X$ is a weak equivalence.
\end{theorem}
In other words, if $X$ is a pointed Kan complex, then the inclusion $E_{1}(X) \rightarrow X$ is a $1$-connected cover of $X$. As an aside, using the construction of the homotopy groups of Kan complexes from \cite[Chapter I.7]{GoerssJardine1999} together with Proposition \ref{unproprEilenberg}, we give an alternative proof of Theorem \ref{mayof67} to that in {\cite[Theorem 8.4(iii)]{May1967}}.

\begin{proof} We first show that $\pi_{n}(E_{1}(X)) \rightarrow \pi_{n}(X)$ is an epimorphism. As $X$ is a Kan complex, every element of $\pi_{n}(X)$ is of the form $[\omega]$ for a map $\omega \colon S_{\mathrm{simp}}^{n} \rightarrow X$, and the universal property of the $1$st Eilenberg subcomplex yields the commutative diagram

\[\begin{tikzcd}[row sep=3mm, column sep=3mm, ampersand replacement=\&]
	{S_{\mathrm{simp}}^{n}} \&\& {E_{1}(X)} \\
	\\
	\&\& {X.}
	\arrow["{\hat{\omega}}", dashed, from=1-1, to=1-3]
	\arrow["{\omega}"', from=1-1, to=3-3]
	\arrow[from=1-3, to=3-3]
\end{tikzcd}\]
It remains to show that $\pi_{n}(E_{1}(X)) \rightarrow \pi_{n}(X)$ is a monomorphism. Given a homotopy $H \colon S_{\mathrm{simp}}^{n} \wedge \Delta[1]_{+} \rightarrow X$, we observe that $S_{\mathrm{simp}}^{n} \wedge \Delta[1]_{+}$ is $1$-reduced by Lemma \ref{smashingtensor}, so we obtain the commutative diagram

\[\begin{tikzcd}[row sep=3mm, column sep=3mm, ampersand replacement=\&]
	{S_{\mathrm{simp}}^{n} \wedge \Delta[1]_{+}} \&\& {E_{1}(X)} \\
	\\
	\&\& {X.}
	\arrow["{\hat{H}}", dashed, from=1-1, to=1-3]
	\arrow["{H}"', from=1-1, to=3-3]
	\arrow[from=1-3, to=3-3]
\end{tikzcd}\]
\end{proof}

We infer that the inclusion $E_{1}\operatorname{Sing}(|X|) \rightarrow \operatorname{Sing}(|X|)$ is also a weak equivalence.

\begin{corollary}\label{mayof67cor} If $Y$ is a pointed and $1$-connected space, then  $E_{1}\operatorname{Sing}(Y) \rightarrow \operatorname{Sing}(Y)$ is a weak equivalence.
\end{corollary}

\begin{proof} By Example \ref{defkanfibrdef2}, $\operatorname{Sing}(Y)$ is a Kan complex. By Theorem \ref{quilleneqsecretly}(2), $\operatorname{Sing}(Y)$ is $1$-connected.
\end{proof}
In conclusion, we obtain functorial fibrant replacements in ${\mathcal{SS}\mathrm{ets}}_{1}$.

\begin{proposition}\label{fibrepinogcat} Let $X$ be a $1$-reduced simplicial set. Then, the natural map $X \rightarrow E_{1}\operatorname{Sing}(|X|)$ is a functorial fibrant replacement of $X$ in ${\mathcal{SS}\mathrm{ets}}_{1}$.
\end{proposition}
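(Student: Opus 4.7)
The plan is to harvest all the work already done in the lead-up to this proposition, since the author has deliberately set up the key commutative triangle
\[\begin{tikzcd}[row sep=3mm, column sep=3mm, ampersand replacement=\&]
X \&\& {E_{1}\operatorname{Sing}(|X|)} \\
\\
\&\& {\operatorname{Sing}(|X|).}
\arrow[from=1-1, to=1-3]
\arrow[from=1-1, to=3-3]
\arrow[from=1-3, to=3-3]
\end{tikzcd}\]
Recall from Theorem \ref{rightinduceeyebrow} that a map in $\mathcal{SS}\mathrm{ets}_{1}$ is a weak equivalence precisely when it is a weak homotopy equivalence and that fibrant objects in $\mathcal{SS}\mathrm{ets}_{1}$ are exactly $1$-reduced Kan complexes. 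Thus the proposition reduces to checking two things: (i) the top horizontal map is a weak homotopy equivalence, and (ii) the target $E_{1}\operatorname{Sing}(|X|)$ is a Kan complex.

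For (i), I would use the two-out-of-three property on the displayed triangle. The slanted map $X \to \operatorname{Sing}(|X|)$ is a weak homotopy equivalence by Theorem \ref{quilleneqsecretly}(1). For the vertical inclusion $E_{1}\operatorname{Sing}(|X|) \to \operatorname{Sing}(|X|)$, I would invoke Corollary \ref{mo67cor}, for which I need the geometric realization $|X|$ to be pointed and $1$-connected; this is exactly Remark \ref{defreducedrefrmk}, which tells us that the geometric realization of a $1$-reduced simplicial set is a $1$-connected CW complex (it is canonically pointed since $X$ has a unique $0$-simplex). Two-out-of-three then gives that $X \to E_{1}\operatorname{Sing}(|X|)$ is a weak homotopy equivalence.

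For (ii), I would simply cite Corollary \ref{forlaterusetbacor}, which states precisely that $E_{1}\operatorname{Sing}(Y)$ is a Kan complex whenever $Y$ is a pointed topological space; applying it to $Y = |X|$ finishes the argument. Functoriality in $X$ is automatic because each of the three constructions $|-|$, $\operatorname{Sing}$, and $E_{1}$ is a functor and the maps involved (the unit of the realization-singular adjunction and the $1$st Eilenberg subcomplex inclusion) are natural. There is no genuine obstacle here: the entire content of the proposition has been packaged into the preceding lemmas, and the proof amounts to writing down the triangle and citing Theorem \ref{quilleneqsecretly}(1), Corollary \ref{mo67cor}, and Corollary \ref{forlaterusetbacor} in sequence.
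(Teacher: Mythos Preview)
Your proposal is correct and follows exactly the approach laid out in the paper: the argument for the weak equivalence is the two-out-of-three reasoning on the displayed triangle using Theorem \ref{quilleneqsecretly}(1) and Corollary \ref{mo67cor}, and fibrancy of the target is Corollary \ref{forlaterusetbacor}. The paper presents this argument in the running text of Section \ref{section4.1} leading up to the proposition rather than as a formal proof block, but the content is identical.
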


\subsection{Generating cofibrations of $1$-reduced simplicial sets}\label{section4.2}
Our left transfer from Theorem \ref{rightinduceeyebrow} provides us with a combinatorial model structure on ${\mathcal{SS}\mathrm{ets}}_{1}$, but not with explicit generating sets of cofibrations and acyclic cofibrations. As a first remedy, we extract an explicit generating set of cofibrations for ${\mathcal{SS}\mathrm{ets}}_{1}$ from the set of boundary inclusions for ${\mathcal{SS}\mathrm{ets}}$ using the $1$-reduction functor $R_{1}$.

\begin{proposition}\label{gencofset} Let $I=\{i_{n} \colon \partial\Delta[n] \rightarrow \Delta[n] \mid n \geq 0\}$ denote the generating set of cofibrations for ${\mathcal{SS}\mathrm{ets}}$. Let $f \colon X \rightarrow Y$ be a map of $1$-reduced simplicial sets. The following statements are equivalent.

\begin{enumerate}

\item The map $f \colon X \rightarrow Y$ is an acyclic fibration in ${\mathcal{SS}\mathrm{ets}}$.

\item The map $f \colon X \rightarrow Y$ is an acyclic fibration in ${\mathcal{SS}\mathrm{ets}}_{1}$.

\item The map $f \colon X \rightarrow Y$ has the right lifting property with respect to the set \[{R}_{1}(I)=\{{R}_{1}(i_{n}) \colon {R}_{1}(\partial\Delta[n]) \rightarrow {R}_{1}(\Delta[n]) \mid n \geq 0\}.\]

\end{enumerate}
Hence, ${R}_{1}(I)$ is a generating set of cofibrations for ${\mathcal{SS}\mathrm{ets}}_{1}$.
\end{proposition}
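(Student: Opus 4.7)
The plan is to prove the three equivalences by a short cycle $(1) \Rightarrow (2) \Rightarrow (3) \Rightarrow (1)$, using only the adjunction $R_{1} \dashv U$ from Lemma \ref{notquite}, the fact (Lemma \ref{notallislost}) that $R_{1}$ preserves monomorphisms, and the description of the left-transferred model structure in Theorem \ref{rightinduceeyebrow}.

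For $(1) \Rightarrow (2)$: Theorem \ref{rightinduceeyebrow} states that the weak equivalences in ${\mathcal{SS}\mathrm{ets}}_{1}$ are the weak homotopy equivalences and the cofibrations are the monomorphisms, so a map of $1$-reduced simplicial sets that is a weak equivalence in ${\mathcal{SS}\mathrm{ets}}$ is automatically a weak equivalence in ${\mathcal{SS}\mathrm{ets}}_{1}$, and every monomorphism in ${\mathcal{SS}\mathrm{ets}}_{1}$ is a monomorphism in ${\mathcal{SS}\mathrm{ets}}$; the right lifting property therefore transports along. For $(2) \Rightarrow (3)$: by Lemma \ref{notallislost}, each $R_{1}(i_{n})$ is a monomorphism of $1$-reduced simplicial sets, hence a cofibration in ${\mathcal{SS}\mathrm{ets}}_{1}$, so any acyclic fibration in ${\mathcal{SS}\mathrm{ets}}_{1}$ has the right lifting property with respect to the whole set $R_{1}(I)$.

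The main step is $(3) \Rightarrow (1)$, and it is where we pay off the adjunction $R_{1} \colon {\mathcal{SS}\mathrm{ets}} \rightleftarrows {\mathcal{SS}\mathrm{ets}}_{1} \colon U$. Because $X$ and $Y$ are $1$-reduced, Lemma \ref{notquite} converts any lifting problem for $f$ against the boundary inclusion $i_{n} \colon \partial\Delta[n] \to \Delta[n]$ in ${\mathcal{SS}\mathrm{ets}}$ bijectively into a lifting problem against $R_{1}(i_{n})$ in ${\mathcal{SS}\mathrm{ets}}_{1}$, and this correspondence preserves the existence of a lift. Hence $f$ having the right lifting property with respect to $R_{1}(I)$ implies $f$ has the right lifting property with respect to $I$, which means $f$ is an acyclic fibration in ${\mathcal{SS}\mathrm{ets}}$.

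For the concluding assertion that $R_{1}(I)$ is a generating set of cofibrations for ${\mathcal{SS}\mathrm{ets}}_{1}$, the equivalence $(2) \Leftrightarrow (3)$ is precisely the detection property required in Definition \ref{defcofgenmcdef}. The small object argument is available because ${\mathcal{SS}\mathrm{ets}}_{1}$ is locally presentable, so every object, and in particular every $R_{1}(\partial\Delta[n])$, is small. I do not expect any serious obstacle; the only place that could mislead is confusing monomorphisms in ${\mathcal{SS}\mathrm{ets}}$ versus ${\mathcal{SS}\mathrm{ets}}_{1}$, but Lemma \ref{notallislost} and the description of cofibrations in Theorem \ref{rightinduceeyebrow} resolve this immediately.
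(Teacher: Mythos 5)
Your proposal is correct and follows essentially the same route as the paper's proof: the cycle $(1)\Rightarrow(2)\Rightarrow(3)\Rightarrow(1)$ using the right lifting property against monomorphisms, Lemma \ref{notallislost}, and the adjunction ${R}_{1} \dashv U$ from Lemma \ref{notquite} to translate lifting problems. Your added remark on the small object argument via local presentability only makes explicit what the paper leaves implicit in the final assertion.
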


\begin{proof} Firstly, we show that (1) implies (2). If $f$ is an acyclic fibration in ${\mathcal{SS}\mathrm{ets}}$, then $f$ has the right lifting property with respect to all monomorphisms in ${\mathcal{SS}\mathrm{ets}}$. In particular, $f$ has the right lifting property with respect to all monomorphisms in ${\mathcal{SS}\mathrm{ets}}_{1}$, so $f$ is an acyclic fibration in ${\mathcal{SS}\mathrm{ets}}_{1}$. Also, (2) implies (3) as, by Lemma \ref{notallislost}, ${R}_{1}(I)$ is a set of cofibrations in ${\mathcal{SS}\mathrm{ets}}_{1}$. Lastly, we show that (3) implies (1). Suppose $f$ has the right lifting property with respect to ${R}_{1}(I)$. To prove that $f$ is an acyclic fibration in ${\mathcal{SS}\mathrm{ets}}$, we solve, for $n \geq 0$, a lifting problem
\[\begin{tikzcd}[row sep=3mm, column sep=3mm, ampersand replacement=\&]
	{\partial\Delta[n]} \&\& X \\
	\\
	{\Delta[n]} \&\& {Y.}
	\arrow[from=1-1, to=1-3]
	\arrow["{i_{n}}"', from=1-1, to=3-1]
	\arrow["f"', from=1-3, to=3-3]
	\arrow[dashed, from=3-1, to=1-3]
	\arrow[from=3-1, to=3-3]
\end{tikzcd}\]
Under the adjunction ${R}_{1} \colon {\mathcal{SS}\mathrm{ets}} \rightleftarrows {\mathcal{SS}\mathrm{ets}}_{1} \colon U$ from Proposition \ref{notquite}, we equivalently solve a lifting problem
\[\begin{tikzcd}[row sep=3mm, column sep=3mm, ampersand replacement=\&]
	{{R}_{1}(\partial\Delta[n])} \&\& X \\
	\\
	{{R}_{1}(\Delta[n])} \&\& {Y.}
	\arrow[from=1-1, to=1-3]
	\arrow["{{R}_{1}(i_{n})}"', from=1-1, to=3-1]
	\arrow["f"', from=1-3, to=3-3]
	\arrow[dashed, from=3-1, to=1-3]
	\arrow[from=3-1, to=3-3]
\end{tikzcd}\]
As ${R}_{1}(i_{n})$ is in ${R}_{1}(I)$ and $f$ has the right lifting property with respect to ${R}_{1}(I)$, a solution exists.
\end{proof}
Even though our treatment of Proposition \ref{gencofset} is biased toward the equivalence of statements (2) and (3), which grants us our generating set of cofibrations for ${\mathcal{SS}\mathrm{ets}}_{1}$, we would be remiss if we neglected the equivalence of statements (1) and (2). The fact that statements (1) and (2) are equivalent means that ${\mathcal{SS}\mathrm{ets}}_{1}$ has no pathological acyclic fibrations, which follows from the fact that ${\mathcal{SS}\mathrm{ets}}_{1}$ has no pathological cofibrations. As we shall see later in this section, Proposition \ref{gencofset} finds no analog in the study of acyclic cofibrations in ${\mathcal{SS}\mathrm{ets}}_{1}$ because ${\mathcal{SS}\mathrm{ets}}_{1}$ has fibrations that are dissimilar to Kan fibrations.

\subsection{Acyclic cofibrations of $1$-reduced simplicial sets}\label{section4.3}
Our attempt to emulate Proposition \ref{gencofset} toward a generating set of acyclic cofibrations for ${\mathcal{SS}\mathrm{ets}}_{1}$ encounters a problem: the $1$-reduction of a boundary inclusion is a cofibration in ${\mathcal{SS}\mathrm{ets}}_{1}$, but the $1$-reduction of a horn inclusion need not be an acyclic cofibration in ${\mathcal{SS}\mathrm{ets}}_{1}$.

\begin{lemma}\label{combinebadnews} For $n \geq 1$ and $0 \leq k \leq n$, let $j_{n, k} \colon \Lambda[n, k] \rightarrow \Delta[n]$ denote the horn inclusion in ${\mathcal{SS}\mathrm{ets}}$. Then, ${R}_{1}(j_{n, k})$ is an acyclic cofibration in ${\mathcal{SS}\mathrm{ets}}_{1}$ if and only if $n \neq 2$.
\end{lemma}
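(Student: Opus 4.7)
The plan is to combine the two preceding lemmas. By Lemma \ref{notallislost}, since each horn inclusion $j_{n,k}$ is a monomorphism in ${\mathcal{SS}\mathrm{ets}}$, its $1$-reduction $R_{1}(j_{n,k})$ is a monomorphism in ${\mathcal{SS}\mathrm{ets}}_{1}$, which by Theorem \ref{rightinduceeyebrow} is the same as a cofibration in ${\mathcal{SS}\mathrm{ets}}_{1}$. Thus, for every $n \geq 1$ and every $0 \leq k \leq n$, the map $R_{1}(j_{n,k})$ is automatically a cofibration, and the statement reduces to determining for which $n$ it is also a weak homotopy equivalence.

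For this, I would simply invoke Lemma \ref{sadcarefree}. When $n=1$, $R_{1}(j_{1,k})$ is the identity on a point, trivially a weak equivalence. When $n \geq 3$, $|R_{1}(j_{n,k})|$ is the inclusion of a strong deformation retract in ${\mathcal{T}\mathrm{op}}$, hence a homotopy equivalence, so $R_{1}(j_{n,k})$ is a weak homotopy equivalence. Combined with the cofibration observation, this handles the "if" direction, yielding that $R_{1}(j_{n,k})$ is an acyclic cofibration in ${\mathcal{SS}\mathrm{ets}}_{1}$ whenever $n \neq 2$.

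For the converse, when $n=2$, Lemma \ref{sadcarefree}(2) identifies $R_{1}(j_{2,k})$ with the basepoint inclusion $\Delta[0] \rightarrow S_{\mathrm{simp}}^{2}$. Since $\pi_{2}(S_{\mathrm{simp}}^{2}) \cong \mathbb{Z}$ is nontrivial, this map is not a weak homotopy equivalence, so $R_{1}(j_{2,k})$ fails to be an acyclic cofibration in ${\mathcal{SS}\mathrm{ets}}_{1}$. There is no genuine obstacle here: the lemma is essentially a bookkeeping corollary of Lemmas \ref{sadcarefree} and \ref{notallislost} together with the characterization of cofibrations in Theorem \ref{rightinduceeyebrow}.
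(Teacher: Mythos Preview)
Your proof is correct and follows exactly the same approach as the paper's own argument: the paper's proof simply states that the claim follows from Lemmas \ref{sadcarefree} and \ref{notallislost} together with the definition of acyclic cofibrations in ${\mathcal{SS}\mathrm{ets}}_{1}$, which is precisely what you have unpacked in detail.
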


\begin{proof} The claim follows from Lemmas \ref{sadcarefree} and \ref{notallislost} and the definition of acyclic cofibrations in ${\mathcal{SS}\mathrm{ets}}_{1}$.
\end{proof}
We can partially work our way around this obstacle. Let $J=\{j_{n, k} \colon \Lambda[n, k] \rightarrow \Delta[n] \mid n \geq 1 \textrm{ and } 0 \leq k \leq n\}$ denote the generating set of acyclic cofibrations for ${\mathcal{SS}\mathrm{ets}}$. Write \[{R}_{1}(J)=\{{R}_{1}(j_{n, k}) \colon {R}_{1}(\Lambda[n, k]) \rightarrow {R}_{1}(\Delta[n]) \mid n \geq 1 \textrm{ and } 0 \leq k \leq n\}.\] To extract a candidate generating set of acyclic cofibrations for ${\mathcal{SS}\mathrm{ets}}$, we use Lemma \ref{sadcarefree} to strike out

\begin{enumerate}

\item the maps ${R}_{1}(j_{1, 0})$ and ${R}_{1}(j_{1, 1})$, which are the identity map of the point; and

\item the maps ${R}_{1}(j_{2, 0})$, ${R}_{1}(j_{2, 1})$, and ${R}_{1}(j_{2, 2})$ which are not weak equivalences.

\end{enumerate}
Therefore, we write $\widetilde{J}=\{j_{n, k} \colon \Lambda[n, k] \rightarrow \Delta[n] \mid n \geq 3 \textrm{ and } 0 \leq k \leq n\}$, whose elements are acyclic cofibrations in ${\mathcal{SS}\mathrm{ets}}$, in order to write \[{R}_{1}(\widetilde{J})=\{{R}_{1}(j_{n, k}) \colon {R}_{1}(\Lambda[n, k]) \rightarrow {R}_{1}(\Delta[n]) \mid n \geq 3 \textrm{ and } 0 \leq k \leq n\},\] which is a set of acyclic cofibrations in ${\mathcal{SS}\mathrm{ets}}_{1}$. Hence, having the right lifting property with respect to the set ${R}_{1}(\widetilde{J})$ is necessary for a map of $1$-reduced simplicial sets to be a fibration in ${\mathcal{SS}\mathrm{ets}}_{1}$. We partially address the question of sufficiency; namely, we establish that the set ${R}_{1}(\widetilde{J})$ detects fibrant objects in ${\mathcal{SS}\mathrm{ets}}_{1}$.

\begin{proposition}\label{quasicataddendum} Let $X$ be a $1$-reduced simplicial set. The following statements are equivalent.

\begin{enumerate}

\item The $1$-reduced simplicial set $X$ is a Kan complex.

\item The $1$-reduced simplicial set $X$ is fibrant in ${\mathcal{SS}\mathrm{ets}}_{1}$.

\item The map $X \rightarrow \Delta[0]$ has the right lifting property with respect to the set \[{R}_{1}(\widetilde{J})=\{{R}_{1}(j_{n, k}) \colon {R}_{1}(\Lambda[n, k]) \rightarrow {R}_{1}(\Delta[n]) \mid n \geq 3 \textrm{ and } 0 \leq k \leq n\}.\]

\end{enumerate}
Hence, the set ${R}_{1}(\widetilde{J})$ detects fibrant objects in ${\mathcal{SS}\mathrm{ets}}_{1}$.
\end{proposition}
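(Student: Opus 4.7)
The plan is to route everything through Theorem \ref{rightinduceeyebrow}, which already supplies the equivalence (1) $\iff$ (2). Thus the only thing that remains is to prove the equivalence (2) $\iff$ (3).

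For the direction (2) $\Rightarrow$ (3), I would verify that every map in $R_{1}(\widetilde{J})$ is an acyclic cofibration in ${\mathcal{SS}\mathrm{ets}}_{1}$. Each such map is a monomorphism by Lemma \ref{notallislost}, hence a cofibration in ${\mathcal{SS}\mathrm{ets}}_{1}$; and it is a weak homotopy equivalence by Lemma \ref{sadcarefree}(3), which applies precisely because $\widetilde{J}$ contains only horn inclusions with $n \geq 3$. If $X$ is fibrant in ${\mathcal{SS}\mathrm{ets}}_{1}$, then $c \colon X \to \Delta[0]$ is a fibration and so has the right lifting property with respect to the class of acyclic cofibrations, and in particular with respect to the set $R_{1}(\widetilde{J})$.

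The substantive direction is (3) $\Rightarrow$ (1). Given the hypothesis, I want to show that $X$ has the right lifting property with respect to every horn inclusion $j_{n, k} \colon \Lambda[n, k] \to \Delta[n]$ in ${\mathcal{SS}\mathrm{ets}}$. Using the adjunction $R_{1} \dashv U$ of Lemma \ref{notquite} to transpose lifting problems, this reduces to showing that $c$ has the right lifting property with respect to $R_{1}(j_{n, k})$ for all $n \geq 1$ and $0 \leq k \leq n$. The cases $n \geq 3$ are immediate from hypothesis (3), so only $n = 1$ and $n = 2$ remain, and these I would treat exactly as in the characterization of fibrant objects in the proof of Theorem \ref{rightinduceeyebrow}: by Lemma \ref{sadcarefree}(1), $R_{1}(j_{1, k})$ is the identity of the point, so every lifting problem has a unique solution; by Lemma \ref{sadcarefree}(2), $R_{1}(j_{2, k}) \colon \Delta[0] \to S_{\mathrm{simp}}^{2}$ picks out the unique $0$-simplex of $S_{\mathrm{simp}}^{2}$, and since $X$ is $1$-reduced, the constant map at its unique $0$-simplex $x_{0}$ solves any such lifting problem.

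I do not expect a genuine obstacle: the proof is essentially a refinement of the proof of Theorem \ref{rightinduceeyebrow}, arranged so as to point out that the horn fillings excluded from $J$ in forming $\widetilde{J}$ (namely those of dimensions $1$ and $2$) are automatically available from the $1$-reducedness of $X$. The only mild care needed is to keep track of the adjunction transposition that allows us to move freely between lifting problems against $j_{n, k}$ and against $R_{1}(j_{n, k})$.
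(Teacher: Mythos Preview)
Your proposal is correct and follows essentially the same route as the paper: the equivalence (1) $\iff$ (2) is taken from Theorem \ref{rightinduceeyebrow}, (2) $\Rightarrow$ (3) uses that every map in $R_{1}(\widetilde{J})$ is an acyclic cofibration in ${\mathcal{SS}\mathrm{ets}}_{1}$, and (3) $\Rightarrow$ (1) transposes horn lifting problems across the adjunction $R_{1} \dashv U$ and handles the cases $n=1$ and $n=2$ by the explicit descriptions in Lemma \ref{sadcarefree}. The paper's presentation differs only cosmetically, splitting the case analysis in a slightly different order.
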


\begin{proof} Firstly, (1) and (2) are equivalent by the characterization of fibrant objects in ${\mathcal{SS}\mathrm{ets}}_{1}$ in Theorem \ref{rightinduceeyebrow}. If (2) holds, then $X \rightarrow \Delta[0]$ is a fibration in ${\mathcal{SS}\mathrm{ets}}_{1}$, so it has the right lifting property with respect to the set ${R}_{1}(\widetilde{J})$ because every map in ${R}_{1}(\widetilde{J})$ is an acyclic cofibration in ${\mathcal{SS}\mathrm{ets}}_{1}$. Lastly, suppose (3) holds. We prove that (1) holds. For $n \geq 1$ and $0 \leq k \leq n$, a lifting problem
\[\begin{tikzcd}[row sep=3mm, column sep=3mm, ampersand replacement=\&]
	{\Lambda[n, k]} \&\& X \\
	\\
	{\Delta[n]}
	\arrow["h", from=1-1, to=1-3]
	\arrow["{j_{n, k}}"', from=1-1, to=3-1]
	\arrow["g", dashed, from=3-1, to=1-3]
\end{tikzcd}\]
is equivalent, under the adjunction ${R}_{1} \colon {\mathcal{SS}\mathrm{ets}} \rightleftarrows {\mathcal{SS}\mathrm{ets}}_{1} \colon U$ from Proposition \ref{notquite}, to a lifting problem
\[\begin{tikzcd}[row sep=3mm, column sep=3mm, ampersand replacement=\&]
	{{R}_{1}(\Lambda[n, k])} \&\& X \\
	\\
	{{R}_{1}(\Delta[n]).}
	\arrow["\eta", from=1-1, to=1-3]
	\arrow["{{R}_{1}(j_{n, k})}"', from=1-1, to=3-1]
	\arrow["\psi", dashed, from=3-1, to=1-3]
\end{tikzcd}\]
In light of Lemma \ref{sadcarefree}, we split our solution in three separate cases. If $n=1$, then Lemma \ref{sadcarefree}(1) says that ${R}_{1}(j_{1, k})$ is the identity map of the point, so our lifting problem can be solved by setting $\psi=\eta$. If $n \geq 3$, then our lifting problem can be solved as we are assuming that $X \rightarrow \Delta[0]$ has the right lifting property with respect to the set ${R}_{1}(\widetilde{J})$. Lastly, if $n=2$, then Lemma \ref{sadcarefree}(2) says that our lifting problem is
\[\begin{tikzcd}[row sep=3mm, column sep=3mm, ampersand replacement=\&]
	{\Delta[0]} \&\& X \\
	\\
	{S_{\mathrm{simp}}^{2},}
	\arrow["{x_{0}}", from=1-1, to=1-3]
	\arrow["{{R}_{1}(j_{2, k})}"', from=1-1, to=3-1]
	\arrow["\psi", dashed, from=3-1, to=1-3]
\end{tikzcd}\]
where $x_{0}$ denotes the unique $0$-simplex of $X$, so we take $\psi$ to be the constant map at $x_{0}$.
\end{proof}
In fact, a different and more elaborate argument yields a stronger result.

\begin{proposition}[{\cite[Part II, Proposition 2.12]{Quillen1969}}]\label{quillendidit} Let $f \colon X \rightarrow Y$ be a map of $1$-reduced simplicial sets. Suppose that $Y$ is a Kan complex. Then, $f$ is a fibration in ${\mathcal{SS}\mathrm{ets}}_{1}$ if and only if $f$ has the right lifting property with respect to the set ${R}_{1}(\widetilde{J})$. Hence, the set ${R}_{1}(\widetilde{J})$ detects fibrations in ${\mathcal{SS}\mathrm{ets}}_{1}$ whose target is a Kan complex.
\end{proposition}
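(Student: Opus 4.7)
The direction $(1) \Rightarrow (2)$ is immediate from Lemma \ref{combinebadnews}: every map in $R_{1}(\widetilde{J})$ is an acyclic cofibration in ${\mathcal{SS}\mathrm{ets}}_{1}$, and fibrations have the right lifting property against acyclic cofibrations.

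For the converse $(2) \Rightarrow (1)$, the plan is to exhibit $f$ as a retract of a fibration. Since ${\mathcal{SS}\mathrm{ets}}_{1}$ is locally presentable, I would apply the small object argument to $R_{1}(\widetilde{J})$ to factor $f$ as $X \xrightarrow{i} Z \xrightarrow{p} Y$, with $i$ a relative $R_{1}(\widetilde{J})$-cell complex and $p$ having the right lifting property against $R_{1}(\widetilde{J})$. As acyclic cofibrations in ${\mathcal{SS}\mathrm{ets}}_{1}$ are closed under pushouts and transfinite compositions, $i$ is an acyclic cofibration. Moreover, since $f$ has the right lifting property against $R_{1}(\widetilde{J})$ by hypothesis, it has the right lifting property against every transfinite composition of pushouts of maps in $R_{1}(\widetilde{J})$, hence against $i$. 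Thus the square
\[\begin{tikzcd}[row sep=3mm, column sep=3mm, ampersand replacement=\&]
	X \&\& X \\
	\\
	Z \&\& Y
	\arrow["\mathrm{id}", from=1-1, to=1-3]
	\arrow["i"', from=1-1, to=3-1]
	\arrow["f", from=1-3, to=3-3]
	\arrow["p"', from=3-1, to=3-3]
	\arrow["r", dashed, from=3-1, to=1-3]
\end{tikzcd}\]
admits a diagonal filler $r$, which exhibits $f$ as a retract of $p$ in the arrow category.

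I would next propagate the Kan property from $Y$ to $Z$. Given any lifting problem for a map in $R_{1}(\widetilde{J})$ against $Z \to \Delta[0]$, composing with $p$ yields a lifting problem against $Y \to \Delta[0]$, which is solvable since $Y$ is a Kan complex by Proposition \ref{quasicataddendum}; the right lifting property of $p$ against $R_{1}(\widetilde{J})$ then pulls this solution back to $Z$, so $Z$ is also a Kan complex.

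The main obstacle is completing the argument by showing that $p \colon Z \to Y$ is a fibration in ${\mathcal{SS}\mathrm{ets}}_{1}$; once established, $f$ as a retract of $p$ is also a fibration. The difficulty lies in the fact that $R_{1}(\widetilde{J})$ omits the horn inclusions in dimensions $1$ and $2$, so the right lifting property against $R_{1}(\widetilde{J})$ alone does not obviously imply the full lifting property defining a fibration in ${\mathcal{SS}\mathrm{ets}}_{1}$. Resolving this requires a delicate combinatorial analysis of how $2$-dimensional data lifts along maps between $1$-reduced Kan complexes, which is the technical heart of Quillen's original proof.
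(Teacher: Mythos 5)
There is a genuine gap, and you have in fact flagged it yourself. Your direction $(1) \Rightarrow (2)$ is fine and immediate from Lemma \ref{combinebadnews}. But for $(2) \Rightarrow (1)$ the retract maneuver does not advance the problem: after applying the small object argument to $R_{1}(\widetilde{J})$, the map $p \colon Z \rightarrow Y$ you produce has exactly the same hypotheses as $f$ — it has the right lifting property with respect to $R_{1}(\widetilde{J})$ and its target $Y$ is a Kan complex (your observation that $Z$ is also Kan is correct, via Proposition \ref{quasicataddendum}, but it is not used anywhere). So the assertion ``$p$ is a fibration in ${\mathcal{SS}\mathrm{ets}}_{1}$'' is precisely an instance of the statement you are trying to prove, and you explicitly leave it unproven. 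Nothing in your argument uses the Kan hypothesis on $Y$ in an essential way, yet that hypothesis must carry real weight: Conjecture \ref{nooneknows} records the expectation that $R_{1}(\widetilde{J})$ is \emph{not} a generating set of acyclic cofibrations for ${\mathcal{SS}\mathrm{ets}}_{1}$, so the right lifting property against $R_{1}(\widetilde{J})$ alone cannot be upgraded to the fibration property by purely formal closure arguments of the kind you invoke; any correct proof must exploit the fibrancy of the target combinatorially.

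For comparison, the paper does not reprove this result at all: it states it with a citation to Quillen's original argument (\cite[Part II, Proposition 2.12]{Quillen1969}) and remarks that the fibrancy condition on $Y$ is used crucially there. So the ``delicate combinatorial analysis'' you defer to is exactly the content that the citation carries; in a blind proof it is the part that would have to be supplied, and without it the proposal establishes only the easy implication.
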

Thus, there exist no pathological fibrations in ${\mathcal{SS}\mathrm{ets}}_{1}$ so long as the target is a Kan complex. The additional fibrancy condition on the target is crucially used in Quillen's proof, which suggests that ${R}_{1}(\widetilde{J})$ is not a generating set of acyclic cofibrations for ${\mathcal{SS}\mathrm{ets}}_{1}$. In his discussion of \cite[Part II, Proposition 2.12]{Quillen1969}, Quillen affirms this suggestion and provides a hint toward showing that the fibrancy condition on the target in Proposition \ref{quillendidit} is necessary. However, to the best of our knowledge, the question of whether ${R}_{1}(\widetilde{J})$ is a generating set of acyclic cofibrations for ${\mathcal{SS}\mathrm{ets}}_{1}$ remains open. Our various attempts at producing a map in ${\mathcal{SS}\mathrm{ets}}_{1}$ that has the right lifting property with respect to ${R}_{1}(\widetilde{J})$ but is not a fibration in ${\mathcal{SS}\mathrm{ets}}_{1}$ have been unsuccessful in the same fashion: our candidate maps are not fibrations in ${\mathcal{SS}\mathrm{ets}}_{1}$, but they also fail to have the right lifting property with respect to ${R}_{1}(\widetilde{J})$. We pose the following conjecture to the community.

\begin{conjecture}\label{nooneknows} Let ${R}_{1} \colon {\mathcal{SS}\mathrm{ets}} \rightarrow {\mathcal{SS}\mathrm{ets}}_{1}$ denote the $1$-reduction functor. The set \[{R}_{1}(\widetilde{J})=\{{R}_{1}(j_{n, k}) \colon {R}_{1}(\Lambda[n, k]) \rightarrow {R}_{1}(\Delta[n]) \mid n \geq 3 \textrm{ and } 0 \leq k \leq n\}\] is not a generating set of acyclic cofibrations for ${\mathcal{SS}\mathrm{ets}}_{1}$.
\end{conjecture}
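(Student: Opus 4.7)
The plan is to exhibit a map $f \colon X \to Y$ of $1$-reduced simplicial sets that has the right lifting property with respect to $R_1(\widetilde{J})$ but fails to be a fibration in ${\mathcal{SS}\mathrm{ets}}_1$; such a counterexample immediately rules out $R_1(\widetilde{J})$ as a generating set of acyclic cofibrations. By Proposition \ref{quillendidit}, the target $Y$ must not be a Kan complex, so the search naturally focuses on $Y$ with an unfillable horn, and especially on $2$-dimensional horns: Lemma \ref{sadcarefree}(2) shows that $R_1$ collapses each $j_{2,k}$ to the basepoint inclusion $\Delta[0] \to S^{2}_{\mathrm{simp}}$, so any obstruction to fibrancy concentrated at the $2$-dimensional level is invisible to $R_1(\widetilde{J})$.

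Concretely, I would first choose a $1$-reduced simplicial set $Y$ with a specified unfillable $2$-horn, then build $X$ as a ``partial fibrant replacement'' over $Y$ by iteratively solving lifting problems against $R_1(\widetilde{J})$ via the small object argument, and let $f \colon X \to Y$ be the resulting structure map. By construction, $f$ has the right lifting property against $R_1(\widetilde{J})$. The remaining task is to certify that $f$ is not a fibration in ${\mathcal{SS}\mathrm{ets}}_1$ by producing an explicit acyclic cofibration $A \hookrightarrow B$ --- a monomorphism of $1$-reduced simplicial sets that is also a weak homotopy equivalence --- against which $f$ fails to lift. An alternative, more abstract route to the same conclusion would be to identify some numerical or homotopical invariant preserved by every map in the saturation of $R_1(\widetilde{J})$ under pushouts, transfinite compositions, and retracts, and then exhibit an acyclic cofibration in ${\mathcal{SS}\mathrm{ets}}_1$ that alters this invariant.

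The main obstacle, as Quillen hints in the discussion following \cite[Part II, Proposition 2.12]{Quillen1969} and as the author's own unsuccessful attempts corroborate, is exactly to produce such a separating acyclic cofibration. The most natural candidates --- $1$-reductions of anodyne extensions in ${\mathcal{SS}\mathrm{ets}}_{\ast}$, or smash products $K \wedge g$ of a $1$-reduced $K$ with a pointed acyclic cofibration $g$ as permitted by Lemma \ref{smashingtensor} --- tend either to already lie in the saturation of $R_1(\widetilde{J})$ or to be lifted against by $f$ despite the $2$-dimensional obstruction in $Y$. A successful proof will likely require a structural characterization of the acyclic cofibrations in ${\mathcal{SS}\mathrm{ets}}_1$ that isolates the information about $2$-simplices lost under $R_1$; indeed, one cannot at present rule out the alternative possibility that the conjecture is false, in which case the proof proposal above would instead need to be inverted into an argument that every acyclic cofibration in ${\mathcal{SS}\mathrm{ets}}_1$ lies in the saturation of $R_1(\widetilde{J})$.
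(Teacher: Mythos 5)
There is no proof of this statement to compare against: the paper states it as an open conjecture (Conjecture \ref{nooneknows}), records that all of the author's attempts at a counterexample failed in the same way (candidate maps were not fibrations in ${\mathcal{SS}\mathrm{ets}}_{1}$, but also failed to have the right lifting property with respect to ${R}_{1}(\widetilde{J})$), and explicitly leaves the question to the community. Your proposal must therefore stand on its own as a proof, and it does not: it is a strategy outline whose decisive steps are absent, as you yourself acknowledge in the final sentence, where you concede that the conjecture might even be false.

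Concretely, two gaps remain. First, no counterexample map $f$ is ever produced. Running the small object argument against ${R}_{1}(\widetilde{J})$ over a non-Kan target $Y$ does yield a map with the required lifting property, but you give no mechanism for certifying that this map fails to be a fibration in ${\mathcal{SS}\mathrm{ets}}_{1}$, and this is exactly where the difficulty lies: by Proposition \ref{litmuscor}, a map of $1$-reduced simplicial sets can fail to be a Kan fibration (for instance because of an unfillable $2$-horn) and still be a fibration in ${\mathcal{SS}\mathrm{ets}}_{1}$, so the ``$2$-dimensional obstruction in $Y$'' that Lemma \ref{sadcarefree}(2) makes invisible to ${R}_{1}(\widetilde{J})$ is not, by itself, a witness of non-fibrancy. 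What is needed is either an explicit acyclic cofibration in ${\mathcal{SS}\mathrm{ets}}_{1}$ against which the constructed $f$ demonstrably fails to lift, or an invariant preserved by the saturation of ${R}_{1}(\widetilde{J})$ but violated by some acyclic cofibration; you name both options but supply neither. Second, the correct observations you do make --- that Proposition \ref{quillendidit} forces any counterexample to have non-Kan target, and that the obstruction must be concentrated in dimension $2$ --- merely reproduce the diagnosis already given in Section \ref{section4.3} of the paper, so the proposal contains no progress beyond the paper's own discussion of why the question is open.
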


\subsection{Pathological fibrations of $1$-reduced simplicial sets}\label{section4.4}
The issues that we ran into in our study of acyclic cofibrations in ${\mathcal{SS}\mathrm{ets}}_{1}$ suggest that there exist fibrations in ${\mathcal{SS}\mathrm{ets}}_{1}$ that are dissimilar to Kan fibrations. To investigate this phenomenon, we characterize when a fibration in ${\mathcal{SS}\mathrm{ets}}_{1}$ is a Kan fibration. In fact, we produce two results that help us find an explicit example of a fibration in ${\mathcal{SS}\mathrm{ets}}_{1}$ that is not a Kan fibration. To begin with, we note that every Kan fibration of $1$-reduced simplicial sets is a fibration in ${\mathcal{SS}\mathrm{ets}}_{1}$.

\begin{proposition}\label{kanimpliesfibssetr} If $f$ is a Kan fibration of $1$-reduced simplicial sets, then $f$ is a fibration in ${\mathcal{SS}\mathrm{ets}}_{1}$.
\end{proposition}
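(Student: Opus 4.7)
The plan is to unpack the definitions: a fibration in $\mathcal{SS}\mathrm{ets}_{1}$ is, by definition, a map with the right lifting property against every acyclic cofibration of $\mathcal{SS}\mathrm{ets}_{1}$, and the key observation is that every such acyclic cofibration is simultaneously an acyclic cofibration in $\mathcal{SS}\mathrm{ets}$ in the standard sense. Given that Kan fibrations are by definition fibrations in $\mathcal{SS}\mathrm{ets}$, the result should follow by transporting diagonal lifts from $\mathcal{SS}\mathrm{ets}$ back to $\mathcal{SS}\mathrm{ets}_{1}$ via the fully faithful forgetful functor $U \colon \mathcal{SS}\mathrm{ets}_{1} \to \mathcal{SS}\mathrm{ets}$.

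In more detail, by Theorem \ref{rightinduceeyebrow} an acyclic cofibration $i \colon A \to B$ in $\mathcal{SS}\mathrm{ets}_{1}$ is precisely a monomorphism of $1$-reduced simplicial sets that is also a weak homotopy equivalence. Forgetting the $1$-reduced structure, $i$ is a monomorphism and a weak homotopy equivalence in $\mathcal{SS}\mathrm{ets}$, hence an acyclic cofibration in the standard model structure by Example \ref{defmodelcatdef1}. Now let $f \colon X \to Y$ be a Kan fibration between $1$-reduced simplicial sets. Example \ref{defmodelcatdef1} identifies $f$ with a fibration in $\mathcal{SS}\mathrm{ets}$, and so $f$ has the right lifting property against $i$ in $\mathcal{SS}\mathrm{ets}$. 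The resulting diagonal lift $B \to X$ has both source and target $1$-reduced, so it is automatically a morphism of $\mathcal{SS}\mathrm{ets}_{1}$ and solves the original lifting problem there. Since $i$ was arbitrary, $f$ is a fibration in $\mathcal{SS}\mathrm{ets}_{1}$.

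There is no real obstacle: the entire argument is a translation between the two model structures, enabled by the fact that the classes of cofibrations and of weak equivalences are described by the same conditions in $\mathcal{SS}\mathrm{ets}_{1}$ and in $\mathcal{SS}\mathrm{ets}$, and by the fact that $U$ is fully faithful so that lifting diagrams and their solutions transport verbatim. The asymmetric nature of the proposition (one direction rather than both) is the point emphasized in the remainder of Section \ref{section4}: the converse fails in general, which is precisely what makes the fibrations of $\mathcal{SS}\mathrm{ets}_{1}$ strictly richer than the Kan fibrations and motivates the subsequent discussion.
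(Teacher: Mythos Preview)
Your proof is correct and follows the same approach as the paper: both arguments observe that the acyclic cofibrations of $\mathcal{SS}\mathrm{ets}_{1}$ form a subclass of the acyclic cofibrations of $\mathcal{SS}\mathrm{ets}$, so any Kan fibration (which lifts against the latter) automatically lifts against the former. Your version simply spells out in more detail why a lift found in $\mathcal{SS}\mathrm{ets}$ is already a morphism of $\mathcal{SS}\mathrm{ets}_{1}$, which the paper leaves implicit.
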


\begin{proof} If $f$ is a Kan fibration, then $f$ has the right lifting property with respect to the class of acyclic cofibrations in $\mathcal{SS}\mathrm{ets}$, which contains the class of acyclic cofibrations in ${\mathcal{SS}\mathrm{ets}}_{1}$. Thus, $f$ is a fibration in ${\mathcal{SS}\mathrm{ets}}_{1}$.
\end{proof}
We prove a characterization that detects when a fibration in ${\mathcal{SS}\mathrm{ets}}_{1}$ is a Kan fibration and helps us disprove the converse of Proposition \ref{kanimpliesfibssetr}; our result is the $1$-reduced analog of \cite[Lemma 6.6]{GoerssJardine1999}. Recall from Lemma \ref{sadcarefree} that, for $0 \leq k \leq 2$, the map ${R}_{1}(j_{2, k}) \colon \Delta[0] \rightarrow S_{\mathrm{simp}}^{2}$ picks the unique $0$-simplex of the simplicial $2$-sphere.

\begin{proposition}\label{litmus} Let $f$ be a fibration in ${\mathcal{SS}\mathrm{ets}}_{1}$. For $0 \leq k \leq 2$, let $j_{2, k} \colon \Lambda[2, k] \rightarrow \Delta[2]$ denote the horn inclusion. Then, $f$ is a Kan fibration if and only if $f$ has the right lifting property against ${R}_{1}(j_{2, k})$ for $0 \leq k \leq 2$.
\end{proposition}

\begin{proof} Under the adjunction ${R}_{1} \colon {\mathcal{SS}\mathrm{ets}} \rightleftarrows {\mathcal{SS}\mathrm{ets}}_{1} \colon U$ from Proposition \ref{notquite}, a lifting problem of the type
\begin{equation}\tag{I}
\begin{tikzcd}[row sep=3mm, column sep=3mm, ampersand replacement=\&]
	{\Delta[0]} \&\& X \\
	\\
	{S_{\mathrm{simp}}^{2}} \&\& {Y,}
	\arrow[from=1-1, to=1-3]
	\arrow["{{R}_{1}(j_{2, k})}"', from=1-1, to=3-1]
	\arrow["f"', from=1-3, to=3-3]
	\arrow[dashed, from=3-1, to=1-3]
	\arrow[from=3-1, to=3-3]
\end{tikzcd}
\end{equation}
is equivalent to a lifting problem of the type
\begin{equation}\tag{II}
\begin{tikzcd}[row sep=3mm, column sep=3mm, ampersand replacement=\&]
	{\Lambda[2, k]} \&\& X \\
	\\
	{\Delta[2]} \&\& {Y.}
	\arrow[from=1-1, to=1-3]
	\arrow["{j_{2, k}}"', from=1-1, to=3-1]
	\arrow["f"', from=1-3, to=3-3]
	\arrow[dashed, from=3-1, to=1-3]
	\arrow[from=3-1, to=3-3]
\end{tikzcd}
\end{equation}
If $f$ is a Kan fibration, then every lifting problem of type (II) has a solution. Thus, every lifting problem of type (I) has a solution, and $f$ has the right lifting property with respect to ${R}_{1}(j_{2, k})$ for $0 \leq k \leq 2$.

Conversely, if $f$ has the right lifting property with respect to ${R}_{1}(j_{2, k})$ for $0 \leq k \leq 2$, then every lifting problem of type (II) has a solution. If $n \neq 2$ and $0 \leq t \leq n$, then solving a lifting problem
\[\begin{tikzcd}[row sep=3mm, column sep=3mm, ampersand replacement=\&]
	{\Lambda[n, t]} \&\& X \\
	\\
	{\Delta[n]} \&\& {Y}
	\arrow[from=1-1, to=1-3]
	\arrow["{j_{n, t}}"', from=1-1, to=3-1]
	\arrow["f"', from=1-3, to=3-3]
	\arrow[dashed, from=3-1, to=1-3]
	\arrow[from=3-1, to=3-3]
\end{tikzcd}\]
is equivalent, under the adjunction ${R}_{1} \colon {\mathcal{SS}\mathrm{ets}} \rightleftarrows {\mathcal{SS}\mathrm{ets}}_{1} \colon U$ from Proposition \ref{notquite}, to solving a lifting problem
\[\begin{tikzcd}[row sep=3mm, column sep=3mm, ampersand replacement=\&]
	{{R}_{1}(\Lambda[n, t])} \&\& X \\
	\\
	{{R}_{1}(\Delta[n])} \&\& {Y.}
	\arrow[from=1-1, to=1-3]
	\arrow["{{R}_{1}(j_{n, t})}"', from=1-1, to=3-1]
	\arrow["f"', from=1-3, to=3-3]
	\arrow[dashed, from=3-1, to=1-3]
	\arrow[from=3-1, to=3-3]
\end{tikzcd}\]
By Lemma \ref{combinebadnews}, ${R}_{1}(j_{n, t})$ is an acyclic cofibration in ${\mathcal{SS}\mathrm{ets}}_{1}$ because $n \neq 2$, so our lifting problem has a solution because $f$ is a fibration in ${\mathcal{SS}\mathrm{ets}}_{1}$. Overall, we conclude that $f$ is a Kan fibration.
\end{proof}
Proposition \ref{litmus} helps us find a fibration in ${\mathcal{SS}\mathrm{ets}}_{1}$ that bears no resemblance to a Kan fibration. To do so, we need an auxiliary lemma about maps from a $1$-reduced simplicial set to the simplicial $2$-sphere $S_{\mathrm{simp}}^{2}$.

\begin{lemma}\label{nocontradiction} Let $B$ be a $1$-reduced simplicial set. Then, a map $h \colon B \rightarrow S_{\mathrm{simp}}^{2}$ is constant if and only if $h_{\ast} \colon \pi_{2}(B) \rightarrow \pi_{2}(S_{\mathrm{simp}}^{2})$ is not an epimorphism.
\end{lemma}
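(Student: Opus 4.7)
The plan is to handle the two directions separately. For the forward direction, if $h$ is constant, then $h$ factors through $\Delta[0]$, whose realization has trivial $\pi_{2}$; thus $h_{\ast}$ is the zero map, which cannot be surjective onto $\pi_{2}(S_{\mathrm{simp}}^{2}) \cong \mathbb{Z}$. This takes one or two lines.

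For the converse, I argue the contrapositive: assuming $h$ is not constant, I will produce an element of $\pi_{2}(B)$ whose image under $h_{\ast}$ is a generator of $\pi_{2}(S_{\mathrm{simp}}^{2})$. The first step is to observe that the non-degenerate simplices of $S_{\mathrm{simp}}^{2} = \Delta[2]/\partial\Delta[2]$ are only the basepoint $x_{0}$ and the fundamental $2$-simplex $\sigma_{2}$. Since every subcomplex of $S_{\mathrm{simp}}^{2}$ is determined by its non-degenerate simplices, the only subcomplexes of $S_{\mathrm{simp}}^{2}$ are $\emptyset$, the constant subcomplex at $x_{0}$, and $S_{\mathrm{simp}}^{2}$ itself. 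Applying this to the image of $h$, which is non-empty because $B$ is non-empty, the non-constancy of $h$ forces $\sigma_{2}$ to lie in the image. In particular, there is a $2$-simplex $\sigma \in B_{2}$ with $h(\sigma) = \sigma_{2}$.

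The next step uses Corollary \ref{notquitesimplices} to reinterpret $\sigma \colon \Delta[2] \to B$ as a pointed map $\tilde{\sigma} \colon S_{\mathrm{simp}}^{2} \to B$. Under the adjunction from Lemma \ref{notquite} (applied via Corollary \ref{notquitesimplices}), the composite $h \circ \tilde{\sigma} \colon S_{\mathrm{simp}}^{2} \to S_{\mathrm{simp}}^{2}$ corresponds to $h \circ \sigma \colon \Delta[2] \to S_{\mathrm{simp}}^{2}$. But $h \circ \sigma$ selects the $2$-simplex $\sigma_{2}$, so it is the quotient map $\Delta[2] \to \Delta[2]/\partial\Delta[2]$, which is precisely the unit of the adjunction at $\Delta[2]$ by Example \ref{defreductiondef2}. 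Consequently, its adjoint is $\mathrm{id}_{S_{\mathrm{simp}}^{2}}$, so $h \circ \tilde{\sigma} = \mathrm{id}_{S_{\mathrm{simp}}^{2}}$. Passing to geometric realizations and then to $\pi_{2}$, we obtain $h_{\ast}[|\tilde{\sigma}|] = [\mathrm{id}_{S^{2}}]$, which generates $\pi_{2}(S_{\mathrm{simp}}^{2})$. This yields surjectivity of $h_{\ast}$.

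The main obstacle I anticipate is just articulating cleanly the structural fact about subcomplexes of $S_{\mathrm{simp}}^{2}$ that forces a map missing $\sigma_{2}$ on $2$-simplices to be constant in all higher dimensions; this relies on the closure of subcomplexes under faces and degeneracies together with the classification of non-degenerate simplices of $S_{\mathrm{simp}}^{2}$. Everything else is a formal application of the $R_{1} \dashv F$ adjunction and functoriality of $\pi_{2}$.
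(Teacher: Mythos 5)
Your proposal is correct and follows essentially the same route as the paper: the constant direction is immediate, and for the converse you extract a $2$-simplex of $B$ hitting the non-degenerate $2$-simplex of $S_{\mathrm{simp}}^{2}$, view it via Corollary \ref{notquitesimplices} as a map $S_{\mathrm{simp}}^{2} \rightarrow B$, and observe that its composite with $h$ represents a generator of $\pi_{2}(S_{\mathrm{simp}}^{2}) \cong \mathbb{Z}$. Your subcomplex-classification justification of the existence of such a $2$-simplex and your explicit identification of the adjoint composite with $\operatorname{id}_{S_{\mathrm{simp}}^{2}}$ merely spell out steps the paper leaves implicit.
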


\begin{proof} If $h$ is constant, then so is $h_{\ast}$. If $h$ is not constant, then there is a $2$-simplex $\omega$ of $B$ such that $h(\omega)$ is the unique non-degenerate $2$-simplex $\epsilon$ of $S_{\mathrm{simp}}^{2}$. As $B$ is $1$-reduced, Corollary \ref{notquitesimplices} allows us to view $\omega$ as a map $\omega \colon S_{\mathrm{simp}}^{2} \rightarrow B$. Then, we compute that $h_{\ast}([\omega])=[h(\omega)]=[\epsilon]$, which is a generator of $\pi_{2}(S_{\mathrm{simp}}^{2}) \cong \mathbb{Z}$.
\end{proof}

\begin{proposition}\label{litmuscor} For $0 \leq k \leq 2$, the map of $1$-reduced simplicial sets ${R}_{1}(j_{2, k}) \colon \Delta[0] \rightarrow S_{\mathrm{simp}}^{2}$ is a fibration in ${\mathcal{SS}\mathrm{ets}}_{1}$ but not a Kan fibration.
\end{proposition}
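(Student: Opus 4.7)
The plan is to treat the two assertions separately. For the failure to be a Kan fibration, I would apply Proposition \ref{litmus}: it suffices to exhibit a single lifting problem against ${R}_{1}(j_{2,k})$ itself that admits no solution. Taking both horizontal maps to be identities produces exactly such a problem, since any candidate lift $\psi \colon S_{\mathrm{simp}}^{2} \to \Delta[0]$ must be the unique (constant) map, and thus its composite with ${R}_{1}(j_{2,k})$ is the constant map at the basepoint rather than the identity of $S_{\mathrm{simp}}^{2}$.

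For the harder direction, I would verify directly that ${R}_{1}(j_{2,k})$ has the right lifting property with respect to every acyclic cofibration $i \colon A \to B$ in ${\mathcal{SS}\mathrm{ets}}_{1}$. Given such a square with bottom map $h \colon B \to S_{\mathrm{simp}}^{2}$, the only possible lift is the unique map $\psi \colon B \to \Delta[0]$, and $\psi$ is a genuine lift precisely when $h$ is constant at the basepoint. The entire argument therefore reduces to showing that $h$ must be constant.

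The key step, and the main technical point, is to deduce constancy of $h$ from a homotopical computation on $\pi_{2}$. By commutativity of the square, $h \circ i$ factors through $\Delta[0]$, so $(h \circ i)_{*} \colon \pi_{2}(A) \to \pi_{2}(S_{\mathrm{simp}}^{2})$ is zero. Because $i$ is an acyclic cofibration in ${\mathcal{SS}\mathrm{ets}}_{1}$, it is a weak homotopy equivalence, so $i_{*}$ is an isomorphism on $\pi_{2}$; hence $h_{*} \colon \pi_{2}(B) \to \pi_{2}(S_{\mathrm{simp}}^{2})$ is also zero. Since $\pi_{2}(S_{\mathrm{simp}}^{2}) \cong \mathbb{Z}$ is nonzero, the map $h_{*}$ fails to be an epimorphism, and Lemma \ref{nocontradiction} (applicable because $B$ is $1$-reduced) then forces $h$ to be constant, producing the desired lift.
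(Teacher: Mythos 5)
Your proposal is correct and follows essentially the same route as the paper: for the fibration claim you reduce to showing the bottom map $h$ is constant via the $\pi_{2}$ computation ($h_{\ast}$ vanishes because it factors, up to the isomorphism induced by the acyclic cofibration, through $\pi_{2}(\Delta[0])$) and then invoke Lemma \ref{nocontradiction}, exactly as the paper does. The non-Kan-fibration claim via the self-lifting problem through Proposition \ref{litmus} is also the paper's argument.
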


\begin{proof} We first show that ${R}_{1}(j_{2, k})$ is a fibration in ${\mathcal{SS}\mathrm{ets}}_{1}$. For an acyclic cofibration $\gamma \colon A \rightarrow B$ in ${\mathcal{SS}\mathrm{ets}}_{1}$ and a lifting problem
\[\begin{tikzcd}[row sep=3mm, column sep=3mm, ampersand replacement=\&]
	A \&\&\& {\Delta[0]} \\
	\\
	B \&\&\& {S_{\mathrm{simp}}^{2},}
	\arrow["c", from=1-1, to=1-4]
	\arrow["\gamma"', from=1-1, to=3-1]
	\arrow["{{R}_{1}(j_{2, k})}", from=1-4, to=3-4]
	\arrow["g", dashed, from=3-1, to=1-4]
	\arrow["h", from=3-1, to=3-4]
\end{tikzcd}\]
we claim that the unique option for $g$ solves our lifting problem. The upper left triangle has to commute. To show that the lower right triangle also commutes, we show that $h$ is constant. By Lemma \ref{nocontradiction}, we show that $h_{\ast} \colon \pi_{2}(B) \rightarrow \pi_{2}(S_{\mathrm{simp}}^{2})$ is not an epimorphism; in fact, we show that $h_{\ast}$ is constant. As $c_{\ast}$ is constant, so is \[h_{\ast}\gamma_{\ast}=(h\gamma)_{\ast}=({{R}_{1}(j_{2, k})}c)_{\ast}={{R}_{1}(j_{2, k})}_{\ast}c_{\ast}.\] Because $\gamma$ is an acyclic cofibration in ${\mathcal{SS}\mathrm{ets}}_{1}$, we know that $\gamma_{\ast}$ is an isomorphism. Therefore, $h_{\ast}$ is constant.

Now, we apply the characterization from Proposition \ref{litmus} to argue that ${R}_{1}(j_{2, k})$ is not a Kan fibration because it does not have the right lifting property with respect to itself. Indeed, the lifting problem
\[\begin{tikzcd}[row sep=3mm, column sep=3mm, ampersand replacement=\&]
	{\Delta[0]} \&\&\& {\Delta[0]} \\
	\\
	{S_{\mathrm{simp}}^{2}} \&\&\& {S_{\mathrm{simp}}^{2}}
	\arrow[Rightarrow, no head, from=1-1, to=1-4]
	\arrow["{{R}_{1}(j_{2, k})}"', from=1-1, to=3-1]
	\arrow["{{R}_{1}(j_{2, k})}", from=1-4, to=3-4]
	\arrow["g", dashed, from=3-1, to=1-4]
	\arrow[Rightarrow, no head, from=3-1, to=3-4]
\end{tikzcd}\]
has no solution because the unique option for $g$ does not make the lower right triangle commute.
\end{proof}
In the presence of a fibrancy condition on the target, Proposition \ref{litmus} can be strengthened. Namely, we establish the following characterization result, which is the $1$-reduced analog of \cite[Corollary 6.9]{GoerssJardine1999}.

\begin{proposition}\label{litmussuper} Let $f \colon X \rightarrow Y$ be a fibration in ${\mathcal{SS}\mathrm{ets}}_{1}$. Suppose that $Y$ is a Kan complex. Then, $X$ is also a Kan complex, and the following statements are equivalent.

\begin{enumerate}

\item The map $f \colon X \rightarrow Y$ is a Kan fibration.

\item The map $f \colon X \rightarrow Y$ has the right lifting property with respect to ${R}_{1}(j_{2, k})$ for $0 \leq k \leq 2$.

\item The map $f \colon X \rightarrow Y$ induces an epimorphism $f_{\ast} \colon \pi_{2}(X) \rightarrow \pi_{2}(Y)$.

\end{enumerate}

\end{proposition}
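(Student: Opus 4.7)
The plan is to first verify that $X$ is itself a Kan complex and then to extend Proposition \ref{litmus} by relating the lifting property against $R_{1}(j_{2,k})$ to the surjectivity of $f_{\ast}$ on $\pi_{2}$. That $X$ is a Kan complex is immediate: since $Y$ is a Kan complex, the unique map $Y \to \Delta[0]$ is a fibration in ${\mathcal{SS}\mathrm{ets}}_{1}$ by Theorem \ref{rightinduceeyebrow}, so composing with $f$ shows $X \to \Delta[0]$ is also a fibration in ${\mathcal{SS}\mathrm{ets}}_{1}$, whence $X$ is a Kan complex by the same theorem. The equivalence (1) $\Leftrightarrow$ (2) is furnished verbatim by Proposition \ref{litmus}, so only (2) $\Leftrightarrow$ (3) remains.

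For (2) $\Rightarrow$ (3), let $\alpha \in \pi_{2}(Y)$ and represent it by a $2$-simplex $y$ of the Kan complex $Y$. Since $Y$ is $1$-reduced, every boundary $1$-simplex of $y$ is necessarily the degeneracy of the unique $0$-simplex, so $y$ factors through the quotient $\Delta[2] \to S_{\mathrm{simp}}^{2}$, giving a map $h \colon S_{\mathrm{simp}}^{2} \to Y$ with $[h] = \alpha$. The lifting problem
\[\begin{tikzcd}[row sep=3mm, column sep=3mm, ampersand replacement=\&]
{\Delta[0]} \&\& X \\
\\
{S_{\mathrm{simp}}^{2}} \&\& {Y}
\arrow[from=1-1, to=1-3]
\arrow["{R_{1}(j_{2,k})}"', from=1-1, to=3-1]
\arrow["f", from=1-3, to=3-3]
\arrow["h"', from=3-1, to=3-3]
\arrow["g", dashed, from=3-1, to=1-3]
\end{tikzcd}\]
admits a solution $g$ by (2), and $f_{\ast}[g] = [fg] = [h] = \alpha$, so $f_{\ast}$ surjects onto $\pi_{2}(Y)$.

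For (3) $\Rightarrow$ (2), I would start with a lifting problem of the form above and observe that the upper triangle of any candidate solution commutes automatically because $X$ is $1$-reduced. Using surjectivity of $f_{\ast}$, choose $g' \colon S_{\mathrm{simp}}^{2} \to X$ with $f_{\ast}[g'] = [h]$; then $fg'$ and $h$ represent the same class in $\pi_{2}(Y)$, so they are pointed homotopic via some $H \colon S_{\mathrm{simp}}^{2} \wedge \Delta[1]_{+} \to Y$ with $H|_{0} = fg'$ and $H|_{1} = h$. The cylinder inclusion $i_{0} \colon S_{\mathrm{simp}}^{2} \to S_{\mathrm{simp}}^{2} \wedge \Delta[1]_{+}$ is a monomorphism and a weak homotopy equivalence with $1$-reduced target (by Lemma \ref{smashingtensor}), hence an acyclic cofibration in ${\mathcal{SS}\mathrm{ets}}_{1}$. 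Since $f$ is a fibration in ${\mathcal{SS}\mathrm{ets}}_{1}$, the pair $(g', H)$ admits a common lift $\tilde{H} \colon S_{\mathrm{simp}}^{2} \wedge \Delta[1]_{+} \to X$, and setting $g = \tilde{H}|_{1}$ yields $fg = h$, as required. The main point of care is verifying the acyclic cofibration status of $i_{0}$ in ${\mathcal{SS}\mathrm{ets}}_{1}$ and translating the homotopical equation $f_{\ast}[g'] = [h]$ into an honest pointed simplicial homotopy $H$ that can be lifted; both are routine given the tools already assembled.
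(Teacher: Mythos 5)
Your proposal is correct and follows essentially the same route as the paper: $X$ is Kan because $X \to \Delta[0]$ factors as a composite of fibrations in ${\mathcal{SS}\mathrm{ets}}_{1}$, the equivalence of (1) and (2) is quoted from Proposition \ref{litmus}, (2) $\Rightarrow$ (3) solves the lifting problem against a spherical representative of a class in $\pi_{2}(Y)$, and (3) $\Rightarrow$ (2) lifts a pointed homotopy along the acyclic cofibration $S_{\mathrm{simp}}^{2} \to S_{\mathrm{simp}}^{2} \wedge \Delta[1]_{+}$ exactly as in the paper (up to swapping the two ends of the cylinder).
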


\begin{proof} The unique map $X \rightarrow \Delta[0]$ factors in ${\mathcal{SS}\mathrm{ets}}_{1}$ as the composite of fibrations $X \xrightarrow{f} Y \rightarrow \Delta[0]$. Hence, $X$ is fibrant in ${\mathcal{SS}\mathrm{ets}}_{1}$, thus a Kan complex by the characterization of fibrant objects in ${\mathcal{SS}\mathrm{ets}}_{1}$ from Theorem \ref{rightinduceeyebrow}. Now, Proposition \ref{litmus} says that (1) and (2) are equivalent. We show that (2) and (3) are equivalent. Firstly, suppose that $f$ has the right lifting property with respect to ${R}_{1}(j_{2, k})$ for $0 \leq k \leq 2$. As $Y$ is a Kan complex, every element of $\pi_{2}(Y)$ is of the form $[\omega]$ for a map $\omega \colon S_{\mathrm{simp}}^{2} \rightarrow Y$. We solve the lifting problem
\[\begin{tikzcd}[row sep=3mm, column sep=3mm, ampersand replacement=\&]
	{\Delta[0]} \&\& X \\
	\\
	{S_{\mathrm{simp}}^{2}} \&\& Y
	\arrow[from=1-1, to=1-3]
	\arrow[from=1-1, to=3-1]
	\arrow["f"', from=1-3, to=3-3]
	\arrow["\alpha", dashed, from=3-1, to=1-3]
	\arrow["\omega", from=3-1, to=3-3]
\end{tikzcd}\]
to produce an element $[\alpha] \in \pi_{2}(X)$ such that $f_{\ast}([\alpha])=[f\alpha]=[\omega]$.

Conversely, suppose that $f$ induces an epimorphism at $\pi_{2}$. For $0 \leq k \leq 2$, we solve a lifting problem
\[\begin{tikzcd}[row sep=3mm, column sep=3mm, ampersand replacement=\&]
	{\Delta[0]} \&\& X \\
	\\
	{S_{\mathrm{simp}}^{2}} \&\& {Y.}
	\arrow[from=1-1, to=1-3]
	\arrow["{{R}_{1}(j_{2, k})}"', from=1-1, to=3-1]
	\arrow["f"', from=1-3, to=3-3]
	\arrow["\psi", dashed, from=3-1, to=1-3]
	\arrow["\eta", from=3-1, to=3-3]
\end{tikzcd}\]
As $f_{\ast}$ is an epimorphism, there is a commutative diagram
\[\begin{tikzcd}[row sep=3mm, column sep=3mm, ampersand replacement=\&]
	\&\& {S_{\mathrm{simp}}^{2}} \&\& X \\
	\\
	{S_{\mathrm{simp}}^{2}} \&\& {S_{\mathrm{simp}}^{2} \wedge \Delta[1]_{+}} \&\& {Y,}
	\arrow["\beta", from=1-3, to=1-5]
	\arrow["{d^{1}}"', from=1-3, to=3-3]
	\arrow["f"', from=1-5, to=3-5]
	\arrow["{d^{0}}", from=3-1, to=3-3]
	\arrow["\eta"', curve={height=24pt}, from=3-1, to=3-5]
	\arrow["H", from=3-3, to=3-5]
\end{tikzcd}\]
where $f_{\ast}([\beta])=[f\beta]=[\eta]$ by means of the homotopy $H \colon S_{\mathrm{simp}}^{2} \wedge \Delta[1]_{+} \rightarrow Y$ starting from $Hd^{1}=f\beta$ and ending at $Hd^{0}=\eta$. By Lemma \ref{smashingtensor}, $S_{\mathrm{simp}}^{2} \wedge \Delta[1]_{+}$ is $1$-reduced. As $d^{1}$ is an acyclic cofibration in ${\mathcal{SS}\mathrm{ets}}_{1}$ and $f$ is a fibration in ${\mathcal{SS}\mathrm{ets}}_{1}$, there exists a lift
\[\begin{tikzcd}[row sep=3mm, column sep=3mm, ampersand replacement=\&]
	\&\& {S_{\mathrm{simp}}^{2}} \&\& X \\
	\\
	{S_{\mathrm{simp}}^{2}} \&\& {S_{\mathrm{simp}}^{2} \wedge \Delta[1]_{+}} \&\& {Y.}
	\arrow["\beta", from=1-3, to=1-5]
	\arrow["{d^{1}}"', from=1-3, to=3-3]
	\arrow["f"', from=1-5, to=3-5]
	\arrow["{d^{0}}", from=3-1, to=3-3]
	\arrow["\eta"', curve={height=24pt}, from=3-1, to=3-5]
	\arrow["g", dashed, from=3-3, to=1-5]
	\arrow["H", from=3-3, to=3-5]
\end{tikzcd}\]
Then, $\psi=gd^{0}$ solves our lifting problem because $f\psi=fgd^{0}=Hd^{0}=\eta$.
\end{proof}
The fibrancy condition in Proposition \ref{litmussuper} is the reason why we could not have used it to streamline our proof of Proposition \ref{litmuscor}. Indeed, we conclude this section by showing that, for $n \geq 2$, the simplicial $n$-sphere $S_{\mathrm{simp}}^{n}$ is not a Kan complex. To that end, we need a counting lemma.

\begin{lemma}\label{countingfaces} Let $x$ be an $n$-simplex of a simplicial set. For $0 \leq k \leq n$, let $s_{k}(x)$ be the $k$-th degeneracy of $x$.

\begin{enumerate}

\item If $x$ is a non-degenerate $n$-simplex, then $s_{k}(x)$ has exactly $2$ non-degenerate faces.

\item If $x$ is a degenerate $n$-simplex, then $s_{k}(x)$ has no non-degenerate faces.

\end{enumerate}
Thus, the degenerate $(n+1)$-simplex $s_{k}(x)$ has at most $2$ non-degenerate faces.

\end{lemma}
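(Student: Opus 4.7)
The plan is to handle both cases in one shot by computing all $n+2$ faces of the $(n+1)$-simplex $s_k(x)$ directly from the simplicial identities governing the interaction of face and degeneracy maps. Recall those identities read
\[
d_i s_k \;=\; s_{k-1} d_i \quad (i < k), \qquad d_k s_k \;=\; d_{k+1} s_k \;=\; \mathrm{id}, \qquad d_i s_k \;=\; s_k d_{i-1} \quad (i > k+1).
\]
Applied to $x$, they show at once that $d_k s_k(x) = d_{k+1} s_k(x) = x$, while every other face $d_i s_k(x)$ with $i \notin \{k, k+1\}$ lies in the image of some degeneracy map and is therefore a degenerate $n$-simplex.

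For part (1), if $x$ is non-degenerate, then the two faces at indices $k$ and $k+1$ are non-degenerate (both equal to $x$), whereas the remaining $n$ faces are degenerate by the computation above; this yields exactly two non-degenerate faces. For part (2), if $x$ is degenerate, then the faces at indices $k$ and $k+1$ coincide with the degenerate simplex $x$, and the remaining faces are again degenerate, so all $n+2$ faces of $s_k(x)$ are degenerate.

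There is no substantive obstacle here; the argument is pure bookkeeping with the simplicial identities. The only subtlety worth flagging is the counting convention: we read ``$s_k(x)$ has exactly two non-degenerate faces'' as counting face \emph{positions} $i \in \{0, \dots, n+1\}$ at which $d_i s_k(x)$ is non-degenerate, since the two non-degenerate faces in case (1) coincide as a single subsimplex $x$. With that reading, both parts of the lemma, and the ``at most $2$'' summary, follow immediately from the case analysis above.
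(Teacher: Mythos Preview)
Your proof is correct and takes essentially the same approach as the paper: both compute the faces $d_i s_k(x)$ directly from the simplicial identities, observing that $d_k s_k(x) = d_{k+1} s_k(x) = x$ while all other faces are in the image of a degeneracy. Your remark about the counting convention (face positions versus distinct subsimplices) is a helpful clarification the paper leaves implicit.
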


\begin{proof} For $0 \leq i \leq n+1$, let $d_{i}s_{k}(x)$ be the $i$-th face of $s_{k}(x)$. The result follows from the simplicial identities \[d_{i}s_{k}(x) = \begin{cases} s_{k-1}d_{i}(x), \textrm{ if } 0 \leq i \leq k-1; \\
x, \textrm{ if } k \leq i \leq k+1; \\
s_{k}d_{i-1}(x), \textrm{ if } k+2 \leq i \leq n+1. \end{cases}\]
\end{proof}

Our counting lemma allows us to prove that, for $n \geq 2$, the simplicial $n$-sphere $S_{\mathrm{simp}}^{n}$ is not a Kan complex.

\begin{proposition}\label{srkannot} For $n \geq 2$, the simplicial $n$-sphere $S_{\mathrm{simp}}^{n}$ is not a Kan complex, thus not fibrant in ${\mathcal{SS}\mathrm{ets}}_{1}$.
\end{proposition}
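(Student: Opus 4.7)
The plan is to exhibit an unfillable horn in $S_{\mathrm{simp}}^{n}$ for each $n \geq 2$. Since $S_{\mathrm{simp}}^{n}$ has only two non-degenerate simplices, namely the basepoint $\ast$ in dimension $0$ and the canonical $n$-simplex $\epsilon$ in dimension $n$, every simplex of $S_{\mathrm{simp}}^{n}$ of dimension strictly between $0$ and $n$ is degenerate. A quick simplicial-identity computation shows that in each such dimension $d$ with $0 < d < n$ there is in fact exactly one simplex (an iterated degeneracy of $\ast$). This uniqueness will make all boundary-compatibility conditions trivial, so constructing horns into $S_{\mathrm{simp}}^{n}$ is essentially unconstrained.

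Concretely, I would build a map $h \colon \Lambda[n+1, k] \rightarrow S_{\mathrm{simp}}^{n}$ (for some fixed $k$, say $k = 0$) by sending each of the $n+1$ non-missing codimension-one faces $d_{i}\sigma_{n+1}$ (for $i \neq k$) to the non-degenerate $n$-simplex $\epsilon$. Compatibility among these assignments reduces to the equality of various $(n-1)$-dimensional iterated faces of $\epsilon$ inside $S_{\mathrm{simp}}^{n}$, which holds because all $(n-1)$-simplices of $S_{\mathrm{simp}}^{n}$ coincide with the unique degenerate $(n-1)$-simplex of the basepoint.

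Next I would show that $h$ admits no filler. A filler would be an $(n+1)$-simplex $\sigma$ of $S_{\mathrm{simp}}^{n}$ whose $n+1$ faces $d_{i}\sigma$, for $i \neq k$, are all equal to $\epsilon$, which is non-degenerate. Since $S_{\mathrm{simp}}^{n}$ has no non-degenerate simplices in dimension $n+1$, such a $\sigma$ must be a degeneracy, and Lemma \ref{countingfaces} guarantees that any degeneracy has at most $2$ non-degenerate faces. For $n \geq 2$ we have $n+1 \geq 3 > 2$, so this is impossible. Therefore $h$ has no extension along the horn inclusion, and $S_{\mathrm{simp}}^{n}$ fails to be a Kan complex. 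Non-fibrancy in $\mathcal{SS}\mathrm{ets}_{1}$ then follows immediately from the characterization of fibrant objects in Theorem \ref{rightinduceeyebrow}.

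The only step requiring a little care is the verification that the assignment $d_{i}\sigma_{n+1} \mapsto \epsilon$ actually defines a simplicial map out of $\Lambda[n+1, k]$; everything else is a direct count. In particular, the case $n = 2$, which is the tightest case of the bound $n+1 > 2$, is handled uniformly with the higher-dimensional cases by exactly the same horn $\Lambda[n+1, k]$.
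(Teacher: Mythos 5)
Your proposal is correct and follows essentially the same argument as the paper: the same horn $\Lambda[n+1,k] \rightarrow S_{\mathrm{simp}}^{n}$ picking out $(\epsilon, \dots, \epsilon)$, with the same appeal to Lemma \ref{countingfaces} to rule out a filler since a degenerate $(n+1)$-simplex cannot have $n+1 \geq 3$ non-degenerate faces. The only difference is that you spell out the well-definedness of the horn (via the uniqueness of the low-dimensional degenerate simplices), which the paper leaves implicit.
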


\begin{proof} Write $\epsilon$ for the unique non-degenerate $n$-simplex of $S_{\mathrm{simp}}^{n}$. For $0 \leq k \leq n+1$, consider the lifting problem
\[\begin{tikzcd}[row sep=3mm, column sep=3mm, ampersand replacement=\&]
	{\Lambda[n+1, k]} \&\& {S_{\mathrm{simp}}^{n}} \\
	\\
	{\Delta[n+1],}
	\arrow["h", from=1-1, to=1-3]
	\arrow["{j_{n+1, k}}"', from=1-1, to=3-1]
	\arrow["g", dashed, from=3-1, to=1-3]
\end{tikzcd}\]
where $h$ is the horn that picks out the $n$-simplices $(\epsilon, \dots, \epsilon)$. Any simplex $g$ that fills in this horn would be a degenerate $(n+1)$-simplex with $n+1$ non-degenerate faces, which is impossible for $n \geq 2$ by Lemma \ref{countingfaces}.
\end{proof}

\section{The simplicial model structure on $1$-reduced simplicial sets}\label{section5}
A convenient feature of the model categories ${\mathcal{SS}\mathrm{ets}}$ and ${\mathcal{SS}\mathrm{ets}}_{\ast}$ is that they are simplicial, so they carry more structure for us to use. In particular, between any two objects, we have an explicit construction of a function complex, which facilitates the study of left Bousfield localizations of these model categories.

In this section, we endow ${\mathcal{SS}\mathrm{ets}}_{1}$ with a simplicial model structure. This model structure makes our subsequent left Bousfield localization of ${\mathcal{SS}\mathrm{ets}}_{1}$ more convenient in practice. Moreover, as left Bousfield localizations preserve the simplicial structure of a model category, the entire family of model structures on $1$-reduced simplicial sets inherits the simplicial structure in this section, so we get significant traction out of our argument.

\subsection{Simplicial model categories}\label{section5.1}
A reference on simplicial model categories is \cite[Chapter 9]{Hirschhorn2003}. We reformulate axiom (2) in the definition from {\cite[Definition 9.1.2]{Hirschhorn2003}} for our purposes using {\cite[Proposition 9.3.7]{Hirschhorn2003}}.

\begin{definition}\label{defsmcdef} Let $\mathcal{M}$ be a model category that is enriched in simplicial sets; in particular, for two objects $X$ and $Y$ of $\mathcal{M}$, we have a \emph{function complex} $\operatorname{Map}_{\mathcal{M}}(X, Y)$. Then, $\mathcal{M}$ is a \emph{simplicial model category} if $\mathcal{M}$ satisfies the following axioms.

\begin{enumerate}

\item (\emph{Adjointness axiom}) For two objects $X$ and $Y$ of $\mathcal{M}$ and a simplicial set $Z$, there is a \emph{tensor} object $X \otimes Z$ of $\mathcal{M}$ and a \emph{cotensor} object $Y^{Z}$ of $\mathcal{M}$ satisfying the natural isomorphisms of simplicial sets
\[\operatorname{Map}_{\mathcal{M}}(X \otimes Z, Y) \cong \operatorname{Map}_{{\mathcal{SS}\mathrm{ets}}}(Z, \operatorname{Map}_{\mathcal{M}}(X, Y)) \cong \operatorname{Map}_{\mathcal{M}}(X, Y^{Z}).\]
Note that $\operatorname{Map}_{{\mathcal{SS}\mathrm{ets}}}(Z, \operatorname{Map}_{\mathcal{M}}(X, Y))$ is a function complex of simplicial sets; see Example \ref{defsmcdef1}.

\item (\emph{Homotopy lifting-extension axiom}) If $i \colon A \rightarrow B$ is a cofibration in $\mathcal{M}$ and $j \colon L \rightarrow K$ is a cofibration of simplicial sets, then the induced morphism
\[A \otimes K \underset{A \otimes L}{\coprod} B \otimes L \rightarrow B \otimes K\]
is a cofibration in $\mathcal{M}$ that is acyclic if $i$ or $j$ is acyclic.

\end{enumerate}

\end{definition}

We present the simplicial structure of the model categories ${\mathcal{SS}\mathrm{ets}}$ and ${\mathcal{SS}\mathrm{ets}}_{\ast}$.

\begin{example}[{\cite[Example 9.1.13]{Hirschhorn2003}}]\label{defsmcdef1} The category ${\mathcal{SS}\mathrm{ets}}$ is a simplicial model category. Given simplicial sets $X$, $Y$, and $Z$, we define

\begin{enumerate}

\item the function complex $\operatorname{Map}_{{\mathcal{SS}\mathrm{ets}}}(X, Y)$ to be the simplicial set with $n$-simplices $\operatorname{Hom}_{{\mathcal{SS}\mathrm{ets}}}(X \times \Delta[n], Y)$;

\item the tensor of $X$ with $Z$ to be the simplicial set $X \times Z$; and

\item the cotensor of $X$ with $Z$ to be the simplicial set $\operatorname{Map}_{{\mathcal{SS}\mathrm{ets}}}(Z, X)$.

\end{enumerate}

\end{example}

\begin{example}[{\cite[Example 9.1.14]{Hirschhorn2003}}]\label{defsmcdef2} The category ${\mathcal{SS}\mathrm{ets}}_{\ast}$ is a simplicial model category. For two pointed simplicial sets $X$ and $Y$ and a simplicial set $Z$, we define

\begin{enumerate}

\item the function complex $\operatorname{Map}_{{\mathcal{SS}\mathrm{ets}}_{\ast}}(X, Y)$ as the simplicial set with $n$-simplices $\operatorname{Hom}_{{\mathcal{SS}\mathrm{ets}}_{\ast}}(X \wedge \Delta[n]_{+}, Y)$;

\item the tensor of $X$ with $Z$ to be the pointed simplicial set $X \wedge Z_{+}$; and

\item the cotensor of $X$ with $Z$ to be the pointed simplicial set $\operatorname{Map}_{{\mathcal{SS}\mathrm{ets}}_{\ast}}(Z_{+}, X)$.

\end{enumerate}

\end{example}

\subsection{The simplicial model structure}\label{section5.2} As our left transfer reveals to us, the homotopy theory of $1$-reduced simplicial sets is closer to that of pointed simplicial sets. Thus, it is the simplicial structure from Example \ref{defsmcdef2} that we modify to get a simplicial structure on ${\mathcal{SS}\mathrm{ets}}_{1}$. In fact, our modification guarantees that the homotopy lifting-extension axiom for ${\mathcal{SS}\mathrm{ets}}_{1}$ is identical to that for ${\mathcal{SS}\mathrm{ets}}_{\ast}$, so we circumvent what is often the most difficult part of endowing a model category with a simplicial structure.

For the simplicial structure on the model category ${\mathcal{SS}\mathrm{ets}}_{1}$, we define

\begin{enumerate}

\item the function complex of two $1$-reduced simplicial sets $X$ and $Y$ to be $\operatorname{Map}_{{\mathcal{SS}\mathrm{ets}}_{\ast}}(X, Y)$;

\item the tensor of a $1$-reduced simplicial set $X$ with a simplicial set $Z$ to be $X \wedge Z_{+}$; and

\item the cotensor of a $1$-reduced simplicial set $X$ with a simplicial set $Z$ to be $E_{1}(\operatorname{Map}_{{\mathcal{SS}\mathrm{ets}}_{\ast}}(Z_{+}, X))$.

\end{enumerate}
With our definition of function complexes, ${\mathcal{SS}\mathrm{ets}}_{1}$ is enriched in simplicial sets by Example \ref{defsmcdef2}. The homotopy lifting-extension axiom also holds by Example \ref{defsmcdef2}, so it remains to verify the adjointness axiom.

\begin{proposition}\label{simplicialaxiomm6} Given two $1$-reduced simplicial sets $X$ and $Y$ and a simplicial set $Z$, there are isomorphisms of simplicial sets
\[\operatorname{Map}_{{\mathcal{SS}\mathrm{ets}}_{\ast}}(X \wedge Z_{+}, Y) \cong \operatorname{Map}_{{\mathcal{SS}\mathrm{ets}}}(Z, \operatorname{Map}_{{\mathcal{SS}\mathrm{ets}}_{\ast}}(X, Y)) \cong \operatorname{Map}_{{\mathcal{SS}\mathrm{ets}}_{\ast}}(X, E_{1}(\operatorname{Map}_{{\mathcal{SS}\mathrm{ets}}_{\ast}}(Z_{+}, Y)))\]
that are natural in $X$, $Y$, and $Z$.
\end{proposition}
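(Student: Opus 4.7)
The plan is to establish both isomorphisms simplex-by-simplex, so that the Yoneda lemma reduces the statement to natural bijections between the corresponding sets of $n$-simplices; these will automatically assemble into isomorphisms of simplicial sets, as all the bijections we use are natural in $[n]$. The main ingredients will be the pointed exponential law (Proposition \ref{pointedexplaw}), the adjunction $(-)_{+} \dashv V$ (Lemma \ref{pointtheunpointed}), the associativity identity $(Z_{+}) \wedge (\Delta[n]_{+}) \cong (Z \times \Delta[n])_{+}$ (Lemma \ref{smashanddisjoint}), and, crucially for the second isomorphism, the universal property of the $1$st Eilenberg subcomplex (Lemma \ref{unproprEilenberg}).

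For the first isomorphism, I would expand the $n$-simplices of the left-hand side as $\operatorname{Hom}_{{\mathcal{SS}\mathrm{ets}}_{\ast}}\bigl((X \wedge Z_{+}) \wedge \Delta[n]_{+}, Y\bigr)$, reassociate the smash product, and apply Lemma \ref{smashanddisjoint} to rewrite this as $\operatorname{Hom}_{{\mathcal{SS}\mathrm{ets}}_{\ast}}\bigl(X \wedge (Z \times \Delta[n])_{+}, Y\bigr)$. The $n$-simplices of the middle term are $\operatorname{Hom}_{{\mathcal{SS}\mathrm{ets}}}\bigl(Z \times \Delta[n], \operatorname{Map}_{{\mathcal{SS}\mathrm{ets}}_{\ast}}(X, Y)\bigr)$; applying the $(-)_{+} \dashv V$ adjunction converts this into $\operatorname{Hom}_{{\mathcal{SS}\mathrm{ets}}_{\ast}}\bigl((Z \times \Delta[n])_{+}, \operatorname{Map}_{{\mathcal{SS}\mathrm{ets}}_{\ast}}(X, Y)\bigr)$, and Proposition \ref{pointedexplaw} matches this with $\operatorname{Hom}_{{\mathcal{SS}\mathrm{ets}}_{\ast}}\bigl(X \wedge (Z \times \Delta[n])_{+}, Y\bigr)$.

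For the second isomorphism, the $n$-simplices of the right-hand side are $\operatorname{Hom}_{{\mathcal{SS}\mathrm{ets}}_{\ast}}\bigl(X \wedge \Delta[n]_{+}, E_{1}(\operatorname{Map}_{{\mathcal{SS}\mathrm{ets}}_{\ast}}(Z_{+}, Y))\bigr)$. Since $X$ is $1$-reduced, Lemma \ref{smashingtensor} says that $X \wedge \Delta[n]_{+}$ is $1$-reduced, so by Lemma \ref{unproprEilenberg} every pointed map out of $X \wedge \Delta[n]_{+}$ into $\operatorname{Map}_{{\mathcal{SS}\mathrm{ets}}_{\ast}}(Z_{+}, Y)$ factors uniquely through its $1$st Eilenberg subcomplex; this identifies the above set with $\operatorname{Hom}_{{\mathcal{SS}\mathrm{ets}}_{\ast}}\bigl(X \wedge \Delta[n]_{+}, \operatorname{Map}_{{\mathcal{SS}\mathrm{ets}}_{\ast}}(Z_{+}, Y)\bigr)$. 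Applying Proposition \ref{pointedexplaw} (equivalently, the adjunction from Corollary \ref{pointedexplawcor}), together with the associativity and commutativity of the smash product and Lemma \ref{smashanddisjoint}, one arrives at $\operatorname{Hom}_{{\mathcal{SS}\mathrm{ets}}_{\ast}}\bigl(X \wedge (Z \times \Delta[n])_{+}, Y\bigr)$, matching the first computation.

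The main obstacle, or rather the only non-formal step, is the passage through $E_{1}$ in the second isomorphism: without the hypothesis that $X$ is $1$-reduced, $X \wedge \Delta[n]_{+}$ need not be $1$-reduced and Lemma \ref{unproprEilenberg} would not apply, so this is exactly the spot where the $1$-reducedness of $X$ is used. Everything else reduces to bookkeeping with smash products and the exponential law, and naturality in $X$, $Y$, and $Z$ at each step guarantees that the composite bijection of $n$-simplex sets commutes with the face and degeneracy operators, yielding the desired isomorphisms of simplicial sets.
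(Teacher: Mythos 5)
Your proposal is correct and takes essentially the same route as the paper: a simplex-by-simplex computation using the pointed exponential law (Proposition \ref{pointedexplaw}), Lemma \ref{smashanddisjoint}, the adjunction of Lemma \ref{pointtheunpointed}, and the $1$-reducedness of $X \wedge \Delta[n]_{+}$ to pass through $E_{1}$. The only cosmetic differences are the order in which the bookkeeping steps are applied and that you invoke the universal property of the Eilenberg subcomplex (Lemma \ref{unproprEilenberg}) directly where the paper cites the equivalent adjunction of Lemma \ref{yesquite}(2).
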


\begin{proof} The natural isomorphism $\operatorname{Map}_{{\mathcal{SS}\mathrm{ets}}_{\ast}}(X \wedge Z_{+}, Y) \cong \operatorname{Map}_{{\mathcal{SS}\mathrm{ets}}}(Z, \operatorname{Map}_{{\mathcal{SS}\mathrm{ets}}_{\ast}}(X, Y))$ is a consequence of Example \ref{defsmcdef2}. To prove that $\operatorname{Map}_{{\mathcal{SS}\mathrm{ets}}_{\ast}}(X \wedge Z_{+}, Y) \cong \operatorname{Map}_{{\mathcal{SS}\mathrm{ets}}_{\ast}}(X, E_{1}(\operatorname{Map}_{{\mathcal{SS}\mathrm{ets}}_{\ast}}(Z_{+}, Y)))$, we compute that

\begin{flalign*}
\operatorname{Hom}_{{\mathcal{SS}\mathrm{ets}}_{\ast}}((X \wedge Z_{+}) \wedge \Delta[n]_{+}, Y) &\cong \operatorname{Hom}_{{\mathcal{SS}\mathrm{ets}}_{\ast}}(X \wedge \Delta[n]_{+}, \operatorname{Map}_{{\mathcal{SS}\mathrm{ets}}_{\ast}}(Z_{+}, Y)) \\
&\cong \operatorname{Hom}_{{\mathcal{SS}\mathrm{ets}}_{1}}(X \wedge \Delta[n]_{+}, E_{1}(\operatorname{Map}_{{\mathcal{SS}\mathrm{ets}}_{\ast}}(Z_{+}, Y))) \\
 &= \operatorname{Hom}_{{\mathcal{SS}\mathrm{ets}}_{\ast}}(X \wedge \Delta[n]_{+}, E_{1}(\operatorname{Map}_{{\mathcal{SS}\mathrm{ets}}_{\ast}}(Z_{+}, Y))),
\end{flalign*}
where the first bijection comes from Example \ref{defsmcdef2} and the second bijection follows from Proposition \ref{yesquite}(2).
\end{proof}

Overall, our modification of Example \ref{defsmcdef2} endows the model category ${\mathcal{SS}\mathrm{ets}}_{1}$ with a simplicial structure.

\begin{theorem}\label{simplicialdefsworks} Let ${\mathcal{SS}\mathrm{ets}}_{1}$ denote the model category from Theorem \ref{rightinduceeyebrow}. We define

\begin{enumerate}

\item the function complex of two $1$-reduced simplicial sets $X$ and $Y$ to be $\operatorname{Map}_{{\mathcal{SS}\mathrm{ets}}_{\ast}}(X, Y)$;

\item the tensor of a $1$-reduced simplicial set $X$ with a simplicial set $Z$ to be $X \wedge Z_{+}$; and

\item the cotensor of a $1$-reduced simplicial set $X$ with a simplicial set $Z$ to be $E_{1}(\operatorname{Map}_{{\mathcal{SS}\mathrm{ets}}_{\ast}}(Z_{+}, X))$.

\end{enumerate}
With these definitions, ${\mathcal{SS}\mathrm{ets}}_{1}$ has the structure of a simplicial model category.
\end{theorem}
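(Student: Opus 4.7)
The plan is to verify the two axioms in Definition \ref{defsmcdef} using the groundwork laid in this section, noting that $\mathcal{SS}\mathrm{ets}_{1}$ has already been shown to be a model category in Theorem \ref{rightinduceeyebrow}.

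First, I would check that $\mathcal{SS}\mathrm{ets}_{1}$ is enriched over simplicial sets with the function complex $\operatorname{Map}_{\mathcal{SS}\mathrm{ets}_{\ast}}(X, Y)$. This is essentially inherited from the simplicial enrichment of $\mathcal{SS}\mathrm{ets}_{\ast}$ (Example \ref{defsmcdef2}), since the forgetful functor $F \colon \mathcal{SS}\mathrm{ets}_{1} \to \mathcal{SS}\mathrm{ets}_{\ast}$ from Lemma \ref{yesquite} is fully faithful; the composition and unit maps are those of $\mathcal{SS}\mathrm{ets}_{\ast}$ restricted to $1$-reduced simplicial sets, and associativity and unitality are automatic. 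I would also briefly note that the tensor $X \wedge Z_{+}$ lies in $\mathcal{SS}\mathrm{ets}_{1}$ by Lemma \ref{smashingtensor}, and the cotensor $E_{1}\left(\operatorname{Map}_{\mathcal{SS}\mathrm{ets}_{\ast}}(Z_{+}, X)\right)$ lies in $\mathcal{SS}\mathrm{ets}_{1}$ by construction, so both assignments are well-defined functors into $\mathcal{SS}\mathrm{ets}_{1}$.

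Next, for the adjointness axiom, I would cite Proposition \ref{simplicialaxiomm6}, which supplies the two required natural isomorphisms of simplicial sets
\[\operatorname{Map}_{\mathcal{SS}\mathrm{ets}_{\ast}}\left(X \wedge Z_{+}, Y\right) \cong \operatorname{Map}_{\mathcal{SS}\mathrm{ets}}\left(Z, \operatorname{Map}_{\mathcal{SS}\mathrm{ets}_{\ast}}(X, Y)\right) \cong \operatorname{Map}_{\mathcal{SS}\mathrm{ets}_{\ast}}\left(X, E_{1}\left(\operatorname{Map}_{\mathcal{SS}\mathrm{ets}_{\ast}}(Z_{+}, X)\right)\right)\]
for $X, Y \in \mathcal{SS}\mathrm{ets}_{1}$ and $Z \in \mathcal{SS}\mathrm{ets}$. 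Finally, for the homotopy lifting-extension axiom, I would cite Proposition \ref{simplicialaxiomm7}, which verifies exactly the pushout-product statement required: for a cofibration $i$ and a fibration $p$ in $\mathcal{SS}\mathrm{ets}_{1}$, the induced pullback-hom map is a Kan fibration, and moreover a weak homotopy equivalence whenever $i$ or $p$ is.

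There is no real obstacle here, as the proof is essentially an assembly step: the technical content has already been absorbed into Propositions \ref{simplicialaxiomm6} and \ref{simplicialaxiomm7} (the latter of which rests on the saturation argument of Lemma \ref{appworthy}, the exponential law in Proposition \ref{pointedexplaw}, and the left transfer from Theorem \ref{rightinduceeyebrow}). The only thing to be careful about is making explicit that the simplicial enrichment itself descends cleanly from $\mathcal{SS}\mathrm{ets}_{\ast}$, which I would dispatch with a short remark about $F$ being fully faithful.
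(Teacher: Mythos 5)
Your proposal is correct and follows essentially the same route as the paper: the paper's proof likewise observes that the enrichment is inherited from ${\mathcal{SS}\mathrm{ets}}_{\ast}$ and then cites Propositions \ref{simplicialaxiomm6} and \ref{simplicialaxiomm7} for the two axioms. Your extra remarks (full faithfulness of $F$, Lemma \ref{smashingtensor} for the tensor landing in ${\mathcal{SS}\mathrm{ets}}_{1}$) are harmless elaborations of the same assembly argument.
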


Lastly, we note that, in \cite{Burke2021}, Burke establishes an analogous simplicial structure for \emph{reduced simplicial sets}, that is, simplicial sets with a unique $0$-simplex; Burke's line of argument is different from ours, though.

\section{Rational homotopy theory of $1$-connected spaces}\label{section6}
From now on, let $M$ be a multiplicative subset of $\mathbb{Z}$. We gather the background and auxiliary results from the rational homotopy theory of $1$-connected topological spaces that we need to recover the entirety of Quillen's family of model structures on ${\mathcal{SS}\mathrm{ets}}_{1}$. In particular, we describe the $M$-local spaces and $M$-local homotopy equivalences in terms of function complexes, which prepares us for the left Bousfield localization in Section \ref{section7}.

\subsection{Rationalization of $1$-connected spaces}\label{section6.1}
We give a review from \cite[Chapter 2]{Sullivan1970} and \cite[Chapter 9]{FHT2001}.

\begin{definition}\label{defrhtdef} Let $M$ be a multiplicative subset of $\mathbb{Z}$.

\begin{enumerate}

\item A map of $1$-connected spaces $f \colon X \rightarrow Y$ is an \emph{$M$-local homotopy equivalence} if, for every $n \geq 2$, the map $f_{\ast} \otimes \operatorname{id}_{M^{-1}\mathbb{Z}} \colon \pi_{n}(X) \otimes M^{-1}\mathbb{Z} \rightarrow \pi_{n}(Y) \otimes M^{-1}\mathbb{Z}$ is an isomorphism.

\item A $1$-connected space $Y$ is \emph{$M$-local} if, for every $n \geq 2$, the abelian group $\pi_{n}(Y)$ is an $M^{-1}\mathbb{Z}$-module.

\item An \emph{$M$-localization} of a $1$-connected space $Y$ is an $M$-local homotopy equivalence with $M$-local target $f \colon Y \rightarrow Y_{M^{-1}\mathbb{Z}}$.

\end{enumerate}
When $M^{-1}\mathbb{Z}=\mathbb{Q}$, we speak of a \emph{rational homotopy equivalence}, a \emph{rational space}, and a \emph{rationalization}.

\end{definition}

\begin{remark}\label{defrhtdef1} If $f \colon Y \rightarrow Y_{\mathbb{Q}}$ is a rationalization of $Y$, then, for every $n \geq 2$, $f$ induces the isomorphism $f_{\ast} \otimes \operatorname{id}_{\mathbb{Q}} \colon \pi_{n}(Y) \otimes \mathbb{Q} \xrightarrow{\cong} \pi_{n}(Y_{\mathbb{Q}})$ that discards torsion in the $n$-th homotopy group of $Y$ in a topological fashion.
\end{remark}

The following example features often in our arguments.

\begin{example}\label{defrhtdef2} Every weak homotopy equivalence of $1$-connected spaces is an $M$-local homotopy equivalence. Also, every $M$-local homotopy equivalence of $M$-local spaces is a weak homotopy equivalence.
\end{example}

What is more, the rationalization of the $n$-sphere in dimensions $n \geq 2$ is a building block for the rationalization of all $1$-connected spaces; as such, it is a cornerstone of our arguments in the remainder of this paper. We do not need the details of the geometric construction at the moment, though we present them in Section \ref{section9.2}.

\begin{example}\label{defrhtdef3} For $n \geq 2$, the \emph{$M$-local $n$-sphere} $S^{n}_{M^{-1}\mathbb{Z}}$ is an $M$-local CW complex containing the $n$-sphere $S^{n}$ as a CW subcomplex, and the inclusion $S^{n} \rightarrow S^{n}_{M^{-1}\mathbb{Z}}$ is an $M$-localization of $S^{n}$.
\end{example}
Every $1$-connected space can be $M$-localized, and thus rationalized, uniquely up to homotopy equivalence.

\begin{theorem}[{\cite[Theorem 9.7]{FHT2001}}]\label{thebigdeal} Let $M$ be a multiplicative subset of $\mathbb{Z}$. Let $Y$ be a $1$-connected space.

\begin{enumerate}

\item There exists a relative CW complex $(Y_{M^{-1}\mathbb{Z}}, Y)$ with no $0$-cells and no $1$-cells such that the inclusion $Y \rightarrow Y_{M^{-1}\mathbb{Z}}$ is an $M$-localization of $Y$.

\item Every map $f \colon Y \rightarrow Z$ with $M$-local target extends to a map $g \colon Y_{M^{-1}\mathbb{Z}} \rightarrow Z$ such that the diagram

\[\begin{tikzcd}[row sep=3mm, column sep=3mm, ampersand replacement=\&]
	Y \&\& {Y_{M^{-1}\mathbb{Z}}} \\
	\\
	\&\& {Z}
	\arrow[from=1-1, to=1-3]
	\arrow["f"', from=1-1, to=3-3]
	\arrow["g"', dashed, from=1-3, to=3-3]
\end{tikzcd}\]
commutes, and every homotopy $f \simeq f'$ extends to a homotopy $g \simeq g'$. Thus, every map $f \colon Y \rightarrow Z$ with $M$-local target extends to a map $g \colon Y_{M^{-1}\mathbb{Z}} \rightarrow Z$ uniquely up to homotopy.

\end{enumerate}
In particular, any other space $Y'_{M^{-1}\mathbb{Z}}$ satisfying (1) and (2) is homotopy equivalent (relative to $Y$) to $Y_{M^{-1}\mathbb{Z}}$.
\end{theorem}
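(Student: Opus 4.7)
The plan is to build $Y_{M^{-1}\mathbb{Z}}$ by inductively attaching $M$-local cells to $Y$, and then to establish the universal property via classical obstruction theory. As a preliminary, I would first construct the building blocks: for each $n \geq 2$, the $M$-local sphere $S^{n}_{M^{-1}\mathbb{Z}}$ of Example \ref{defrhtdef3}, obtained from $S^{n}$ by inductively attaching cells of dimension $\geq n+1$ so as to invert every $m \in M$ on every higher homotopy group. Concretely, one attaches Moore spaces of the form $M(\mathbb{Z}/m, k)$ to kill $m$-torsion and adjoins mapping cones of $m$-fold maps to force $m$-divisibility. A Hurewicz/homology computation then certifies that $S^{n} \hookrightarrow S^{n}_{M^{-1}\mathbb{Z}}$ is an $M$-localization.

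For part (1), I would fix a CW model of $Y$ with no $0$- or $1$-cells and construct a filtration $Y = Y(1) \subseteq Y(2) \subseteq \cdots$ in which each $\pi_{k}(Y(n))$ with $k \leq n$ is an $M^{-1}\mathbb{Z}$-module isomorphic to $\pi_{k}(Y) \otimes_{\mathbb{Z}} M^{-1}\mathbb{Z}$. The passage from $Y(n)$ to $Y(n+1)$ is carried out by attaching $(n+2)$-cells along Moore-space maps that kill $m$-torsion in $\pi_{n+1}(Y(n))$ and introduce the $m$-divisibility required for $\pi_{n+1}$ to become an $M^{-1}\mathbb{Z}$-module for every $m \in M$, without perturbing the lower homotopy groups. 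The colimit $Y_{M^{-1}\mathbb{Z}} = \colim_{n} Y(n)$ is then a relative CW complex over $Y$ with all cells of dimension $\geq 2$ and with $M$-local homotopy groups, and the inclusion $Y \to Y_{M^{-1}\mathbb{Z}}$ is an $M$-local homotopy equivalence.

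For part (2), obstruction theory on the relative CW pair $(Y_{M^{-1}\mathbb{Z}}, Y)$ yields the extension. Given $f \colon Y \to Z$ with $Z$ being $M$-local, the obstruction to extending $f$ over a fresh cell of dimension $n$ lives in $\pi_{n-1}(Z)$, an $M^{-1}\mathbb{Z}$-module. Since the attaching maps of $Y_{M^{-1}\mathbb{Z}}$ factor through $m$-fold maps for various $m \in M$, each obstruction class is $m$-divisible in a uniquely $m$-divisible group, and therefore vanishes. Applying the same obstruction-theoretic argument to the relative CW pair $(Y_{M^{-1}\mathbb{Z}} \times I,\, Y \times I \cup Y_{M^{-1}\mathbb{Z}} \times \partial I)$ extends every homotopy $f \simeq f'$ to a homotopy $g \simeq g'$, whence $g$ is unique up to homotopy relative to $Y$; the final homotopy-equivalence statement then follows from a standard two-out-of-three argument applied to any competing $Y'_{M^{-1}\mathbb{Z}}$.

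The main difficulty is the inductive bookkeeping in part (1): one must verify that each wave of Moore-space attachments simultaneously achieves $m$-divisibility and kills the relevant torsion on $\pi_{n+1}$, all without disturbing the $M^{-1}\mathbb{Z}$-module structure already established on $\pi_{k}$ for $k \leq n$. Controlling these interactions between homotopy and homology requires a careful application of the relative Hurewicz theorem alongside the Serre spectral sequence, and is where the bulk of the technical content of Theorem \ref{thebigdeal} is concentrated.
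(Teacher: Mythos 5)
The paper does not prove this statement at all: Theorem \ref{thebigdeal} is quoted directly from \cite[Theorem 9.7]{FHT2001}, so your proposal has to be judged against the standard argument there, whose overall shape (build $(Y_{M^{-1}\mathbb{Z}},Y)$ cellularly out of local spheres, then run an induction/obstruction argument over the relative cells for the extension and its uniqueness) you do reproduce. However, two of your steps, as written, would fail. First, in part (1) you claim the passage from $Y(n)$ to $Y(n+1)$ is carried out by attaching $(n+2)$-cells and that this "introduces the $m$-divisibility required for $\pi_{n+1}$." Attaching cells of dimension $n+2$ can only quotient $\pi_{n+1}$ (the induced map on $\pi_{n+1}$ is surjective), so it can kill $M$-torsion but can never create divisibility: $\mathbb{Z}_{(p)}$ or $\mathbb{Q}$ is not a quotient of $\mathbb{Z}$. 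For the same reason, the "mapping cone of an $m$-fold map" in your sphere construction is a Moore space $M(\mathbb{Z}/m,n)$, which adds torsion rather than inverting $m$; what forces divisibility is the mapping telescope of degree maps --- equivalently, wedging on new $(n+1)$-spheres and only then attaching $(n+2)$-cells imposing $m\cdot(\text{new class})=(\text{old class})$ --- exactly as in the construction recalled in Section \ref{section9.2}. So your filtration must adjoin cells in dimension $n+1$ as well (or attach cones on local spheres, as in \cite{FHT2001}), and the "inductive bookkeeping" you defer is precisely where this omission bites.

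Second, in part (2) the justification "each obstruction class is $m$-divisible in a uniquely $m$-divisible group, and therefore vanishes" is a non sequitur: in a uniquely $m$-divisible group \emph{every} element is $m$-divisible, so divisibility implies nothing about vanishing. The correct reasons are either (a) cohomological: once part (1) is in place, $Y \rightarrow Y_{M^{-1}\mathbb{Z}}$ is an $M^{-1}\mathbb{Z}$-homology isomorphism (Theorem \ref{whiteheadserrepair}), so every element of $H_{\ast}(Y_{M^{-1}\mathbb{Z}}, Y; \mathbb{Z})$ is annihilated by an element of $M$, and $\operatorname{Hom}$ and $\operatorname{Ext}$ from such a group into the $M^{-1}\mathbb{Z}$-module $\pi_{\ast}(Z)$ vanish, whence all obstruction groups $H^{\ast}(Y_{M^{-1}\mathbb{Z}}, Y; \pi_{\ast}(Z))$ vanish (this uses that $Z$ is $1$-connected, which holds here by Definition \ref{defrhtdef}); or (b) stage-by-stage: the attaching maps have the form $\iota_{j-1} - k_{j}\iota_{j}$, so one must \emph{choose} the value of $g$ on each new sphere to be the unique $k_{j}$-th division of the previous value --- unique divisibility is used to solve for the extension, and injectivity of multiplication by $k_{j}$ is what gives uniqueness up to homotopy; this is exactly the argument the paper itself carries out for local spheres in Lemma \ref{thebigdealnonsc}. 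With either repair, your homotopy-extension step and the final rigidity statement go through as you indicate, but as stated the key vanishing claim is unjustified.
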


\begin{remark}\label{cwbusiness} For future reference, note that, if $Y$ is a $1$-connected CW complex, then so is $Y_{M^{-1}\mathbb{Z}}$.
\end{remark}

\subsection{A characterization of rational spaces}\label{section6.2}
For a multiplicative subset $M$ of $\mathbb{Z}$, we characterize $M$-local spaces, and thus rational spaces, in a way that supports our left Bousfield localization in Section \ref{section7}. Namely, we describe when a $1$-connected space is $M$-local in the language of function complexes.

For two topological spaces $X$ and $Y$, the {function complex} $\operatorname{Map}_{\mathcal{T}\mathrm{op}}(X, Y)$ is the simplicial set with $n$-simplices $\operatorname{Hom}_{\mathcal{T}\mathrm{op}}(X \times |\Delta[n]|, Y)$. Observe that \[\pi_{0}(\operatorname{Map}_{\mathcal{T}\mathrm{op}}(X, Y))=[X, Y]\] is the set of homotopy classes of maps from $X$ to $Y$. The following lemma is a straightforward application of the universal property of $M$-localization.

\begin{lemma}\label{ineedthisonetoo} Let $M$ be a multiplicative subset of $\mathbb{Z}$. Let $X$ and $Y$ be $1$-connected spaces. If $Y$ is $M$-local, then the restriction map $\operatorname{Map}_{\mathcal{T}\mathrm{op}}(X_{M^{-1}\mathbb{Z}}, Y) \rightarrow \operatorname{Map}_{\mathcal{T}\mathrm{op}}(X, Y)$ is a weak equivalence.
\end{lemma}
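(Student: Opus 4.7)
The plan is to verify that $i^{\ast} \colon \operatorname{Map}_{\mathcal{T}\mathrm{op}}(X_{M^{-1}\mathbb{Z}}, Y) \to \operatorname{Map}_{\mathcal{T}\mathrm{op}}(X, Y)$ is a weak homotopy equivalence by checking that it induces a bijection on $\pi_{n}$ at every basepoint for every $n \geq 0$. Since both function complexes are Kan complexes, this criterion suffices. For $n = 0$, the induced map is the precomposition $[X_{M^{-1}\mathbb{Z}}, Y] \to [X, Y]$, and its bijectivity follows at once from Theorem \ref{thebigdeal}(2): surjectivity from the existence of extensions along $i$ of maps $X \to Y$ with $Y$ $M$-local, and injectivity from the uniqueness of such extensions up to homotopy.

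For $n \geq 1$, fix a basepoint $g \colon X_{M^{-1}\mathbb{Z}} \to Y$ and its image $g \circ i$ in $\operatorname{Map}_{\mathcal{T}\mathrm{op}}(X, Y)$. Under the adjunction between Cartesian products of topological spaces and topological function complexes, an element of $\pi_{n}(\operatorname{Map}_{\mathcal{T}\mathrm{op}}(X, Y), g \circ i)$ corresponds to a homotopy class rel $X \times |\partial \Delta[n]|$ of maps $X \times |\Delta[n]| \to Y$ that agree with $(g \circ i) \circ \mathrm{pr}_{X}$ on the boundary slice, and similarly with $X_{M^{-1}\mathbb{Z}}$ and $g$ in place of $X$ and $g \circ i$. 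Proving that the induced map on $\pi_{n}$ is a bijection thus reduces to a parametrized extension-uniqueness statement: given a map $X \times |\Delta[n]| \to Y$ subject to the stated boundary constraint, there exists an extension $X_{M^{-1}\mathbb{Z}} \times |\Delta[n]| \to Y$ subject to the analogous constraint, and any two such extensions are homotopic rel the relevant subspace.

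I expect the main obstacle to be carrying out this parametrized extension-uniqueness for $n \geq 1$, because the adjoint reformulation replaces $Y$ with the function complex $\operatorname{Map}_{\mathcal{T}\mathrm{op}}(|\Delta[n]|, Y)$, which is not $1$-connected, so Theorem \ref{thebigdeal}(2) does not apply directly. To resolve this, I would invoke a homological obstruction argument. The obstructions to the desired extension lie in the relative cohomology groups $H^{\ast}(X_{M^{-1}\mathbb{Z}}, X; \pi_{k}(Y))$ for various $k$, whose coefficient groups are $M^{-1}\mathbb{Z}$-modules since $Y$ is $M$-local. Because $i \colon X \to X_{M^{-1}\mathbb{Z}}$ is an $M$-local equivalence of $1$-connected spaces, the localization isomorphism $H_{\ast}(X_{M^{-1}\mathbb{Z}}) \cong H_{\ast}(X) \underset{\mathbb{Z}}{\otimes} M^{-1}\mathbb{Z}$ together with the long exact sequence of the pair $(X_{M^{-1}\mathbb{Z}}, X)$ shows that every element of $H_{\ast}(X_{M^{-1}\mathbb{Z}}, X)$ is $M$-torsion. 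A change-of-rings computation along $\mathbb{Z} \to M^{-1}\mathbb{Z}$ then yields that both $\operatorname{Hom}_{\mathbb{Z}}$ and $\operatorname{Ext}^{1}_{\mathbb{Z}}$ from an $M$-torsion abelian group into an $M^{-1}\mathbb{Z}$-module vanish, so the obstructions vanish and the required extension exists; the same vanishing argument, applied to the relative cohomology of the pair $(X_{M^{-1}\mathbb{Z}} \times |\Delta[1]|, X_{M^{-1}\mathbb{Z}} \times |\partial \Delta[1]| \cup X \times |\Delta[1]|)$, handles the uniqueness.
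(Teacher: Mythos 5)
Your proposal is correct, and it is in fact more detailed than what the paper records: the paper simply declares Lemma \ref{ineedthisonetoo} a ``straightforward application of the universal property of $M$-localization'' (Theorem \ref{thebigdeal}) and gives no argument, which at face value only handles $\pi_{0}$ of the function complexes. You rightly observe that the naive adjoint use of Theorem \ref{thebigdeal}(2) breaks for higher homotopy groups because $\operatorname{Map}_{\mathcal{T}\mathrm{op}}(|\Delta[n]|, Y)$ is not $1$-connected, and your fix --- relative obstruction theory for the relative CW pair $\left(X_{M^{-1}\mathbb{Z}}, X\right)$ supplied by Theorem \ref{thebigdeal}(1), with coefficients $\pi_{k}(Y)$ (no local-coefficient issues since $Y$ is $1$-connected), using that $H_{\ast}\left(X_{M^{-1}\mathbb{Z}}, X; \mathbb{Z}\right)$ is $M$-torsion and that $\operatorname{Hom}_{\mathbb{Z}}$ and $\operatorname{Ext}^{1}_{\mathbb{Z}}$ from an $M$-torsion group into an $M^{-1}\mathbb{Z}$-module vanish (your base-change argument along $\mathbb{Z} \rightarrow M^{-1}\mathbb{Z}$ is sound, since a free $\mathbb{Z}$-resolution tensors up to a free $M^{-1}\mathbb{Z}$-resolution by flatness) --- is the standard way to get the full weak equivalence, so it buys an honest proof where the paper only gestures. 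Two small points to tighten: the obstructions for surjectivity on $\pi_{n}$ literally live in the cohomology of the product pair $\left(X_{M^{-1}\mathbb{Z}} \times |\Delta[n]|,\ \left(X \times |\Delta[n]|\right) \cup \left(X_{M^{-1}\mathbb{Z}} \times |\partial\Delta[n]|\right)\right)$, and you should note the relative K\"unneth (tensoring the relative cellular chains of $\left(X_{M^{-1}\mathbb{Z}}, X\right)$ with those of $\left(|\Delta[n]|, |\partial\Delta[n]|\right)$) identifying this, up to a shift, with $H^{\ast}\left(X_{M^{-1}\mathbb{Z}}, X; \pi_{k}(Y)\right)$ so the torsion argument applies; similarly, your pair for uniqueness should also carry the $|\Delta[n]|$ factor (i.e.\ cross $\left(X_{M^{-1}\mathbb{Z}}, X\right)$ with $\left(|\Delta[n]| \times |\Delta[1]|,\ \partial\right)$) for $n \geq 1$, though the same vanishing argument clearly covers it. With those adjustments the argument is complete.
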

We are now ready to characterize $M$-local spaces, and thus rational spaces, using function complexes.

\begin{proposition}\label{illdothisone} Let $M$ be a multiplicative subset of $\mathbb{Z}$. A $1$-connected space $Y$ is $M$-local if and only if, for every $n \geq 2$, the restriction map $\operatorname{Map}_{\mathcal{T}\mathrm{op}}(S^{n}_{M^{-1}\mathbb{Z}}, Y) \rightarrow \operatorname{Map}_{\mathcal{T}\mathrm{op}}(S^{n}, Y)$ is a weak equivalence.
\end{proposition}

\begin{proof} If $Y$ is $M$-local, then Lemma \ref{ineedthisonetoo} implies that the restriction map is a weak equivalence. Conversely, suppose that, for $n \geq 2$, the map $\operatorname{Map}_{\mathcal{T}\mathrm{op}}(S^{n}_{M^{-1}\mathbb{Z}}, Y) \rightarrow \operatorname{Map}_{\mathcal{T}\mathrm{op}}(S^{n}, Y)$ is a weak equivalence. For $n \geq 2$, let $[f]$ be an element of $\pi_{n}(Y)$ represented by a map $f \colon S^{n} \rightarrow Y$, and let $m \in M$. Then, there exists a map $g \colon S^{n}_{M^{-1}\mathbb{Z}} \rightarrow Y$, unique up to homotopy, such that the diagram
\[\begin{tikzcd}[row sep=3mm, column sep=3mm, ampersand replacement=\&]
	{S^{n}} \&\& {S^{n}_{M^{-1}\mathbb{Z}}} \\
	\\
	\&\& Y
	\arrow["{\tau}", from=1-1, to=1-3]
	\arrow["f"', from=1-1, to=3-3]
	\arrow["g"', from=1-3, to=3-3]
\end{tikzcd}\]
commutes up to homotopy, where $\tau$ denotes the inclusion. In particular, homotopy commutativity implies that \[[f]=f_{\ast}([\operatorname{id}_{S^{n}}])=g_{\ast}\tau_{\ast}([\operatorname{id}_{S^{n}}])=g_{\ast}([\tau]).\] As $S^{n}_{M^{-1}\mathbb{Z}}$ is $M$-local, there exists a unique element $\dfrac{1}{m} \cdot [\tau]$, so we uniquely define $\dfrac{1}{m} \cdot [f]=g_{\ast}\left(\dfrac{1}{m} \cdot [\tau]\right)$.
\end{proof}
As our function complexes of $1$-reduced simplicial sets are pointed, we need the pointed version of Proposition \ref{illdothisone}. We write $\mathcal{T}\mathrm{op}_{\ast}$ for the category of pointed spaces. For two pointed spaces $X$ and $Y$, the function complex $\operatorname{Map}_{\mathcal{T}\mathrm{op}_{\ast}}(X, Y)$ is the simplicial set with $n$-simplices $\operatorname{Map}_{\mathcal{T}\mathrm{op}_{\ast}}(X, Y)_{n}=\operatorname{Hom}_{\mathcal{T}\mathrm{op}_{\ast}}(X \wedge |\Delta[n]|_{+}, Y)$. Again, $\pi_{0}(\operatorname{Map}_{\mathcal{T}\mathrm{op}_{\ast}}(X, Y))=[X, Y]_{\ast}$ is the set of pointed homotopy classes of pointed maps from $X$ to $Y$. We state the pointed version of Proposition \ref{illdothisone}, which is proven identically.

\begin{proposition}\label{illdothisonept} Let $M$ be a multiplicative subset of $\mathbb{Z}$. A $1$-connected pointed space $Y$ is $M$-local if and only if, for every $n \geq 2$, the restriction map $\operatorname{Map}_{\mathcal{T}\mathrm{op}_{\ast}}(S^{n}_{M^{-1}\mathbb{Z}}, Y) \rightarrow \operatorname{Map}_{\mathcal{T}\mathrm{op}_{\ast}}(S^{n}, Y)$ is a weak equivalence.
\end{proposition}

\subsection{A characterization of rational homotopy equivalences}\label{section6.3}
For a multiplicative subset $M$ of $\mathbb{Z}$, we also characterize the $M$-local homotopy equivalences using function complexes in preparation for the left Bousfield localization in Section \ref{section7}. The following lemma, together with its pointed analog, underpins our arguments.

\begin{lemma}[{\cite[Corollary 9.3.3(2)]{Hirschhorn2003}}]\label{citeadnauseam}

\begin{enumerate}

\item If $f \colon X \rightarrow Y$ is a weak equivalence of simplicial sets and $Z$ is a Kan complex, then $\operatorname{Map}_{\mathcal{SS}\mathrm{ets}}(f, Z) \colon \operatorname{Map}_{\mathcal{SS}\mathrm{ets}}(Y, Z) \rightarrow \operatorname{Map}_{\mathcal{SS}\mathrm{ets}}(X, Z)$ is a weak equivalence.

\item If $f \colon X \rightarrow Y$ is a weak homotopy equivalence of CW complexes and $Z$ is a topological space, then $\operatorname{Map}_{\mathcal{T}\mathrm{op}}(f, Z) \colon \operatorname{Map}_{\mathcal{T}\mathrm{op}}(Y, Z) \rightarrow \operatorname{Map}_{\mathcal{T}\mathrm{op}}(X, Z)$ is a weak equivalence.

\end{enumerate}

\end{lemma}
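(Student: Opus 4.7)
The plan is to deduce both parts from the homotopy lifting--extension axiom (SM7) for the simplicial model structures on $\mathcal{SS}\mathrm{ets}$ and on $\mathcal{T}\mathrm{op}$, using a factorization-and-section argument that splits off the contribution of acyclic cofibrations (handled directly by SM7) from that of acyclic fibrations (handled via a section).

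For Part (1), I would factor the weak equivalence $f \colon X \rightarrow Y$ in $\mathcal{SS}\mathrm{ets}$ as $X \xrightarrow{i} W \xrightarrow{p} Y$ with $i$ an acyclic cofibration and $p$ a fibration, so $p$ is an acyclic fibration by 2-out-of-3; then $f^{\ast} = i^{\ast} \circ p^{\ast}$. Since $Z$ is a Kan complex, the unique map $Z \to \Delta[0]$ is a fibration, and SM7 applied to the acyclic cofibration $i$ and this fibration gives that $i^{\ast} \colon \operatorname{Map}_{\mathcal{SS}\mathrm{ets}}(W, Z) \to \operatorname{Map}_{\mathcal{SS}\mathrm{ets}}(X, Z)$ is an acyclic Kan fibration, hence a weak homotopy equivalence.

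For $p^{\ast}$, SM7 does not directly apply. I would exploit the fact that every simplicial set is cofibrant, so the acyclic fibration $p$ admits a section $s \colon Y \to W$ with $p \circ s = \operatorname{id}_{Y}$, giving $s^{\ast} \circ p^{\ast} = \operatorname{id}$. To produce a homotopy in the other direction, I would solve the lifting problem posed by the cofibration $W \times \partial\Delta[1] \hookrightarrow W \times \Delta[1]$, the data $(s \circ p, \operatorname{id}_{W}) \colon W \times \partial\Delta[1] \to W$ and $p \circ \operatorname{pr}_{W} \colon W \times \Delta[1] \to Y$, and the acyclic fibration $p$; the resulting lift is a simplicial homotopy $h$ from $s \circ p$ to $\operatorname{id}_{W}$. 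Applying $\operatorname{Map}_{\mathcal{SS}\mathrm{ets}}(-, Z)$ yields a simplicial homotopy $p^{\ast} \circ s^{\ast} \simeq \operatorname{id}$, so $p^{\ast}$ and $s^{\ast}$ are mutually inverse homotopy equivalences of Kan complexes---both mapping complexes are fibrant by SM7 applied to the cofibrations $\emptyset \to X, W$---and hence weak homotopy equivalences. Consequently $f^{\ast} = i^{\ast} \circ p^{\ast}$ is a weak homotopy equivalence.

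Part (2) follows along identical lines in $\mathcal{T}\mathrm{op}$ equipped with its standard simplicial enrichment $\operatorname{Map}_{\mathcal{T}\mathrm{op}}(X, Y)_{n} = \operatorname{Hom}_{\mathcal{T}\mathrm{op}}(X \times |\Delta[n]|, Y)$: factor $f \colon X \to Y$ in the Quillen model structure, re-run the SM7 argument on the acyclic cofibration factor using that every topological space is fibrant, and re-run the section-and-homotopy argument on the acyclic fibration factor using that CW complexes are cofibrant. I expect the main obstacle to be making the transition from the pointwise homotopy $s \circ p \simeq \operatorname{id}$ to the weak equivalence statement for $p^{\ast}$ fully rigorous; if the direct argument becomes unwieldy, the cleaner fallback is to invoke the contravariant form of K.~Brown's lemma applied to $\operatorname{Map}(-, Z)$, which, combined with the SM7-based observation that this functor sends acyclic cofibrations to weak equivalences, delivers the conclusion for weak equivalences between cofibrant objects in one stroke---and this exactly covers the hypotheses, since every simplicial set is cofibrant in $\mathcal{SS}\mathrm{ets}$ and CW complexes are the cofibrant objects in $\mathcal{T}\mathrm{op}$.
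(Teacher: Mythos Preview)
The paper does not supply its own proof of this lemma; it is stated as a citation of \cite[Corollary 9.3.3(2)]{Hirschhorn2003} and used as a black box. So there is no in-paper argument to compare against.

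Your proposal is correct and is essentially the standard proof. The factorization-and-section argument works as you describe, and your instinct that the Ken Brown route is the cleaner packaging is right: in Hirschhorn's treatment, Corollary 9.3.3 is deduced from the fact that $\operatorname{Map}(-,Z)$ takes acyclic cofibrations between cofibrant objects to weak equivalences (via SM7 with $Z$ fibrant), and then Ken Brown's lemma upgrades this to all weak equivalences between cofibrant objects. Your direct handling of the acyclic-fibration factor is also fine; the passage from the simplicial homotopy $h \colon W \times \Delta[1] \to W$ to a simplicial homotopy $\operatorname{Map}(W,Z) \times \Delta[1] \to \operatorname{Map}(W,Z)$ goes through the tensor--cotensor adjunction $\operatorname{Map}(W \times \Delta[1], Z) \cong \operatorname{Map}(W,Z)^{\Delta[1]}$, which is part of the simplicial enrichment, so there is no hidden obstacle there. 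One small point in Part~(2): after factoring, the intermediate object $W$ need not be a CW complex, but it is cofibrant (as $X$ is cofibrant and $i$ is a cofibration), and for the section of $p$ you only need the target $Y$ to be cofibrant, which it is by hypothesis---so the argument goes through unchanged.
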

We characterize the $M$-local homotopy equivalences, and thus the rational homotopy equivalences, as follows.

\begin{proposition}\label{illdothisonetoo} Let $M$ be a multiplicative subset of $\mathbb{Z}$. Let $f \colon X \rightarrow Y$ be a map of $1$-connected CW complexes. The following statements are equivalent.

\begin{enumerate}

\item The map $f$ is an $M$-local homotopy equivalence.

\item The induced map on $M$-localizations $f_{M^{-1}\mathbb{Z}} \colon X_{M^{-1}\mathbb{Z}} \rightarrow Y_{M^{-1}\mathbb{Z}}$ is a weak homotopy equivalence.

\item The induced map on $M$-localizations $f_{M^{-1}\mathbb{Z}} \colon X_{M^{-1}\mathbb{Z}} \rightarrow Y_{M^{-1}\mathbb{Z}}$ is a homotopy equivalence.

\item For every $M$-local space $Z$, the map $\operatorname{Map}_{\mathcal{T}\mathrm{op}}(f_{M^{-1}\mathbb{Z}}, Z)$ is a weak equivalence.

\item For every $M$-local space $Z$, the map $\operatorname{Map}_{\mathcal{T}\mathrm{op}}(f, Z)$ is a weak equivalence.

\end{enumerate}

\end{proposition}

\begin{proof} Firstly, (1) and (2) are equivalent by applying Example \ref{defrhtdef2} to the commutative diagram
\[\begin{tikzcd}[row sep=3mm, column sep=3mm, ampersand replacement=\&]
	X \&\& Y \\
	\\
	{X_{M^{-1}\mathbb{Z}}} \&\& {Y_{M^{-1}\mathbb{Z}}}
	\arrow[from=1-1, to=1-3]
	\arrow[from=1-1, to=3-1]
	\arrow[from=1-3, to=3-3]
	\arrow[from=3-1, to=3-3]
\end{tikzcd}\]
whose vertical maps are $M$-local homotopy equivalences. Then, (2) and (3) are equivalent by the Whitehead theorem, as both $X_{M^{-1}\mathbb{Z}}$ and $Y_{M^{-1}\mathbb{Z}}$ are CW complexes by Remark \ref{cwbusiness}. Moreover, (4) and (5) are equivalent because, for an $M$-local space $Z$, we have the commutative diagram
\[\begin{tikzcd}[row sep=3mm, column sep=3mm, ampersand replacement=\&]
	{\operatorname{Map}_{\mathcal{T}\mathrm{op}}(Y_{M^{-1}\mathbb{Z}}, Z)} \&\& {\operatorname{Map}_{\mathcal{T}\mathrm{op}}(X_{M^{-1}\mathbb{Z}}, Z)} \\
	\\
	{\operatorname{Map}_{\mathcal{T}\mathrm{op}}(Y, Z)} \&\& {\operatorname{Map}_{\mathcal{T}\mathrm{op}}(X, Z)}
	\arrow[from=1-1, to=1-3]
	\arrow[from=1-1, to=3-1]
	\arrow[from=1-3, to=3-3]
	\arrow[from=3-1, to=3-3]
\end{tikzcd}\]
whose vertical maps are weak equivalences by Lemma \ref{ineedthisonetoo}. It remains to show that (3) and (4) are equivalent. If (3) holds, then Lemma \ref{citeadnauseam}(2) implies that, for every space $Z$, the map $\operatorname{Map}_{\mathcal{T}\mathrm{op}}(f_{M^{-1}\mathbb{Z}}, Z)$ is a weak equivalence. Conversely, if (4) holds, then using the bijection of path components $[f_{M^{-1}\mathbb{Z}}, Z] \colon [Y_{M^{-1}\mathbb{Z}}, Z] \xrightarrow{\cong} [X_{M^{-1}\mathbb{Z}}, Z]$ for $Z=X_{M^{-1}\mathbb{Z}}$ and $Z=Y_{M^{-1}\mathbb{Z}}$ yields a homotopy inverse of $f_{M^{-1}\mathbb{Z}}$.
\end{proof}
It is the equivalence of (1) and (5) that is relevant to our work in Section \ref{section7}, namely its pointed version.

\begin{proposition}\label{illdothisonetoopt} Let $M$ be a multiplicative subset of $\mathbb{Z}$. A pointed map of $1$-connected CW complexes $f \colon X \rightarrow Y$ is an $M$-local homotopy equivalence if and only if, for every $M$-local pointed space $Z$, the map $\operatorname{Map}_{\mathcal{T}\mathrm{op}_{\ast}}(f, Z) \colon \operatorname{Map}_{\mathcal{T}\mathrm{op}_{\ast}}(Y, Z) \rightarrow \operatorname{Map}_{\mathcal{T}\mathrm{op}_{\ast}}(X, Z)$ is a weak equivalence.
\end{proposition}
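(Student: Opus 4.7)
The plan is to mirror the five-step chain of equivalences in Proposition \ref{illdothisonetoo} in the pointed setting. Introduce auxiliary conditions by taking the pointed analogs of (2), (3), (4) in Proposition \ref{illdothisonetoo}: namely (2') $f_{M^{-1}\mathbb{Z}}$ is a weak homotopy equivalence; (3') $f_{M^{-1}\mathbb{Z}}$ is a pointed homotopy equivalence; (4') for every $M$-local pointed $Z$, the map $f_{M^{-1}\mathbb{Z}}^{\ast} \colon \operatorname{Map}_{\mathcal{T}\mathrm{op}_{\ast}}(Y_{M^{-1}\mathbb{Z}}, Z) \to \operatorname{Map}_{\mathcal{T}\mathrm{op}_{\ast}}(X_{M^{-1}\mathbb{Z}}, Z)$ is a weak homotopy equivalence. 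I aim to establish (1) $\Leftrightarrow$ (2') $\Leftrightarrow$ (3') $\Leftrightarrow$ (4') $\Leftrightarrow$ (5).

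For (1) $\Leftrightarrow$ (2'), the same naturality square as in the unpointed proof works: the vertical $M$-localization maps are $M$-local homotopy equivalences by Theorem \ref{thebigdeal}, and Example \ref{defrhtdef2} closes the argument. For (2') $\Leftrightarrow$ (3'), I invoke the pointed Whitehead theorem, which applies because the relative CW structure $(X_{M^{-1}\mathbb{Z}}, X)$ from Theorem \ref{thebigdeal}(1) has no $0$-cells, so both $X_{M^{-1}\mathbb{Z}}$ and $Y_{M^{-1}\mathbb{Z}}$ are well-pointed CW complexes by Remark \ref{cwbusiness}. The implication (3') $\Rightarrow$ (4') uses the pointed analog of Lemma \ref{citeadnauseam}(2) (i.e., that pointed homotopy equivalences of pointed CW complexes induce weak homotopy equivalences on pointed mapping complexes); conversely, (4') $\Rightarrow$ (3') by extracting a pointed homotopy inverse of $f_{M^{-1}\mathbb{Z}}$ from the induced bijection on $\pi_{0}$ evaluated at $Z = X_{M^{-1}\mathbb{Z}}$ and $Z = Y_{M^{-1}\mathbb{Z}}$, which are themselves $M$-local.

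The final step (4') $\Leftrightarrow$ (5) rests on a pointed analog of Lemma \ref{ineedthisonetoo}: for an $M$-local pointed space $Z$ and any pointed $1$-connected CW complex $A$, the restriction map $\operatorname{Map}_{\mathcal{T}\mathrm{op}_{\ast}}(A_{M^{-1}\mathbb{Z}}, Z) \to \operatorname{Map}_{\mathcal{T}\mathrm{op}_{\ast}}(A, Z)$ is a weak homotopy equivalence. This is the lemma I expect to be the main obstacle, and I would deduce it from the universal property in Theorem \ref{thebigdeal}(2) together with the observation that the relative CW structure on $(A_{M^{-1}\mathbb{Z}}, A)$ has no $0$- or $1$-cells, so the extensions and homotopies produced by Theorem \ref{thebigdeal}(2) can be chosen to fix the basepoint and, more generally, to be compatible with prescribed boundary data on $A \wedge |\Delta[n]|_{+}$. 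Granted this, the pointed analog of the naturality square in the unpointed proof, combined with the two-out-of-three property, shows (4') $\Leftrightarrow$ (5). The only genuine difficulty in the whole proof is ensuring that Theorem \ref{thebigdeal}(2) really lifts from a statement about pointed homotopy classes to one about pointed mapping complexes; once that is verified, every other step transfers verbatim from the unpointed proof.
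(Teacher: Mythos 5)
Your proposal is correct and follows essentially the same route as the paper, which states that Proposition \ref{illdothisonetoopt} is the pointed version of the equivalence (1)$\Leftrightarrow$(5) in Proposition \ref{illdothisonetoo} and ``is proven identically,'' i.e.\ by running the same chain of equivalences with pointed function complexes, the pointed Whitehead theorem, and the pointed analogs of Lemmas \ref{ineedthisonetoo} and \ref{citeadnauseam}. Your attention to upgrading Theorem \ref{thebigdeal}(2) to a basepoint-preserving, relative statement is a reasonable elaboration of a point the paper leaves implicit.
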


\section{The family of model structures on $1$-reduced simplicial sets}\label{section7}
Let $M$ be a multiplicative subset of $\mathbb{Z}$. We left Bousfield localize the model category ${\mathcal{SS}\mathrm{ets}}_{1}$ to get a model category $\mathcal{L}_{M^{-1}\mathbb{Z}}{\mathcal{SS}\mathrm{ets}}_{1}$ in which the weak equivalences are the $M$-local homotopy equivalences and the fibrant objects are the $M$-local Kan complexes. In particular, we obtain a model category $\mathcal{L}_{\mathbb{Q}}{\mathcal{SS}\mathrm{ets}}_{1}$ whose weak equivalences are the rational homotopy equivalences and whose fibrant objects are the rational Kan complexes. Thus, we recover the family of model structures on $1$-reduced simplicial sets from \cite[Part II, Theorem 2.2]{Quillen1969} together with the characterization of fibrant objects in \cite[Part II, Corollary 2.6]{Quillen1969}. These model structures inherit the generating cofibrations and the simplicial structure that we found for ${\mathcal{SS}\mathrm{ets}}_{1}$.

\subsection{Left Bousfield localization}\label{section7.1} We first provide an overview of left Bousfield localization; a reference is \cite[Chapters 1-4]{Hirschhorn2003}.

Let $\mathcal{M}$ be a simplicial model category; in particular, for two objects $X$ and $Y$, we have a function complex $\operatorname{Map}_{\mathcal{M}}(X, Y)$. Though one can left Bousfield localize model categories that are not simplicial using homotopy function complexes \cite[Chapter 17]{Hirschhorn2003}, we only localize simplicial model categories in this paper, and our arguments are contingent upon our explicit understanding of the function complexes in our model categories.

Let $T$ be a set of morphisms in $\mathcal{M}$; in practice, it is important that $T$ is not a proper class.

\begin{definition}\label{deflblsdef}

\begin{enumerate}

\item An object $K$ of $\mathcal{M}$ is \emph{$T$-local} if $K$ is fibrant in $\mathcal{M}$ and, for every morphism $\lambda \colon A \rightarrow B$ in $T$, the map $\operatorname{Map}_{\mathcal{M}}(\lambda, K) \colon \operatorname{Map}_{\mathcal{M}}(B, K) \rightarrow \operatorname{Map}_{\mathcal{M}}(A, K)$ is a weak equivalence.

\item A morphism $f \colon X \rightarrow Y$ in $\mathcal{M}$ is a \emph{$T$-local equivalence} if, for every $T$-local object $K$, the map $\operatorname{Map}_{\mathcal{M}}(f, K) \colon \operatorname{Map}_{\mathcal{M}}(Y, K) \rightarrow \operatorname{Map}_{\mathcal{M}}(X, K)$ is a weak equivalence.

\item A \emph{$T$-localization} of an object $X$ of $\mathcal{M}$ is a $T$-local equivalence with $T$-local target $f \colon X \rightarrow X_{T}$.

\end{enumerate}

\end{definition}

\begin{remark}\label{deflblsdef1} Every morphism in $T$ and every weak equivalence in $\mathcal{M}$ is a $T$-local equivalence. Conversely, every $T$-local equivalence of $T$-local objects is a weak equivalence in $\mathcal{M}$ {\cite[Theorem 3.2.13(1)]{Hirschhorn2003}}.
\end{remark}
In light of our remark, the left Bousfield localization of $\mathcal{M}$ at $T$ enlarges the class of weak equivalences to incorporate the maps in $T$ while preserving the class of cofibrations. Consequently, the class of fibrations, and thus the class of fibrant objects, is forced to shrink. Though the fibrations in a left Bousfield localization can be quite difficult to describe, at least the fibrant objects are always known to us.

\begin{theorem}[{\cite[Theorem 4.7]{Barwick2010}}]\label{lblexists} Let $\mathcal{M}$ be a combinatorial, left proper, and simplicial model category. Let $T$ be a set of morphisms in $\mathcal{M}$. There is a model structure $\mathcal{L}_{T}\mathcal{M}$ on the underlying category of $\mathcal{M}$, called the \emph{left Bousfield localization of $\mathcal{M}$ at $T$}, in which a morphism $f$ is

\begin{enumerate}

\item a weak equivalence if and only if $f$ is a $T$-local equivalence; and

\item a cofibration if and only if $f$ is a cofibration in $\mathcal{M}$.

\end{enumerate}
An object $K$ is fibrant in $\mathcal{L}_{T}\mathcal{M}$ if and only if $K$ is $T$-local. The model category $\mathcal{L}_{T}\mathcal{M}$ is combinatorial and left proper, and it inherits the simplicial structure of $\mathcal{M}$.
\end{theorem}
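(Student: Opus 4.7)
The plan is to obtain $\mathcal{L}_T\mathcal{M}$ via Jeff Smith's recognition theorem for combinatorial model structures: I would declare the cofibrations to be those of $\mathcal{M}$ and the weak equivalences to be the class $W_T$ of $T$-local equivalences, and then verify the hypotheses that let Smith's theorem promote these two classes to a model structure. The task then splits into two separable pieces: establishing the abstract closure properties of $W_T$ needed by Smith, and constructing enough generating acyclic cofibrations to identify the fibrant objects.

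First I would dispose of the formal properties of $W_T$. Closure under the two-out-of-three property and under retracts is immediate from the corresponding properties in simplicial sets applied to the complexes $\operatorname{Map}_{\mathcal{M}}(-,K)$ as $K$ ranges over $T$-local objects. Accessibility of $W_T$ as a full subcategory of the arrow category is the technical crux: using that $\mathcal{M}$ is combinatorial, I would argue that the subcategory of $T$-local objects is accessibly embedded, and that $T$-local equivalences can be detected by testing against a set of $T$-local objects rather than the whole class. Standard results on accessible categories then imply that $W_T$ is itself an accessibly embedded accessible subcategory, which is exactly the input Smith's theorem requires. Containment of the $\mathcal{M}$-acyclic fibrations in $W_T$ is immediate because weak equivalences of $\mathcal{M}$ are already $T$-local equivalences.

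Next I would verify the pushout-and-transfinite-composition stability of the class of cofibrations that are $T$-local equivalences. Given a pushout along a cofibration, left properness of $\mathcal{M}$ ensures that cofibrant replacement is compatible with the pushout up to weak equivalence in $\mathcal{M}$, and then the fact that $\operatorname{Map}_{\mathcal{M}}(-,K)$ converts homotopy pushouts of cofibrations into homotopy pullbacks of Kan fibrations yields the desired closure. Closure under transfinite composition follows from the accessibility properties of the complexes $\operatorname{Map}_{\mathcal{M}}(-,K)$. Left properness of the resulting $\mathcal{L}_T\mathcal{M}$ is then automatic, since $T$-local equivalences are detected by mapping into $T$-local objects, and the enrichment over simplicial sets is inherited because the cofibrations and fibrant objects used to run the pushout-product axiom are closely related to those of $\mathcal{M}$.

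Finally, to identify the fibrant objects as the $T$-local objects, I would augment the generating acyclic cofibrations $J$ of $\mathcal{M}$ by a set of ``horns on $T$'': for each $\varphi\colon A\to B$ in $T$, after passing to a cofibrant replacement, I would take the pushout-product of $\varphi$ with the boundary inclusions $\partial\Delta[n]\hookrightarrow\Delta[n]$ using the simplicial tensor of $\mathcal{M}$. A standard adjunction and two-out-of-three argument then identifies the right lifting property against this set with the condition that $\varphi^{\ast}\colon \operatorname{Map}_{\mathcal{M}}(B,K)\to\operatorname{Map}_{\mathcal{M}}(A,K)$ be an acyclic Kan fibration, which together with fibrancy of $K$ in $\mathcal{M}$ amounts to $T$-locality. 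The part I expect to be genuinely difficult is the accessibility of $W_T$; this is where the combinatoriality of $\mathcal{M}$ is essential and where, historically, the proofs of Smith, Barwick, and Hirschhorn all expend the most technical effort.
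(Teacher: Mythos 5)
The paper does not prove this statement at all—it is imported verbatim from Barwick's Theorem 4.7 (with the fibrant-object characterization as in Hirschhorn)—so there is no internal argument to compare against, only the cited source. Your sketch follows essentially that source's route (Smith's recognition theorem applied to the cofibrations of $\mathcal{M}$ and the class of $T$-local equivalences, with accessibility of that class as the technical crux, and an augmented set of horns on $T$ to pin down the fibrant objects), and it is sound as an outline; the one step you gloss is that identifying the objects with the right lifting property against $J$ together with the horns on $T$ as exactly the $T$-local objects does not by itself identify them with the fibrant objects of $\mathcal{L}_{T}\mathcal{M}$—one must still show that every cofibration that is a $T$-local equivalence lifts against $T$-local objects, which is precisely where left properness and the simplicial structure are used in Barwick's and Hirschhorn's arguments.
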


\begin{remark}\label{lblexistsrmk} Note that a $T$-localization in $\mathcal{M}$ is a fibrant replacement in $\mathcal{L}_{T}\mathcal{M}$.
\end{remark}

As the class of cofibrations is preserved, $\mathcal{L}_{T}\mathcal{M}$ inherits the generating cofibrations of the combinatorial model category $\mathcal{M}$. However, the same is not true for the generating acyclic cofibrations because the class of weak equivalences is expanded and, in general, it is difficult to describe the smaller class of fibrations in a left Bousfield localization. Knowing the fibrant objects in a left Bousfield localization makes it feasible to write down a set of maps that detects them by tweaking any set that detects fibrant objects in $\mathcal{M}$; however, this set can get cumbersome to write out, which defeats its practical purpose. In our left Bousfield localizations in this paper, we have no nice description of such a set that detects fibrant objects, so we avoid writing one out altogether.

\subsection{The set for the left Bousfield localization}\label{section7.2} For a multiplicative subset $M$ of $\mathbb{Z}$, our characterization of $M$-local spaces from Proposition \ref{illdothisonept} suggests that we produce a $1$-reduced simplicial analog of the set of maps \[\{S^{n} \rightarrow S^{n}_{M^{-1}\mathbb{Z}} \mid n \geq 2\}\] in order to left Bousfield localize the model category ${\mathcal{SS}\mathrm{ets}}_{1}$ at that set. A first suggestion is to start with the simplicial $n$-sphere $S_{\mathrm{simp}}^{n}$ and use it to emulate the construction of $S^{n}_{M^{-1}\mathbb{Z}}$ in the $1$-reduced setting. This approach runs into a problem: the construction of $S^{n}_{M^{-1}\mathbb{Z}}$ involves self-maps of $S^{n}$ of arbitrary degree, whereas, by the following lemma, the simplicial $n$-sphere $S_{\mathrm{simp}}^{n}$ only has the trivial self-maps.

\begin{lemma}\label{oopsnoselfmaps} For $n \geq 2$, the only self-maps of $S_{\mathrm{simp}}^{n}$ are the constant map and the identity map.
\end{lemma}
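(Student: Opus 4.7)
The plan is to use the universal property of the quotient $S_{\mathrm{simp}}^{n}=\Delta[n]/\partial\Delta[n]$: a self-map $f$ of $S_{\mathrm{simp}}^{n}$ is the same as a simplicial map $\widetilde{f} \colon \Delta[n] \to S_{\mathrm{simp}}^{n}$ that collapses $\partial\Delta[n]$ to the basepoint. By the Yoneda lemma, $\widetilde{f}$ is determined by $\widetilde{f}(\sigma_n)$, the image of the unique non-degenerate $n$-simplex of $\Delta[n]$; the boundary condition amounts to requiring that every face of $\widetilde{f}(\sigma_n)$ be the unique iterated degeneracy of the basepoint in dimension $n-1$.

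Next, I would enumerate the $n$-simplices of $S_{\mathrm{simp}}^{n}$. By the Eilenberg--Zilber lemma, every simplex is uniquely a degeneracy of a non-degenerate simplex, so the $n$-simplices of $S_{\mathrm{simp}}^{n}$ correspond to pairs of a non-degenerate simplex and a surjection onto its dimension. Since the only non-degenerate simplices are the basepoint in dimension $0$ and $\epsilon$ in dimension $n$, there are exactly two $n$-simplices of $S_{\mathrm{simp}}^{n}$: the simplex $\epsilon$ itself, and the unique $n$-fold iterated degeneracy of the basepoint. Both of these automatically satisfy the boundary constraint: for $\epsilon$, because $\partial\Delta[n]$ collapses in the quotient; for the degenerate candidate, because every face of a degeneracy of the basepoint is again a degeneracy of the basepoint.

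Finally, I would analyze the two resulting cases. Sending $\widetilde{f}(\sigma_n) = \epsilon$ recovers the quotient map $\Delta[n] \to S_{\mathrm{simp}}^{n}$, and this descends to the identity on $S_{\mathrm{simp}}^{n}$. Sending $\widetilde{f}(\sigma_n)$ to the iterated degeneracy of the basepoint forces $\widetilde{f}$ to land entirely in the degenerate simplices of the basepoint, so it descends to the constant map at the basepoint. There is no real obstacle here; the one step that warrants some care is verifying the uniqueness of the $n$-fold iterated degeneracy of a $0$-simplex, which follows from the simplicial identities $s_j s_i = s_i s_{j-1}$ (for $j > i$) allowing any word in the $s_k$ applied to a $0$-simplex to be rewritten as $s_0^{n}$.
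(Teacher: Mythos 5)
Your proof is correct and takes essentially the same route as the paper: a self-map is determined by the image of the non-degenerate $n$-simplex $\epsilon$ (equivalently, of $\sigma_{n}$ under the induced map on $\Delta[n]$), there are exactly two admissible $n$-simplices it can hit, and the two cases yield the identity and the constant map. You simply spell out, via the quotient description $\Delta[n]/\partial\Delta[n]$, Yoneda, and the Eilenberg--Zilber enumeration, the details that the paper's terser argument leaves implicit.
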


\begin{proof} Let $f$ be a self-map of $S_{\mathrm{simp}}^{n}$. Let $x$ denote the $0$-simplex of $S_{\mathrm{simp}}^{n}$, and let $\epsilon$ denote the non-degenerate $n$-simplex of $S_{\mathrm{simp}}^{n}$. If $f_{n}(\epsilon)=\epsilon$, then $f$ is the identity map of $S_{\mathrm{simp}}^{n}$. If $f_{n}(\epsilon)$ is the iterated degeneracy of $x$, then $f$ is the constant self-map of $S_{\mathrm{simp}}^{n}$.
\end{proof}
Thus, we have to settle for an indirect route. We start with the CW inclusion $S^{n} \rightarrow S^{n}_{M^{-1}\mathbb{Z}}$, and we apply the singular complex functor to obtain the inclusion of simplicial sets $\operatorname{Sing}(S^{n}) \rightarrow \operatorname{Sing}(S^{n}_{M^{-1}\mathbb{Z}})$. Then, we take $1$st Eilenberg subcomplexes to get the inclusion of $1$-reduced simplicial sets \[E_{1}\operatorname{Sing}(S^{n}) \rightarrow E_{1}\operatorname{Sing}(S^{n}_{M^{-1}\mathbb{Z}}).\] To confirm that this map is our desired analog, we study the commutative diagram of CW complexes
\[\begin{tikzcd}[row sep=3mm, column sep=3mm, ampersand replacement=\&]
	{|E_{1}\operatorname{Sing}(S^{n})|} \&\& {|\operatorname{Sing}(S^{n})|} \&\& {S^{n}} \\
	\\
	{|E_{1}\operatorname{Sing}(S^{n}_{M^{-1}\mathbb{Z}})|} \&\& {|\operatorname{Sing}(S^{n}_{M^{-1}\mathbb{Z}})|} \&\& {S^{n}_{M^{-1}\mathbb{Z}},}
	\arrow[from=1-1, to=1-3]
	\arrow[from=1-1, to=3-1]
	\arrow[from=1-3, to=1-5]
	\arrow[from=1-3, to=3-3]
	\arrow[from=1-5, to=3-5]
	\arrow[from=3-1, to=3-3]
	\arrow[from=3-3, to=3-5]
\end{tikzcd}\]
where the right-hand horizontal maps arise from the geometric realization-singular complex adjunction.

\begin{proposition}\label{comdiagramheqs} Let $M$ be a multiplicative subset of $\mathbb{Z}$. Let $n \geq 2$. In the commutative diagram
\[\begin{tikzcd}[row sep=3mm, column sep=3mm, ampersand replacement=\&]
	{|E_{1}\operatorname{Sing}(S^{n})|} \&\& {|\operatorname{Sing}(S^{n})|} \&\& {S^{n}} \\
	\\
	{|E_{1}\operatorname{Sing}(S^{n}_{M^{-1}\mathbb{Z}})|} \&\& {|\operatorname{Sing}(S^{n}_{M^{-1}\mathbb{Z}})|} \&\& {S^{n}_{M^{-1}\mathbb{Z}},}
	\arrow[from=1-1, to=1-3]
	\arrow[from=1-1, to=3-1]
	\arrow[from=1-3, to=1-5]
	\arrow[from=1-3, to=3-3]
	\arrow[from=1-5, to=3-5]
	\arrow[from=3-1, to=3-3]
	\arrow[from=3-3, to=3-5]
\end{tikzcd}\]
all the horizontal maps are weak homotopy equivalences.
\end{proposition}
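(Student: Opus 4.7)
The plan is to handle the two columns of horizontal maps separately, since they arise from two independent general facts recorded earlier in the paper, and then combine these with the fact that geometric realization preserves weak homotopy equivalences.

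First, I would observe that the horizontal maps in the right-hand square are precisely the natural counit maps $|\operatorname{Sing}(Y)| \to Y$ of the geometric realization--singular complex adjunction from Proposition \ref{actuallysimpadj}, instantiated at $Y = S^{n}$ and $Y = S^{n}_{M^{-1}\mathbb{Z}}$. These are weak homotopy equivalences directly by Theorem \ref{quilleneqsecretly}(2), which applies to any topological space. So the right-hand square requires no additional work.

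For the left-hand square, I would invoke Corollary \ref{mo67cor}, which states that for a pointed and $1$-connected topological space $Y$, the $1$st Eilenberg subcomplex inclusion $E_{1}\operatorname{Sing}(Y) \to \operatorname{Sing}(Y)$ is a weak homotopy equivalence. To apply this, I need to check pointedness and $1$-connectedness of $S^{n}$ and $S^{n}_{M^{-1}\mathbb{Z}}$. Both are pointed (we view $S^n$ with a chosen basepoint, and $S^{n}_{M^{-1}\mathbb{Z}}$ inherits a basepoint via the inclusion $S^n \to S^{n}_{M^{-1}\mathbb{Z}}$ from Theorem \ref{thebigdeal}(1)). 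For $n \geq 2$, $S^{n}$ is $1$-connected. Likewise, $S^{n}_{M^{-1}\mathbb{Z}}$ is $M$-local, hence by Definition \ref{defrhtdef} has $1$-connected homotopy type; alternatively, by Theorem \ref{thebigdeal}(1) it is built from $S^n$ by attaching cells of dimension $\geq 2$, so it remains $1$-connected. Therefore the simplicial map $E_{1}\operatorname{Sing}(Y) \to \operatorname{Sing}(Y)$ is a weak homotopy equivalence for each of the two relevant choices of $Y$.

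Finally, to conclude that the induced maps on geometric realizations $|E_{1}\operatorname{Sing}(Y)| \to |\operatorname{Sing}(Y)|$ are weak homotopy equivalences, I would use the fact that the geometric realization functor preserves weak homotopy equivalences (this is a standard property, provable e.g.\ by the fact that $|-|$ is a left Quillen functor between the model categories $\mathcal{SS}\mathrm{ets}$ and $\mathcal{T}\mathrm{op}$ with all objects cofibrant on the simplicial side; or, even more directly, by combining Theorem \ref{quilleneqsecretly}(2) with the two-out-of-three property applied to a naturality square for the counit). There is no serious obstacle: the entire argument is a straightforward assembly of Theorem \ref{quilleneqsecretly}(2) and Corollary \ref{mo67cor}, with a brief check that $S^{n}_{M^{-1}\mathbb{Z}}$ satisfies the hypotheses of the latter. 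The only point requiring the slightest care is ensuring that the basepoints are chosen compatibly across the diagram, which is immediate from the relative CW structure in Theorem \ref{thebigdeal}(1).
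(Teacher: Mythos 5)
Your proposal is correct and follows essentially the same route as the paper's proof: the right-hand square is handled by Theorem \ref{quilleneqsecretly}(2) and the left-hand square by Corollary \ref{mo67cor} (the paper leaves implicit the standard fact that geometric realization preserves weak homotopy equivalences, which you spell out). Your extra checks of pointedness and $1$-connectedness of $S^{n}_{M^{-1}\mathbb{Z}}$ are fine and consistent with Definition \ref{defrhtdef} and Theorem \ref{thebigdeal}(1).
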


\begin{proof} By Corollary \ref{mayof67cor}, the horizontal maps in the left-hand square are weak homotopy equivalences. By Theorem \ref{quilleneqsecretly}(2), the horizontal maps in the right-hand square are weak homotopy equivalences, as well.
\end{proof}

\subsection{Description of the left Bousfield localization}\label{section7.3} For a multiplicative subset $M$ of $\mathbb{Z}$, we describe the left Bousfield localization of the model category ${\mathcal{SS}\mathrm{ets}}_{1}$ at the set of maps \[B_{M^{-1}\mathbb{Z}}=\{E_{1}\operatorname{Sing}(S^{n}) \rightarrow E_{1}\operatorname{Sing}(S^{n}_{M^{-1}\mathbb{Z}}) \mid n \geq 2\}.\] In particular, we characterize the $B_{M^{-1}\mathbb{Z}}$-local objects and the $B_{M^{-1}\mathbb{Z}}$-local equivalences in ${\mathcal{SS}\mathrm{ets}}_{1}$. To state our characterizations, we produce analogs of the notions from Section \ref{section6.1} for $1$-reduced simplicial sets.

\begin{definition}\label{defrhtssetsdef} Let $M$ be a multiplicative subset of $\mathbb{Z}$.

\begin{enumerate}

\item A map of $1$-reduced simplicial sets $f$ is an \emph{$M$-local homotopy equivalence} if $|f|$ is an $M$-local homotopy equivalence of $1$-connected spaces.

\item A $1$-reduced simplicial set $K$ is \emph{$M$-local} if $|K|$ is an $M$-local space.

\item An \emph{$M$-localization} of a $1$-reduced simplicial set $X$ is an $M$-local homotopy equivalence with $M$-local target $f \colon X \rightarrow X_{M^{-1}\mathbb{Z}}$.

\end{enumerate}
When $M^{-1}\mathbb{Z}=\mathbb{Q}$, we speak of a \emph{rational homotopy equivalence}, a \emph{rational simplicial set}, and a \emph{rationalization}.
\end{definition}
The following proposition helps us transition from $1$-reduced simplicial sets to $1$-connected spaces.

\begin{proposition}[{\cite[Proposition 1.1.11]{Hirschhorn2003}}]\label{pshsaveme} If $A$ and $X$ are pointed simplicial sets and $X$ is a Kan complex, then there is a natural weak equivalence $\operatorname{Map}_{\mathcal{SS}\mathrm{ets}_{\ast}}(A, X) \rightarrow \operatorname{Map}_{\mathcal{T}\mathrm{op}_{\ast}}(|A|, |X|)$.
\end{proposition}
We show that a $1$-reduced simplicial set is $B_{M^{-1}\mathbb{Z}}$-local if and only if it is an $M$-local Kan complex.

\begin{proposition}\label{charthelocals} Let $M$ be a multiplicative subset of $\mathbb{Z}$. Let $X$ be a $1$-reduced Kan complex. Let $n \geq 2$. The following statements are equivalent.

\begin{enumerate}

\item The map $\operatorname{Map}_{\mathcal{SS}\mathrm{ets}_{\ast}}(E_{1}\operatorname{Sing}(S^{n}_{M^{-1}\mathbb{Z}}), X) \rightarrow \operatorname{Map}_{\mathcal{SS}\mathrm{ets}_{\ast}}(E_{1}\operatorname{Sing}(S^{n}), X)$ is a weak equivalence.

\item The map $\operatorname{Map}_{\mathcal{SS}\mathrm{ets}_{\ast}}(\operatorname{Sing}(S^{n}_{M^{-1}\mathbb{Z}}), X) \rightarrow \operatorname{Map}_{\mathcal{SS}\mathrm{ets}_{\ast}}(\operatorname{Sing}(S^{n}), X)$ is a weak equivalence.

\item The map $\operatorname{Map}_{\mathcal{T}\mathrm{op}_{\ast}}(|\operatorname{Sing}(S^{n}_{M^{-1}\mathbb{Z}})|, |X|) \rightarrow \operatorname{Map}_{\mathcal{T}\mathrm{op}_{\ast}}(|\operatorname{Sing}(S^{n})|, |X|)$ is a weak equivalence.

\item The map $\operatorname{Map}_{\mathcal{T}\mathrm{op}_{\ast}}(S^{n}_{M^{-1}\mathbb{Z}}, |X|) \rightarrow \operatorname{Map}_{\mathcal{T}\mathrm{op}_{\ast}}(S^{n}, |X|)$ is a weak equivalence.

\item The space $|X|$ is $M$-local.

\end{enumerate}

\end{proposition}

\begin{proof} First, (1) and (2) are equivalent by Proposition \ref{comdiagramheqs}. Then, (2) and (3) are equivalent by Proposition \ref{pshsaveme}. Now, (3) and (4) are equivalent by Proposition \ref{comdiagramheqs}. Lastly, (4) and (5) are equivalent by Proposition \ref{illdothisonept}.
\end{proof}
By a similar series of characterizations, we establish that a map of $1$-reduced simplicial sets is a $B_{M^{-1}\mathbb{Z}}$-local equivalence if and only if it is an $M$-local homotopy equivalence. To that end, we need an auxiliary lemma.

\begin{lemma}\label{omgwhatshappening} Let $M$ be a multiplicative subset of $\mathbb{Z}$. Let $f \colon X \rightarrow Y$ be a map of $1$-reduced simplicial sets. The following statements are equivalent.

\begin{enumerate}

\item For every $M$-local Kan complex $K$, the map $\operatorname{Map}_{\mathcal{T}\mathrm{op}_{\ast}}(|f|, |K|)$ is a weak equivalence.

\item For every $M$-local pointed space $Z$, the map $\operatorname{Map}_{\mathcal{T}\mathrm{op}_{\ast}}(|f|, Z)$ is a weak equivalence.

\end{enumerate}

\end{lemma}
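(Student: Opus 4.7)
The plan is to prove the two directions separately, using the singular complex-geometric realization functors to shuttle between pointed spaces and $1$-reduced simplicial sets.

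The direction (2)$\Rightarrow$(1) is immediate: given an $M$-local simplicial set $K$, its geometric realization $|K|$ is an $M$-local pointed space by the very definition of an $M$-local simplicial set (Definition \ref{defrhtssetsdef}(2)), so (2) applied with $Z = |K|$ yields (1).

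For the direction (1)$\Rightarrow$(2), the key idea is to associate to each $M$-local pointed space $Z$ an $M$-local $1$-reduced simplicial set whose geometric realization is weakly equivalent to $Z$. The natural candidate is $K = E_{1}\operatorname{Sing}(Z)$, which is $M$-local by Lemma \ref{usefulcorfyinfo}. The two natural maps
\[
|E_{1}\operatorname{Sing}(Z)| \longrightarrow |\operatorname{Sing}(Z)| \longrightarrow Z
\]
are weak homotopy equivalences by Corollary \ref{mo67cor} (note that $Z$ is $1$-connected, since $M$-locality is defined only for $1$-connected spaces) and Theorem \ref{quilleneqsecretly}(2), so their composite $h \colon |K| \to Z$ is a weak homotopy equivalence.

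To conclude, I would assemble the commutative square
\[\begin{tikzcd}[row sep=3mm, column sep=9mm, ampersand replacement=\&]
\operatorname{Map}_{\mathcal{T}\mathrm{op}_{\ast}}(|Y|, |K|) \arrow[r, "{|f|^{\ast}}"] \arrow[d, "h_{\ast}"'] \& \operatorname{Map}_{\mathcal{T}\mathrm{op}_{\ast}}(|X|, |K|) \arrow[d, "h_{\ast}"] \\
\operatorname{Map}_{\mathcal{T}\mathrm{op}_{\ast}}(|Y|, Z) \arrow[r, "{|f|^{\ast}}"'] \& \operatorname{Map}_{\mathcal{T}\mathrm{op}_{\ast}}(|X|, Z)
\end{tikzcd}\]
and argue that both vertical maps are weak homotopy equivalences, whence the bottom horizontal map is one by two-out-of-three, since the top horizontal map is one by the hypothesis (1) applied to the $M$-local simplicial set $K$. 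The vertical arrows are induced by post-composition with the weak homotopy equivalence $h$ between $1$-connected spaces, and the sources $|X|$ and $|Y|$ are CW complexes because $X$ and $Y$ are $1$-reduced simplicial sets. Hence each vertical arrow is a weak homotopy equivalence by the standard fact that post-composing with a weak homotopy equivalence of spaces induces a weak homotopy equivalence on function complexes out of a CW complex, which is the post-composition analog of Lemma \ref{citeadnauseam}(2) and follows from the fact that $\mathcal{T}\mathrm{op}$ is a simplicial model category in which CW complexes are cofibrant.

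The only subtlety is ensuring the vertical arrows are genuine weak equivalences, since $Z$ itself need not be a CW complex; this is precisely why I route through $|K|$ and $|\operatorname{Sing}(Z)|$, which are CW complexes, and invoke the cofibrancy of $|X|$ and $|Y|$ rather than any cofibrancy of the targets. Once this is in place, both directions of the lemma follow cleanly.
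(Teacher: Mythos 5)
Your proposal is correct and follows essentially the same route as the paper: the (2)$\Rightarrow$(1) direction via $Z=|K|$, and the (1)$\Rightarrow$(2) direction by routing through $K=E_{1}\operatorname{Sing}(Z)$ (Lemma \ref{usefulcorfyinfo}), the weak equivalences $|E_{1}\operatorname{Sing}(Z)|\rightarrow|\operatorname{Sing}(Z)|\rightarrow Z$ from Corollary \ref{mo67cor} and Theorem \ref{quilleneqsecretly}(2), and a two-out-of-three argument on the resulting commutative diagram of function complexes. Your explicit justification that post-composition with a weak equivalence induces a weak equivalence on mapping spaces out of the cofibrant CW complexes $|X|$ and $|Y|$ is exactly the step the paper uses implicitly, so nothing is missing.
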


\begin{proof} Suppose (1) holds. Let $Z$ be an $M$-local pointed space. By Corollary \ref{forlaterusetbacor}, $E_{1}\operatorname{Sing}(Z)$ is a Kan complex. Corollary \ref{mayof67cor} and Theorem \ref{quilleneqsecretly}(2) provide the weak homotopy equivalences $|E_{1}\operatorname{Sing}(Z)| \rightarrow |\operatorname{Sing}(Z)| \rightarrow Z$, so $E_{1}\operatorname{Sing}(Z)$ is an $M$-local Kan complex. Now, there is a commutative diagram

\[\begin{tikzcd}[row sep=3mm, column sep=3mm, ampersand replacement=\&]
	{\operatorname{Map}_{\mathcal{T}\mathrm{op}_{\ast}}(|Y|, |E_{1}\operatorname{Sing}(Z)|)} \&\& {\operatorname{Map}_{\mathcal{T}\mathrm{op}_{\ast}}(|Y|, |\operatorname{Sing}(Z)|)} \&\& {\operatorname{Map}_{\mathcal{T}\mathrm{op}_{\ast}}(|Y|, Z)} \\
	\\
	{\operatorname{Map}_{\mathcal{T}\mathrm{op}_{\ast}}(|X|, |E_{1}\operatorname{Sing}(Z)|)} \&\& {\operatorname{Map}_{\mathcal{T}\mathrm{op}_{\ast}}(|X|, |\operatorname{Sing}(Z)|)} \&\& {\operatorname{Map}_{\mathcal{T}\mathrm{op}_{\ast}}(|X|, Z)}
	\arrow[from=1-1, to=1-3]
	\arrow[from=1-1, to=3-1]
	\arrow[from=1-3, to=1-5]
	\arrow[from=1-3, to=3-3]
	\arrow[from=1-5, to=3-5]
	\arrow[from=3-1, to=3-3]
	\arrow[from=3-3, to=3-5]
\end{tikzcd}\]
whose horizontal maps are weak equivalences by Corollary \ref{mayof67cor} and Theorem \ref{quilleneqsecretly}(2) and whose left-hand vertical map is a weak equivalence by (1); thus, so is the right-hand vertical map, proving (2). Conversely, suppose (2) holds. Let $K$ be an $M$-local simplicial set. Then, $|K|$ is an $M$-local pointed space, which implies (1).
\end{proof}
Using our lemma, we show that a $B_{M^{-1}\mathbb{Z}}$-local equivalence is precisely an $M$-local homotopy equivalence.

\begin{proposition}\label{chartheloceqs} Let $M$ be a multiplicative subset of $\mathbb{Z}$. Let $f$ be a map of $1$-reduced simplicial sets. The following statements are equivalent.

\begin{enumerate}

\item For every $B_{M^{-1}\mathbb{Z}}$-local simplicial set $K$, the map $\operatorname{Map}_{\mathcal{SS}\mathrm{ets}_{\ast}}(f, K)$ is a weak equivalence.

\item For every $M$-local Kan complex $K$, the map $\operatorname{Map}_{\mathcal{SS}\mathrm{ets}_{\ast}}(f, K)$ is a weak equivalence.

\item For every $M$-local Kan complex $K$, the map $\operatorname{Map}_{\mathcal{T}\mathrm{op}_{\ast}}(|f|, |K|)$ is a weak equivalence.

\item For every $M$-local pointed space $Z$, the map $\operatorname{Map}_{\mathcal{T}\mathrm{op}_{\ast}}(|f|, Z)$ is a weak equivalence.

\item The map $|f|$ is an $M$-local homotopy equivalence.

\end{enumerate}

\end{proposition}

\begin{proof} First, (1) and (2) are equivalent by Proposition \ref{charthelocals}. Then, (2) and (3) are equivalent by Proposition \ref{pshsaveme}. Now, (3) and (4) are equivalent by Lemma \ref{omgwhatshappening}. Lastly, (4) and (5) are equivalent by Proposition \ref{illdothisonetoopt}.
\end{proof}
Propositions \ref{charthelocals} and \ref{chartheloceqs} together describe the left Bousfield localization of ${\mathcal{SS}\mathrm{ets}}_{1}$ at the set $B_{M^{-1}\mathbb{Z}}$.

\begin{theorem}\label{ididitididit} Let $M$ be a multiplicative subset of $\mathbb{Z}$. There is a model structure $\mathcal{L}_{M^{-1}\mathbb{Z}}{\mathcal{SS}\mathrm{ets}}_{1}$ on the category of $1$-reduced simplicial sets in which a map $f$ is

\begin{enumerate}

\item a weak equivalence if and only if $f$ is an $M$-local homotopy equivalence; and

\item a cofibration if and only if $f$ is a monomorphism.

\end{enumerate}
A $1$-reduced simplicial set $X$ is fibrant in $\mathcal{L}_{M^{-1}\mathbb{Z}}{\mathcal{SS}\mathrm{ets}}_{1}$ if and only if $X$ is an $M$-local Kan complex. The model category $\mathcal{L}_{M^{-1}\mathbb{Z}}{\mathcal{SS}\mathrm{ets}}_{1}$ is combinatorial, left proper, and simplicial with the structure from Theorem \ref{simplicialdefsworks}.
\end{theorem}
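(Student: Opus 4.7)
The plan is to apply the existence theorem for left Bousfield localizations, namely Theorem \ref{lblexists}, to the simplicial model category ${\mathcal{SS}\mathrm{ets}}_{1}$ of Theorem \ref{simplicialdefsworks} at the set
\[
T_{M^{-1}\mathbb{Z}}=\left\{E_{1}\operatorname{Sing}(S^{n}) \rightarrow E_{1}\operatorname{Sing}(S^{n}_{M^{-1}\mathbb{Z}}) \mid n \geq 2\right\}
\]
constructed in Section \ref{section7.2}. The first step is to verify the hypotheses of Theorem \ref{lblexists}: the model category ${\mathcal{SS}\mathrm{ets}}_{1}$ is combinatorial and left proper by Theorem \ref{rightinduceeyebrow}, and it carries a simplicial structure by Theorem \ref{simplicialdefsworks}. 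Since $T_{M^{-1}\mathbb{Z}}$ is indexed by the positive integers, it is a set, so Theorem \ref{lblexists} produces the left Bousfield localization $\mathcal{L}_{T_{M^{-1}\mathbb{Z}}}{\mathcal{SS}\mathrm{ets}}_{1}$, which we define to be $\mathcal{L}_{M^{-1}\mathbb{Z}}{\mathcal{SS}\mathrm{ets}}_{1}$.

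The second step is to read off the descriptions of cofibrations, weak equivalences, and fibrant objects from the conclusions of Theorem \ref{lblexists}. The cofibrations are precisely the cofibrations of ${\mathcal{SS}\mathrm{ets}}_{1}$, namely the monomorphisms, by Theorem \ref{rightinduceeyebrow}, which gives (2). The weak equivalences are the $T_{M^{-1}\mathbb{Z}}$-local equivalences; Proposition \ref{chartheloceqs} identifies these with the $M$-local homotopy equivalences, which gives (1). The fibrant objects are the $T_{M^{-1}\mathbb{Z}}$-local objects; Proposition \ref{charthelocals} identifies these with the $M$-local $1$-reduced simplicial sets, which gives the fibrancy characterization.

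Finally, the combinatoriality, left properness, and simplicial structure of $\mathcal{L}_{M^{-1}\mathbb{Z}}{\mathcal{SS}\mathrm{ets}}_{1}$ are all guaranteed as the concluding claims of Theorem \ref{lblexists}, with the simplicial structure inherited from that of ${\mathcal{SS}\mathrm{ets}}_{1}$ fixed in Theorem \ref{simplicialdefsworks}. There is essentially no obstacle left in the proof itself: all of the genuine work has been carried out beforehand, in establishing the left transfer in Section \ref{section3}, the simplicial structure in Section \ref{section5}, and, most crucially, the two characterization results of Propositions \ref{charthelocals} and \ref{chartheloceqs}, which are what make the abstract output of Barwick's theorem match Quillen's classical description of the family of model structures.
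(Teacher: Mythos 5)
Your proposal is correct and follows exactly the paper's route: apply Theorem \ref{lblexists} to the combinatorial, left proper, simplicial model category ${\mathcal{SS}\mathrm{ets}}_{1}$ at the set $T_{M^{-1}\mathbb{Z}}$, then translate the abstract conclusions via Propositions \ref{charthelocals} and \ref{chartheloceqs}. Nothing is missing; the characterization results are indeed where all the substantive work lies, just as in the paper.
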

Theorem \ref{ididitididit} recovers the family of model structures on $1$-reduced simplicial sets in \cite[Part II, Theorem 2.2]{Quillen1969}, as well as the characterization of fibrant objects in \cite[Part II, Corollary 2.6]{Quillen1969}. The most prominent model structure in this family is that corresponding to the case $M^{-1}\mathbb{Z}=\mathbb{Q}$, which we present separately.

\begin{corollary}\label{ididitididitq} There is a model structure $\mathcal{L}_{\mathbb{Q}}{\mathcal{SS}\mathrm{ets}}_{1}$ on the category of $1$-reduced simplicial sets in which

\begin{enumerate}

\item a map is a weak equivalence if and only if it is a rational homotopy equivalence; and

\item a $1$-reduced simplicial set is fibrant if and only if it is a rational Kan complex.

\end{enumerate}

\end{corollary}

Lastly, note that our approach establishes some additional model-categorical features of the model categories in Quillen's family; namely, we know their simplicial structure, as well as a generating set of cofibrations.

\section{An approach using localization with respect to homology}\label{section8}
We give an alternative description of the family of model structures from Theorem \ref{ididitididit}. This description uses Bousfield localization with respect to homology, as developed by Bousfield in \cite{Bousfield1975}, together with Theorem \ref{leftinducetheoremwit} on left-induced model structures. The idea is that, instead of first left-transferring the model structure on ${\mathcal{SS}\mathrm{ets}}_{\ast}$ and then localizing, we can first localize the model structure on ${\mathcal{SS}\mathrm{ets}}_{\ast}$ and then do a left transfer. We localize ${\mathcal{SS}\mathrm{ets}}_{\ast}$ with respect to homology. Because homotopy and homology are in agreement for $1$-connected spaces, the left transfer to ${\mathcal{SS}\mathrm{ets}}_{1}$ yields the family of model structures from Theorem \ref{ididitididit}.

\subsection{Bousfield localization with respect to homology}\label{section8.1} Firstly, we introduce the following terminology.

\begin{definition}\label{defhlgyisodef} Let $M$ be a multiplicative subset of $\mathbb{Z}$. A map of topological spaces $f \colon X \rightarrow Y$ is an \emph{$M$-local homology isomorphism} if, for every $n \geq 0$, the map $H_{n}(f; M^{-1}\mathbb{Z}) \colon H_{n}(X; M^{-1}\mathbb{Z}) \rightarrow H_{n}(Y; M^{-1}\mathbb{Z})$ is an isomorphism. A simplicial map $g$ is an \emph{$M$-local homology isomorphism} if $|g|$ is an $M$-local homology isomorphism of topological spaces. When $M^{-1}\mathbb{Z}=\mathbb{Q}$, we speak of a \emph{rational homology isomorphism}.
\end{definition}

\begin{remark}\label{defhlgyisodef1} As the abelian group $M^{-1}\mathbb{Z}$ is flat \cite[Theorem 4.80]{Rotman2009}, the universal coefficient theorem for homology yields the natural isomorphism $H_{n}(X; M^{-1}\mathbb{Z}) \cong H_{n}(X; \mathbb{Z}) \otimes M^{-1}\mathbb{Z}$. Thus, $M$-local homology discards torsion from integral homology just as $M$-local homotopy discards torsion from the higher homotopy groups.
\end{remark}

The following example is invoked later in this paper.

\begin{example}\label{defhlgyisodef2} By Remark \ref{defhlgyisodef1}, the unique map from $\mathbb{R}P^{2}$ to the point is a rational homology isomorphism.
\end{example}
The natural isomorphism from Remark \ref{defhlgyisodef1} also has the following consequence that we note for later use.

\begin{lemma}\label{forlateruselemma} Let $M$ be a multiplicative subset of $\mathbb{Z}$. Every integral homology isomorphism is an $M$-local homology isomorphism.
\end{lemma}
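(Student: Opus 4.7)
The plan is to reduce the statement to a tensor-product argument using the universal coefficient theorem, which has already been invoked in Remark \ref{defhlgyisodef1}. Note that the statement concerns maps of topological spaces (and via geometric realization also simplicial maps), so it suffices to handle the topological case: once established there, the simplicial case follows immediately by applying $|-|$ and using the definition of an $M$-local homology isomorphism of simplicial maps.

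So let $f \colon X \rightarrow Y$ be an integral homology isomorphism of topological spaces, and fix $n \geq 0$. First I would invoke the natural isomorphism recorded in Remark \ref{defhlgyisodef1},
\[
H_{n}(-;\,M^{-1}\mathbb{Z}) \cong H_{n}(-;\,\mathbb{Z}) \underset{\mathbb{Z}}{\otimes} M^{-1}\mathbb{Z},
\]
whose validity rests on the flatness of $M^{-1}\mathbb{Z}$ as a $\mathbb{Z}$-module. Under this natural identification, the map $f_{\ast} \colon H_{n}(X; M^{-1}\mathbb{Z}) \rightarrow H_{n}(Y; M^{-1}\mathbb{Z})$ corresponds to
\[
f_{\ast} \underset{\mathbb{Z}}{\otimes} \operatorname{id}_{M^{-1}\mathbb{Z}} \colon H_{n}(X; \mathbb{Z}) \underset{\mathbb{Z}}{\otimes} M^{-1}\mathbb{Z} \rightarrow H_{n}(Y; \mathbb{Z}) \underset{\mathbb{Z}}{\otimes} M^{-1}\mathbb{Z}.
\]
Since $f_{\ast} \colon H_{n}(X; \mathbb{Z}) \rightarrow H_{n}(Y; \mathbb{Z})$ is an isomorphism by hypothesis and the functor $- \otimes_{\mathbb{Z}} M^{-1}\mathbb{Z}$ sends isomorphisms to isomorphisms, the tensored map is an isomorphism; hence so is $f_{\ast}$ at the level of $M$-local homology.

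There is no real obstacle here: the entire argument is a bookkeeping exercise built on top of the universal coefficient isomorphism and the fact that localization is an exact (indeed flat) operation on abelian groups. The only subtlety worth flagging is the naturality of the universal coefficient isomorphism in the space variable, which is what permits us to identify $f_{\ast}$ with $f_{\ast} \otimes \operatorname{id}_{M^{-1}\mathbb{Z}}$; this is standard and need not be reproved.
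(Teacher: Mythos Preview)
Your proof is correct and matches the paper's approach exactly: the paper does not give a separate proof but simply notes that the lemma is a consequence of the natural isomorphism in Remark \ref{defhlgyisodef1}, which is precisely the reduction you carry out. Your additional remarks about naturality and the simplicial case are accurate and do no harm.
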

Also for later use, we need to recall the following theorem that relates integral homology and homotopy.

\begin{theorem}[{\cite[Corollary 6.69(1)]{DavisKirk2001}}]\label{dkcitediguess} A weak homotopy equivalence is an integral homology isomorphism.
\end{theorem}
We also need to recall another theorem of a similar nature for future reference; this theorem informs us that, under the hypothesis of $1$-connectedness, local homology agrees with local homotopy.

\begin{theorem}[{\cite[Theorem 8.6]{FHT2001}}]\label{whiteheadserrepair} Let $M$ be a multiplicative subset of $\mathbb{Z}$. A map of $1$-connected spaces $f$ is an $M$-local homology isomorphism if and only if $f$ is an $M$-local homotopy equivalence.
\end{theorem}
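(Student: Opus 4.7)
The plan is to reduce both conditions on $f \colon X \to Y$ to the same condition on the $M$-localized map $f_{M^{-1}\mathbb{Z}} \colon X_{M^{-1}\mathbb{Z}} \to Y_{M^{-1}\mathbb{Z}}$, and then apply the classical Whitehead theorem on the latter. Concretely, I would form the commutative square with vertical maps the $M$-localizations guaranteed by Theorem~\ref{thebigdeal}, and establish two auxiliary facts: first, that the $M$-localization map $Z \to Z_{M^{-1}\mathbb{Z}}$ of any $1$-connected space $Z$ is an $M$-local homology isomorphism; second, that an $M$-local $1$-connected CW complex has integral homology groups that are already $M^{-1}\mathbb{Z}$-modules, so that $M$-local homology isomorphisms between such spaces are automatically integral homology isomorphisms.

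With these in hand, the two implications become formal. For the forward direction, if $f$ is an $M$-local homotopy equivalence then $f_{M^{-1}\mathbb{Z}}$ is a weak equivalence of $M$-local $1$-connected CW complexes (its effect on higher homotopy groups is $f_\ast \otimes \operatorname{id}_{M^{-1}\mathbb{Z}}$, which is an isomorphism by assumption), hence an integral homology isomorphism by Theorem~\ref{dkcited}, hence an $M$-local homology isomorphism by Lemma~\ref{forlateruselemma}; combined with the fact that each vertical localization map is an $M$-local homology isomorphism, the two-out-of-three property for isomorphisms forces $f$ itself to be an $M$-local homology isomorphism. For the reverse direction, if $f$ is an $M$-local homology isomorphism then so is $f_{M^{-1}\mathbb{Z}}$ (again by two-out-of-three against the vertical localization maps); by the second auxiliary fact this is actually an integral homology isomorphism, and the classical Whitehead theorem for $1$-connected CW complexes promotes it to a weak equivalence. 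Since the vertical maps are $M$-local homotopy equivalences by construction, $f$ is as well.

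The main obstacle is the first auxiliary fact, namely that $Z \to Z_{M^{-1}\mathbb{Z}}$ is an $M$-local homology isomorphism, together with the related claim that the homology of an $M$-local $1$-connected space is $M^{-1}\mathbb{Z}$-modular. Both are handled by a Serre class argument applied to the class of abelian groups on which $- \otimes M^{-1}\mathbb{Z}$ is the zero map (equivalently, whose $M^{-1}\mathbb{Z}$-localization vanishes): this class is a Serre class closed under extensions, and the relative Serre mod-$\mathcal{C}$ Hurewicz theorem, applied inductively up the Postnikov tower or the relative CW structure of $(Z_{M^{-1}\mathbb{Z}}, Z)$ provided by Theorem~\ref{thebigdeal}(1), passes between homology and homotopy up to this class. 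The $1$-connectedness hypothesis is used precisely here to make the Hurewicz step go through. Once this technical input is secured, the proof is a diagram chase in the square above.
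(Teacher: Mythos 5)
There is nothing to compare against inside the paper: Theorem \ref{whiteheadserrepair} is imported verbatim from \cite[Theorem 8.6]{FHT2001}, where it is proved directly, by converting $f$ into a fibration and running a relative Hurewicz/Serre spectral sequence comparison mod the Serre class $\mathcal{C}_M=\{A : A\otimes_{\mathbb{Z}}M^{-1}\mathbb{Z}=0\}$ of $M$-torsion groups, \emph{before} cellular localizations are constructed. Your reduction runs in the opposite logical direction: you lean on Theorem \ref{thebigdeal} (FHT's Theorem 9.7), but the standard proof of that result --- in particular the identification $\pi_*(Y_{M^{-1}\mathbb{Z}})\cong\pi_*(Y)\otimes M^{-1}\mathbb{Z}$ and the universal property --- is itself established using the Whitehead--Serre theorem you are trying to prove, so as a reconstruction of the cited proof your argument is circular unless you supply an independent construction of localizations. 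There is also a question of economy: the tool you invoke for auxiliary fact (a), the relative mod-$\mathcal{C}_M$ Hurewicz/Whitehead theorem, already gives the entire statement when applied directly to $f$, since by flatness of $M^{-1}\mathbb{Z}$ ``$\mathcal{C}_M$-isomorphism on homotopy'' and ``$\mathcal{C}_M$-isomorphism on integral homology'' are literally ``$M$-local homotopy equivalence'' and ``$M$-local homology isomorphism''. So the detour through $f_{M^{-1}\mathbb{Z}}$ consumes a stronger input (existence of localizations) while the machinery you grant yourself already finishes the proof.

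The concrete gap is your auxiliary fact (b) and its claimed justification. A Serre-class argument with $\mathcal{C}_M$ cannot prove that an $M$-local $1$-connected space has integral homology consisting of $M^{-1}\mathbb{Z}$-modules: $\mathcal{C}_M$-equivalence cannot distinguish an abelian group from its $M$-localization (the inclusion $\mathbb{Z}\subset\mathbb{Z}_{(p)}$ is a $\mathcal{C}_M$-isomorphism), and the class of $M^{-1}\mathbb{Z}$-modules is not a Serre class (it is not closed under subgroups or arbitrary quotients), so ``the same Serre class argument'' does not apply to it. Fact (b) is true, but it needs the separate, genuinely different input that $K(A,n)$ has $M^{-1}\mathbb{Z}$-module reduced integral homology (equivalently, vanishing $\mathbb{Z}/m$-homology for $m\in M$) whenever $A$ is an $M^{-1}\mathbb{Z}$-module, propagated up a Postnikov tower via the Serre spectral sequence, using that homomorphisms between $M^{-1}\mathbb{Z}$-modules are automatically $M^{-1}\mathbb{Z}$-linear so that spectral-sequence subquotients stay in the class. (Alternatively (b) can be avoided: apply mod-$\mathcal{C}_M$ Whitehead to $f_{M^{-1}\mathbb{Z}}$ and note that a $\mathcal{C}_M$-isomorphism between $M^{-1}\mathbb{Z}$-module homotopy groups has $M$-torsion kernel and cokernel which are themselves $M^{-1}\mathbb{Z}$-modules, hence zero --- but that is again the theorem itself applied to another map.) The two-out-of-three diagram chase in your square, and the identification of $\pi_*(f_{M^{-1}\mathbb{Z}})$ with $f_*\otimes\operatorname{id}$, are fine; the problems are the provenance of Theorem \ref{thebigdeal} and the misattributed proof of (b).
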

For a multiplicative subset $M$ of $\mathbb{Z}$, Bousfield localization with respect to homology endows the category $\mathcal{SS}\mathrm{ets}$ of simplicial sets with a model structure whose weak equivalences are the $M$-local homology isomorphisms. We use the pointed version of this result, which holds by the same proof.

\begin{theorem}[{\cite[Theorem 10.2]{Bousfield1975}}]\label{ptedhomloc} Let $M$ be a multiplicative subset of $\mathbb{Z}$. There exists a model structure $\mathcal{L}_{HM^{-1}\mathbb{Z}}\mathcal{SS}\mathrm{ets}_{\ast}$ on the category of pointed simplicial sets in which a map $f$ is

\begin{enumerate}

\item a weak equivalence if and only if $f$ is an $M$-local homology isomorphism; and

\item a cofibration if and only if $f$ is a monomorphism.

\end{enumerate}
This model category is combinatorial, left proper, and simplicial with the structure from Example \ref{defsmcdef2}.
\end{theorem}

\begin{remark}\label{ptedhomlocrmk} By \cite[\S 2.5]{Bousfield1997}, $\mathcal{L}_{HM^{-1}\mathbb{Z}}\mathcal{SS}\mathrm{ets}_{\ast}$ is the left Bousfield localization of $\mathcal{SS}\mathrm{ets}_{\ast}$ at a single map, which allows us to infer that $\mathcal{L}_{HM^{-1}\mathbb{Z}}\mathcal{SS}\mathrm{ets}_{\ast}$ inherits the properties of being combinatorial, left proper, and simplicial.
\end{remark}
In particular, we have a model structure whose weak equivalences are the rational homology isomorphisms.

\subsection{An alternative description of the family of model structures on $1$-reduced simplicial sets}\label{section8.2}
For a multiplicative subset $M$ of $\mathbb{Z}$, we show that the model category $\mathcal{L}_{M^{-1}\mathbb{Z}}{\mathcal{SS}\mathrm{ets}}_{1}$ from Theorem \ref{ididitididit} is the left transfer of the model category $\mathcal{L}_{HM^{-1}\mathbb{Z}}\mathcal{SS}\mathrm{ets}_{\ast}$ from Theorem \ref{ptedhomloc} along the adjunction \[F \colon {\mathcal{SS}\mathrm{ets}}_{1} \rightleftarrows \mathcal{SS}\mathrm{ets}_{\ast} \colon E_{1}\] from Proposition \ref{yesquite}(2). Again, we use Theorem \ref{leftinducetheoremwit}, so we verify the homotopical condition in its hypothesis.

\begin{lemma}\label{verifybhkkrs2} Let $M$ be a multiplicative subset of $\mathbb{Z}$. Let $f$ be a map of $1$-reduced simplicial sets. If $f$ has the right lifting property with respect to every monomorphism of $1$-reduced simplicial sets, then $f$ is an $M$-local homology isomorphism.
\end{lemma}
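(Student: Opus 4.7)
My plan is to observe that the hypothesis here is literally the hypothesis of Lemma \ref{verifybhkkrs}, so I immediately reuse that lemma to extract strong information about $f$, and then transport that information from the homotopical world to the $M$-local homological world via two off-the-shelf citations already present in the excerpt.

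More precisely, the first step is to apply Lemma \ref{verifybhkkrs} verbatim: since $f \colon X \to Y$ is assumed to have the right lifting property against every monomorphism of $1$-reduced simplicial sets, that lemma tells us directly that $f$ is a weak homotopy equivalence. (Recall the argument there: every boundary inclusion $i_n$ pushes forward under $R_1$ to a monomorphism in ${\mathcal{SS}\mathrm{ets}}_1$ by Lemma \ref{notallislost}, and then the adjunction $R_1 \dashv U$ of Lemma \ref{notquite} transfers lifting against $R_1(i_n)$ to lifting against $i_n$, exhibiting $f$ as an acyclic fibration in ${\mathcal{SS}\mathrm{ets}}$.) So I get the intermediate conclusion ``$f$ is a weak homotopy equivalence'' for free.

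The second step is to upgrade ``weak homotopy equivalence'' to ``integral homology isomorphism.'' This is exactly the content of Theorem \ref{dkcited}, which is already recorded in this section. The third step is to downgrade from $\mathbb{Z}$-coefficients to $M^{-1}\mathbb{Z}$-coefficients, which is precisely the content of Lemma \ref{forlateruselemma} (ultimately a consequence of the flatness of $M^{-1}\mathbb{Z}$ over $\mathbb{Z}$ noted in Remark \ref{defhlgyisodef1}). Stringing these together yields that $f$ is an $M$-local homology isomorphism.

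The ``main obstacle'' is really nil: the argument is a three-line chain, and the only place where genuine work happens is the left-transfer trick encapsulated inside Lemma \ref{verifybhkkrs}, which has already been done. The one stylistic choice to make is whether to invoke Lemma \ref{verifybhkkrs} as a black box or to reproduce its two-line proof inline; I will opt for the black-box form to keep the statement crisply positioned as the ``$M$-local analog'' of Lemma \ref{verifybhkkrs} and to make the parallel between the two left-inductions in this paper transparent.
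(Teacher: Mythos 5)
Your proposal is correct and follows exactly the paper's own proof: invoke Lemma \ref{verifybhkkrs} to get that $f$ is a weak homotopy equivalence, then Theorem \ref{dkcited} to pass to an integral homology isomorphism, then Lemma \ref{forlateruselemma} to conclude $f$ is an $M$-local homology isomorphism. No gaps and no divergence from the paper's argument.
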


\begin{proof} Suppose $f$ has the right lifting property with respect to every monomorphism of $1$-reduced simplicial sets. By Lemma \ref{verifybhkkrs}, we know that $|f|$ is a weak homotopy equivalence. Then, Theorem \ref{dkcitediguess} informs us that $|f|$ is an integral homology isomorphism. Therefore, by Lemma \ref{forlateruselemma}, $|f|$ is an $M$-local homology isomorphism.
\end{proof}
We use Theorem \ref{leftinducetheoremwit} to give an alternative approach to the family of model structures from Theorem \ref{ididitididit}.

\begin{theorem}\label{rightinduceeyebrow2} Let $M$ be a multiplicative subset of $\mathbb{Z}$. The model category $\mathcal{L}_{M^{-1}\mathbb{Z}}{\mathcal{SS}\mathrm{ets}}_{1}$ is the left transfer of the model category $\mathcal{L}_{HM^{-1}\mathbb{Z}}\mathcal{SS}\mathrm{ets}_{\ast}$ along the adjunction $F \colon {\mathcal{SS}\mathrm{ets}}_{1} \rightleftarrows {\mathcal{SS}\mathrm{ets}}_{\ast} \colon E_{1}$.
\end{theorem}

\begin{proof} Theorem \ref{leftinducetheoremwit} says that we can left transfer the model structure $\mathcal{L}_{HM^{-1}\mathbb{Z}}\mathcal{SS}\mathrm{ets}_{\ast}$ along the adjunction to a model structure on ${\mathcal{SS}\mathrm{ets}}_{1}$ whose weak equivalences are the $M$-local homology isomorphisms and whose cofibrations are the monomorphisms. By Theorem \ref{whiteheadserrepair}, a map of $1$-reduced simplicial sets is an $M$-local homology isomorphism if and only if it is an $M$-local homotopy equivalence, so the model structure is $\mathcal{L}_{M^{-1}\mathbb{Z}}{\mathcal{SS}\mathrm{ets}}_{1}$.
\end{proof}

In particular, we obtain another modern description of the model category $\mathcal{L}_{\mathbb{Q}}{\mathcal{SS}\mathrm{ets}}_{1}$ from Corollary \ref{ididitididitq}.

\section{Rational homotopy theory of non-simply connected spaces}\label{section9}
Let $M$ be a multiplicative subset of $\mathbb{Z}$. We study some elements of the rational homotopy theory of non-simply connected spaces in order to generalize Quillen's family of model structures beyond $1$-reduced simplicial sets. In particular, we retain our description of $M$-local spaces in terms of function complexes beyond the $1$-connected setting, which allows us to describe our left Bousfield localization of all spaces in Section \ref{section10}.

\subsection{Rationalization of spaces}\label{section9.1}
Firstly, Definition \ref{defrhtdef} generalizes to all spaces as follows.

\begin{definition}[{\cite[\S 6]{GTHT2000}}]\label{defrhtnscdef} Let $M$ be a multiplicative subset of $\mathbb{Z}$.

\begin{enumerate}

\item A map $f \colon X \rightarrow Y$ is an \emph{$M$-local homotopy equivalence} if $f_{\ast} \colon \pi_{n}(X) \rightarrow \pi_{n}(Y)$ is an isomorphism for every $n \leq 1$ and $f_{\ast} \otimes \operatorname{id}_{M^{-1}\mathbb{Z}} \colon \pi_{n}(X) \otimes M^{-1}\mathbb{Z} \rightarrow \pi_{n}(Y) \otimes M^{-1}\mathbb{Z}$ is an isomorphism for every $n \geq 2$.

\item A space $Y$ is \emph{$M$-local} if, for every $n \geq 2$, the abelian group $\pi_{n}(Y)$ is an $M^{-1}\mathbb{Z}$-module.

\item An \emph{$M$-localization} of a space $Y$ is an $M$-local homotopy equivalence with $M$-local target $f \colon Y \rightarrow Y_{M^{-1}\mathbb{Z}}$.

\end{enumerate}
When $M^{-1}\mathbb{Z}=\mathbb{Q}$, we speak of a \emph{rational homotopy equivalence}, a \emph{rational space}, and a \emph{rationalization}.

\end{definition}
The absence of any nilpotency hypothesis deprives us of any sensible notion of tensoring the fundamental group with $\mathbb{Q}$, so we ask that a rational homotopy equivalence induce an isomorphism of fundamental groups.

\begin{example}\label{defrhtnscdef1} Every weak homotopy equivalence is an $M$-local homotopy equivalence. Also, every $M$-local homotopy equivalence of $M$-local spaces is a weak homotopy equivalence.
\end{example}

The following example ought to be contrasted with Example \ref{defhlgyisodef2}.

\begin{example}\label{defrhtnscdef22} The unique map from $\mathbb{R}P^{2}$ to the point is not a rational homotopy equivalence; the nontriviality of $\pi_{1}(\mathbb{R}P^{2}) \cong \mathbb{Z}/2\mathbb{Z}$ precludes it from inducing an isomorphism of fundamental groups.
\end{example}

\begin{example}\label{defrhtnscdef2} The spaces $S^{1}$, $S^{1} \vee S^{1}$, and $\mathbb{R}P^{\infty}$ are $M$-local because their higher homotopy groups vanish.
\end{example}
Example \ref{defrhtnscdef2} begs the question of why our definition of an $M$-local space imposes no condition on the fundamental group. In particular, we can ask that, for every element $m$ of $M$, the $m$-th power map $g \mapsto g^{m}$ be a self-bijection of the fundamental group, even though it need not be a homomorphism for non-abelian groups. The problem is that, with this stricter definition, there are spaces that do not admit any $M$-localization; this result is proven by Casacuberta in \cite[Proposition 2.1]{Casacuberta1993}. In our generalization of rational homotopy theory, spaces can be rationalized, albeit not with the universal property of the rationalization of $1$-connected spaces.

\begin{theorem}[{\cite[\S 6]{GTHT2000}}]\label{thebigdealsad} Let $M$ be a multiplicative subset of $\mathbb{Z}$. Every space admits an $M$-localization.
\end{theorem}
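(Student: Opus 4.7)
The plan is to follow the approach of Gómez-Tato, Halperin, and Tanré from \cite{GTHT2000}, which was outlined in the introduction of this paper: reduce to the connected case, and then use fiberwise $M$-localization applied to the first Postnikov stage of a connected space. This reduces the existence of $M$-localization for an arbitrary space to the existence of $M$-localization for $1$-connected spaces, which is Theorem \ref{thebigdeal}.

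First, I would reduce to connected spaces. A space $Y$ is the disjoint union of its path components $Y_\alpha$, and since homotopy groups and the property of being $M$-local are defined componentwise with respect to basepoints, it is enough to $M$-localize each $Y_\alpha$ and take the disjoint union of the resulting localization maps.

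Next, assume $Y$ is connected, and consider the fibration sequence
\[
\widetilde{Y} \longrightarrow Y \longrightarrow K(\pi_{1}(Y), 1)
\]
arising from the first Postnikov approximation of $Y$. The fiber $\widetilde{Y}$ is the universal cover of $Y$, so it is $1$-connected, and by Theorem \ref{thebigdeal} we have an $M$-localization $\widetilde{Y} \to \widetilde{Y}_{M^{-1}\mathbb{Z}}$ in the classical sense. I would then invoke the fiberwise localization machinery of Sullivan, Llerena, and Farjoun (as referenced in Section \ref{section1}) to produce a commutative diagram
\[\begin{tikzcd}[row sep=4mm, column sep=6mm, ampersand replacement=\&]
\widetilde{Y} \& Y \& K(\pi_{1}(Y), 1) \\
\widetilde{Y}_{M^{-1}\mathbb{Z}} \& Y_{M^{-1}\mathbb{Z}} \& K(\pi_{1}(Y), 1)
\arrow[from=1-1, to=1-2]
\arrow[from=1-2, to=1-3]
\arrow[from=1-1, to=2-1]
\arrow["\ell", from=1-2, to=2-2]
\arrow[equal, from=1-3, to=2-3]
\arrow[from=2-1, to=2-2]
\arrow[from=2-2, to=2-3]
\end{tikzcd}\]
in which the bottom row is again a fibration sequence, the left-hand vertical map is the $M$-localization of $\widetilde{Y}$, and the right-hand vertical map is the identity on the base. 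I would take this middle map $\ell \colon Y \to Y_{M^{-1}\mathbb{Z}}$ as the candidate $M$-localization.

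Finally, I would verify that $\ell$ is an $M$-local homotopy equivalence with $M$-local target. Comparing the long exact sequences of homotopy groups for the two fibration rows gives, for $n \geq 2$, that $\pi_n(\widetilde{Y}_{M^{-1}\mathbb{Z}}) \xrightarrow{\cong} \pi_n(Y_{M^{-1}\mathbb{Z}})$ since $K(\pi_1(Y),1)$ has no higher homotopy, and similarly $\pi_n(\widetilde{Y}) \xrightarrow{\cong} \pi_n(Y)$; combined with the fact that the left vertical map is a $1$-connected $M$-localization, this shows $\pi_n(Y_{M^{-1}\mathbb{Z}})$ is an $M^{-1}\mathbb{Z}$-module and that $\ell_{\ast} \otimes \operatorname{id}_{M^{-1}\mathbb{Z}}$ is an isomorphism on $\pi_n$ for $n \geq 2$. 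The five-lemma applied at the bottom of the exact sequence, together with the identity on the base $K(\pi_1(Y),1)$, shows that $\pi_1(\ell)$ is an isomorphism and that $\ell$ preserves path-connectedness; this matches the conditions of Definition \ref{defrhtnscdef}.

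The main obstacle is the invocation of fiberwise $M$-localization in enough generality to apply to the Postnikov fibration $\widetilde{Y} \to Y \to K(\pi_1(Y),1)$ with an arbitrary (possibly non-abelian) fundamental group acting on the fiber. This is exactly the content of the results cited (Sullivan \cite{Sullivan1974}, Llerena \cite{Llerena1982,Llerena1985}, Farjoun \cite{Farjoun1995_CI}, and \cite{GTHT2000}); once the fiberwise localization is in hand, the verification via the long exact sequence of homotopy groups is routine.
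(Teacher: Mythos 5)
Your proposal is correct and follows essentially the same route as the paper's proof: reduce to connected spaces componentwise, apply fiberwise $M$-localization to the first Postnikov fibration $\widetilde{Y} \rightarrow Y \rightarrow K(\pi_{1}(Y),1)$ with the $1$-connected localization on the fiber, and conclude via the comparison of long exact sequences of homotopy groups (the paper cites \cite[Theorem 6.1.3]{Hirschhorn2003} for the fiberwise localization, but this is the same machinery you invoke).
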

The proof of the theorem sheds light on what goes wrong with the universal property of rationalization.

\begin{proof} If $X$ is a connected space, its $1$st Postnikov approximation yields the fibration $\widetilde{X} \rightarrow X \rightarrow K(\pi_{1}(X), 1)$, where $\widetilde{X}$ is the universal cover of $X$ and $K(\pi_{1}(X), 1)$ is an Eilenberg-Mac Lane space. Since $\widetilde{X}$ is a $1$-connected space, we invoke fiberwise $M$-localization, in the sense of \cite[Theorem 6.1.3]{Hirschhorn2003}, to obtain a map of fibrations
\[\begin{tikzcd}[row sep=3mm, column sep=3mm, ampersand replacement=\&]
	{\widetilde{X}} \&\& {\widetilde{X}_{M^{-1}\mathbb{Z}}} \\
	\\
	X \&\& {X_{M^{-1}\mathbb{Z}}} \\
	\\
	\& {K(\pi_{1}(X), 1),}
	\arrow[from=1-1, to=1-3]
	\arrow[from=1-1, to=3-1]
	\arrow[from=1-3, to=3-3]
	\arrow[from=3-1, to=3-3]
	\arrow[from=3-1, to=5-2]
	\arrow[from=3-3, to=5-2]
\end{tikzcd}\]
where the map on fibers $\widetilde{X} \rightarrow \widetilde{X}_{M^{-1}\mathbb{Z}}$ is an $M$-localization of the $1$-connected space $\widetilde{X}$ and, by the induced map of long exact sequences of homotopy groups, the map on total spaces $X \rightarrow X_{M^{-1}\mathbb{Z}}$ is the required $M$-localization of $X$. Applying this treatment to each path component yields the claim for disconnected spaces, as well.
\end{proof}
The downside with fiberwise localization in our setting is that, because it applies to fibrations over a fixed base space $B$, its universal property states that it is initial among fibrations over $B$ with local fibers. Thus, our application of fiberwise localization only yields a universal property of $M$-localization over $K(\pi_{1}(X), 1)$, and we are neither aware of how to get rid of this restriction nor convinced that we should even be able to do so.

The universal property of rationalization was pivotal for our characterization of rational homotopy equivalences in terms of function complexes, which gave us the description of the weak equivalences in the left Bousfield localization in Section \ref{section7}. Nevertheless, fiberwise localization still allows us to infer that, in our localization of all spaces in Section \ref{section10}, the weak equivalences are the $M$-local homotopy equivalences in our generalized sense.

\subsection{The construction of local spheres}\label{section9.2}
Let $M$ be a multiplicative subset of $\mathbb{Z}$ and let $n \geq 2$. To salvage our function complex characterization of $M$-local spaces in the absence of a universal property for $M$-localizatrion, we need the details of the CW complex construction of the $M$-local $n$-sphere $S^{n}_{M^{-1}\mathbb{Z}}$ from \cite[Chapter 9(a)]{FHT2001}.

We enumerate $M \cap \mathbb{N}=\{m_{1}, m_{2}, m_{3}, \dots\}.$ We denote the $(n+1)$-disk by $D^{n+1}$. Then, $S^{n}_{M^{-1}\mathbb{Z}}$ is the pushout
\[\begin{tikzcd}[row sep=3mm, column sep=3mm, ampersand replacement=\&]
	{\coprod\limits_{j=1}^{\infty} (S^{n})_{j}} \&\& {\bigvee\limits_{i=0}^{\infty} (S^{n})_{i}} \\
	\\
	{\coprod\limits_{j=1}^{\infty} (D^{n+1})_{j}} \&\& {S^{n}_{M^{-1}\mathbb{Z}},}
	\arrow["h", from=1-1, to=1-3]
	\arrow[from=1-1, to=3-1]
	\arrow[from=1-3, to=3-3]
	\arrow[from=3-1, to=3-3]
\end{tikzcd}\]
where, for $j \geq 1$, the $j$-th $(n+1)$-cell $(D^{n+1})_{j}$ is attached to $\bigvee\limits_{i=0}^{\infty} (S^{n})_{i}$ by an attaching map $h$ given by
\[S^{n} \xrightarrow{\textrm{pinch}} S^{n} \vee S^{n} \xrightarrow{\operatorname{id}_{S^{n}} \vee (-m_{j})} (S^{n})_{j-1} \vee (S^{n})_{j} \xrightarrow{\textrm{inclusion}} \bigvee\limits_{i=0}^{\infty} (S^{n})_{i}.\]
The map $(-m_{j})$ is a self-map of $S^{n}$ that has degree $(-m_{j})$. In other words, the $1$-connected CW complex \[S^{n}_{M^{-1}\mathbb{Z}}=\left(\bigvee\limits_{i=0}^{\infty} (S^{n})_{i}\right) \underset{h}{\bigcup} \left(\coprod\limits_{j=1}^{\infty} (D^{n+1})_{j}\right)\] is the mapping telescope of the sequence of maps $S^{n} \xrightarrow{m_{1}} S^{n} \xrightarrow{m_{2}} S^{n} \xrightarrow{m_{3}} \cdots$.

\subsection{A characterization of rational spaces}\label{section9.3}
We use the construction of $S^{n}_{M^{-1}\mathbb{Z}}$ to prove a key result.

\begin{proposition}\label{thebigdealnonsc} Let $M$ be a multiplicative subset of $\mathbb{Z}$ and let $n \geq 2$. Let $Y$ be a space such that $\pi_{n}(Y)$ is an $M^{-1}\mathbb{Z}$-module. Then, every map $f \colon S^{n} \rightarrow Y$ extends uniquely up to homotopy to a map $g \colon S^{n}_{M^{-1}\mathbb{Z}} \rightarrow Y$ such that the diagram

\[\begin{tikzcd}[row sep=3mm, column sep=3mm, ampersand replacement=\&]
	{S^{n}} \&\& {S^{n}_{M^{-1}\mathbb{Z}}} \\
	\\
	\&\& {Y}
	\arrow[from=1-1, to=1-3]
	\arrow["f"', from=1-1, to=3-3]
	\arrow["g"', dashed, from=1-3, to=3-3]
\end{tikzcd}\]
commutes.
\end{proposition}

\begin{proof} Given a map $f \colon S^{n} \rightarrow Y$, we know that $[f]$ is an element of the $M^{-1}\mathbb{Z}$-module $\pi_{n}(Y)$. Thus, there exists a unique element $[u]$ of $\pi_{n}(Y)$ such that $[f]=m_{1} \cdot [u]$, and the representative $u \colon S^{n} \rightarrow Y$ of $[u]$ is unique up to homotopy. Using the notation in our construction of $S^{n}_{M^{-1}\mathbb{Z}}$, we extend $f$ from $S^{n}=(S^{n})_{0}$ to $(S^{n})_{0} \vee (S^{n})_{1}$ uniquely up to homotopy by setting $g|_{(S^{n})_{1}}=u$. As $[f]-(m_{1} \cdot [u])=0 $, we can further extend $f$ to \[\left((S^{n})_{0} \vee (S^{n})_{1}\right) \underset{h}{\bigcup} (D^{n+1})_{1}\] uniquely up to homotopy. Continuing further down the telescope in this fashion yields the claim.
\end{proof}
Proposition \ref{thebigdealnonsc} salvages our characterization of $M$-local spaces in terms of function complexes.

\begin{corollary}\label{illdothisonenonsc} Let $M$ be a multiplicative subset of $\mathbb{Z}$. A space $Y$ is $M$-local if and only if, for every $n \geq 2$, the restriction map $\operatorname{Map}_{\mathcal{T}\mathrm{op}}(S^{n}_{M^{-1}\mathbb{Z}}, Y) \rightarrow \operatorname{Map}_{\mathcal{T}\mathrm{op}}(S^{n}, Y)$ is a weak equivalence.
\end{corollary}

\begin{proof} If $Y$ is $M$-local, then Proposition \ref{thebigdealnonsc} implies that the restriction map is a weak equivalence. The converse implication is established as in the proof of Proposition \ref{illdothisone}.
\end{proof}

\section{The family of model structures on spaces}\label{section10}
Let $M$ be a multiplicative subset of $\mathbb{Z}$. We left Bousfield localize the model category ${\mathcal{SS}\mathrm{ets}}$ of simplicial sets to obtain a model category $\mathcal{L}_{M^{-1}\mathbb{Z}}{\mathcal{SS}\mathrm{ets}}$ in which the weak equivalences are the $M$-local homotopy equivalences in our extended sense and the fibrant objects are the $M$-local Kan complexes. In particular, we produce a model category $\mathcal{L}_{\mathbb{Q}}{\mathcal{SS}\mathrm{ets}}$ whose weak equivalences are the rational homotopy equivalences and whose fibrant objects are the rational Kan complexes. Thus, we generalize Quillen's family of model structures to all spaces. Lastly, we show that our localization of spaces at rational homotopy does not coincide with Bousfield's localization at rational homology as rational homotopy and rational homology do not agree beyond the $1$-connected setting.

\subsection{The family of model structures on simplicial sets}\label{section10.1}
Proposition \ref{thebigdealnonsc} allows us to describe the left Bousfield localization of ${\mathcal{SS}\mathrm{ets}}$ at the set of maps \[T_{M^{-1}\mathbb{Z}}=\{\operatorname{Sing}(S^{n}) \rightarrow \operatorname{Sing}(S^{n}_{M^{-1}\mathbb{Z}}) \mid n \geq 2\}.\]

To do so, we first formulate the analog of Definition \ref{defrhtnscdef} for simplicial sets.

\begin{definition}\label{defrhtssetsnscdef} Let $M$ be a multiplicative subset of $\mathbb{Z}$. 

\begin{enumerate}

\item A simplicial map $f$ is an \emph{$M$-local homotopy equivalence} if $|f|$ is an $M$-local homotopy equivalence.

\item A simplicial set $K$ is \emph{$M$-local} if $|K|$ is an $M$-local space.

\item An \emph{$M$-localization} of $X$ is an $M$-local homotopy equivalence with $M$-local target $X \rightarrow X_{M^{-1}\mathbb{Z}}$.

\end{enumerate}
When $M^{-1}\mathbb{Z}=\mathbb{Q}$, we speak of a \emph{rational homotopy equivalence}, a \emph{rational simplicial set}, and a \emph{rationalization}.
\end{definition}

Having salvaged our characterization of $M$-local spaces in terms of function complexes, we can replicate our proof of Proposition \ref{charthelocals} to show that a simplicial set is $T_{M^{-1}\mathbb{Z}}$-local if and only if it is an $M$-local Kan complex. It remains to show that a $T_{M^{-1}\mathbb{Z}}$-local equivalence is precisely an $M$-local homotopy equivalence. To that end, we invoke a general result on fiberwise localizations for the fiberwise $M$-localization from Proposition \ref{thebigdealsad}.

\begin{lemma}{\cite[Theorem 6.1.3(3)]{Hirschhorn2003}}\label{prefiblocweqs} Let $M$ be a multiplicative subset of $\mathbb{Z}$. Every fiberwise $M$-localization map is a $T_{M^{-1}\mathbb{Z}}$-local equivalence.
\end{lemma}

Lemma \ref{prefiblocweqs} allows us to characterize the $T_{M^{-1}\mathbb{Z}}$-local equivalences in our desired fashion.

\begin{proposition}\label{fiblocweqs} Let $M$ be a multiplicative subset of $\mathbb{Z}$. Let $f \colon X \rightarrow Y$ be a simplicial map. The following statements are equivalent.

\begin{enumerate}

\item The map $f$ is a $T_{M^{-1}\mathbb{Z}}$-local equivalence.

\item The induced map on fiberwise $M$-localizations $f_{M^{-1}\mathbb{Z}}$ is a $T_{M^{-1}\mathbb{Z}}$-local equivalence.

\item The induced map on fiberwise $M$-localizations $f_{M^{-1}\mathbb{Z}}$ is a weak equivalence.

\item The induced map on fiberwise $M$-localizations $f_{M^{-1}\mathbb{Z}}$ is an $M$-local homotopy equivalence.

\item The map $f$ is an $M$-local homotopy equivalence.

\end{enumerate}

\end{proposition}

\begin{proof} Fiberwise $M$-localization induces the commutative square

\[\begin{tikzcd}[row sep=3mm, column sep=3mm, ampersand replacement=\&]
	X \&\& Y \\
	\\
	{X_{M^{-1}\mathbb{Z}}} \&\& {Y_{M^{-1}\mathbb{Z}}.}
	\arrow[from=1-1, to=1-3]
	\arrow[from=1-1, to=3-1]
	\arrow[from=1-3, to=3-3]
	\arrow[from=3-1, to=3-3]
\end{tikzcd}\]

Firstly, (1) and (2) are equivalent because the vertical maps are $T_{M^{-1}\mathbb{Z}}$-local equivalences by Lemma \ref{prefiblocweqs}. Then, (2) and (3) are equivalent by Remark \ref{deflblsdef1}. Now, (3) and (4) are equivalent by Example \ref{defrhtnscdef1}. Lastly, (4) and (5) are equivalent because the vertical maps are $M$-local homotopy equivalences by Proposition \ref{thebigdealsad}.

\end{proof}

We arrive at our promised generalization of Quillen's family of model structures to all spaces.

\begin{theorem}\label{ididitididitnsc} Let $M$ be a multiplicative subset of $\mathbb{Z}$. There is a model structure $\mathcal{L}_{M^{-1}\mathbb{Z}}{\mathcal{SS}\mathrm{ets}}$ on the category of simplicial sets in which a map $f$ is

\begin{enumerate}

\item a weak equivalence if and only if $f$ is an $M$-local homotopy equivalence; and

\item a cofibration if and only if $f$ is a monomorphism.

\end{enumerate}

A simplicial set $X$ is fibrant in $\mathcal{L}_{M^{-1}\mathbb{Z}}{\mathcal{SS}\mathrm{ets}}$ if and only if $X$ is an $M$-local Kan complex. The model category $\mathcal{L}_{M^{-1}\mathbb{Z}}{\mathcal{SS}\mathrm{ets}}$ is combinatorial, left proper, and simplicial with the structure from Example \ref{defsmcdef1}.
\end{theorem}
Again, a distinguished model structure in this family is that corresponding to the rational case $M^{-1}\mathbb{Z}=\mathbb{Q}$.

\begin{corollary}\label{ididitididitqnsc} There is a model structure $\mathcal{L}_{\mathbb{Q}}{\mathcal{SS}\mathrm{ets}}$ on the category of simplicial sets in which

\begin{enumerate}

\item a map is a weak equivalence if and only if it is a rational homotopy equivalence; and

\item a simplicial set is fibrant if and only if it is a rational Kan complex.

\end{enumerate}

\end{corollary}

Thus, our modern approach to Quillen's model structures on $1$-reduced simplicial sets generalizes to all spaces and yields a model category that encodes the rational homotopy theory of non-simply connected spaces.

\subsection{The model category of topological spaces}\label{section10.2}
Quillen's family of model structures on the category of $1$-reduced simplicial sets finds no counterpart for the category of $1$-connected spaces, for the latter category does not have all finite limits and colimits. For instance, consider the pullback
\[\begin{tikzcd}[row sep=3mm, column sep=3mm, ampersand replacement=\&]
	{S^{1}} \&\& {\mathbb{R}^{2}} \\
	\\
	{|\Delta[0]|} \&\& {\mathbb{R},}
	\arrow[from=1-1, to=1-3]
	\arrow[from=1-1, to=3-1]
	\arrow["f"', from=1-3, to=3-3]
	\arrow["1", from=3-1, to=3-3]
\end{tikzcd}\]
where $f$ is given by the formula $f(x, y)=x^{2}+y^{2}$, so $f^{-1}(1)=S^{1}$ is the unit circle in the plane. In this pullback, we start with the contractible spaces $|\Delta[0]|$, $\mathbb{R}^{2}$, and $\mathbb{R}$ and get $S^{1}$, which has fundamental group $\pi_{1}(S^{1}) \cong \mathbb{Z}$. However, now that we have a family of model structures on simplicial sets, we can produce its analog for the category $\mathcal{T}\mathrm{op}$ of topological spaces by left Bousfield localizing the model structure on $\mathcal{T}\mathrm{op}$ at our usual set.

We first recall the model structure on $\mathcal{T}\mathrm{op}$ from \cite[\S 8]{DwyerSpalinski1995}; the description is in terms of the following definition.

\begin{definition}\label{defauxfortopdef}

\begin{enumerate}

\item Given a sequence of inclusions $X_{0} \rightarrow X_{1} \rightarrow \cdots$ such that the pairs $(X_{n+1}, X_{n})$ are relative CW complexes, we say that $X_{0} \rightarrow \operatorname{colim}(X_{n})$ is a \emph{generalized relative CW inclusion}.

\item A \emph{Serre fibration} is a map that has the right lifting property with respect to the class of inclusions $\{A \times \{0\} \rightarrow A \times [0, 1] \mid A \textrm{ is a CW complex}\}$.

\end{enumerate}

\end{definition}

In other words, a Serre fibration is a map with the homotopy lifting property against all CW complexes.

\begin{theorem}[{\cite[Propositions 8.3 and 8.9]{DwyerSpalinski1995}}]\label{modcattopsp} The category ${\mathcal{T}\mathrm{op}}$ of topological spaces has a model structure in which a map $f$ is

\begin{enumerate}

\item a weak equivalence if and only if $f$ is a weak homotopy equivalence;

\item a cofibration if and only if $f$ is a retract of a generalized relative CW inclusion; and

\item a fibration if and only if $f$ is a Serre fibration.

\end{enumerate}

\end{theorem}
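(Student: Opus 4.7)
The plan is to obtain this model structure by right-transferring the model structure on ${\mathcal{SS}\mathrm{ets}}$ from Example \ref{defmodelcatdef1} along the geometric realization-singular complex adjunction $|-| \colon {\mathcal{SS}\mathrm{ets}} \rightleftarrows {\mathcal{T}\mathrm{op}} \colon \operatorname{Sing}$ of Proposition \ref{actuallysimpadj}. Concretely, I would declare a continuous map $f$ to be a weak equivalence (respectively a fibration) precisely when $\operatorname{Sing}(f)$ is a weak homotopy equivalence (respectively a Kan fibration), and then verify that this declaration agrees with the descriptions in the statement and satisfies the model category axioms.

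First I would identify generating sets: take $I_{{\mathcal{T}\mathrm{op}}}=\{|i_{n}| \colon S^{n-1} \to D^{n} \mid n \geq 0\}$ and $J_{{\mathcal{T}\mathrm{op}}}=\{|j_{n,k}| \colon |\Lambda[n,k]| \to |\Delta[n]| \mid n \geq 1,\ 0 \leq k \leq n\}$. Because the domains and codomains of these maps are compact, the small object argument applies (any map out of a compact space into a sequential colimit along closed inclusions factors through some finite stage). Next I would characterize right lifting properties by transporting along the adjunction: a map $p$ has the right lifting property with respect to $I_{{\mathcal{T}\mathrm{op}}}$ if and only if $\operatorname{Sing}(p)$ has the right lifting property with respect to $I$, hence if and only if $\operatorname{Sing}(p)$ is an acyclic Kan fibration; and $p$ has the right lifting property with respect to $J_{{\mathcal{T}\mathrm{op}}}$ if and only if $p$ is a Serre fibration (after checking that the class generated by $J_{{\mathcal{T}\mathrm{op}}}$ coincides up to retracts with the class $\{A\times\{0\}\to A\times[0,1]\mid A \text{ a CW complex}\}$ from Definition \ref{defauxfortopdef}).

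The main obstacle will be verifying that every relative $J_{{\mathcal{T}\mathrm{op}}}$-cell complex is a weak homotopy equivalence, since this is what guarantees that the small object argument applied to $J_{{\mathcal{T}\mathrm{op}}}$ actually produces acyclic cofibrations. The key input is that each $|j_{n,k}| \colon |\Lambda[n,k]| \to |\Delta[n]|$ is the inclusion of a strong deformation retract of topological spaces, and this property is preserved by pushouts along arbitrary continuous maps and by transfinite composition along closed inclusions; combined with Theorem \ref{quilleneqsecretly}, which allows us to detect weak homotopy equivalences after applying $\operatorname{Sing}$, this yields the required acyclicity. With factorizations in place, the lifting axioms follow by the standard retract argument: any cofibration factors as a relative $I_{{\mathcal{T}\mathrm{op}}}$-cell complex followed by an acyclic fibration, and an acyclic cofibration is then identified as a retract of a relative $J_{{\mathcal{T}\mathrm{op}}}$-cell complex via that factorization.

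Finally, to match the stated description of cofibrations, I would observe that a relative $I_{{\mathcal{T}\mathrm{op}}}$-cell complex is, by construction, a generalized relative CW inclusion in the sense of Definition \ref{defauxfortopdef}, since attaching a cell along $|i_{n}|\colon S^{n-1}\to D^{n}$ is the standard CW attaching construction and sequential colimits along such attachments are exactly sequential colimits of relative CW pairs. Hence the cofibrations, being retracts of relative $I_{{\mathcal{T}\mathrm{op}}}$-cell complexes, are precisely the retracts of generalized relative CW inclusions, as claimed.
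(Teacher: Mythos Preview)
The paper does not prove this theorem; it is quoted from \cite[Propositions 8.3 and 8.9]{DwyerSpalinski1995} without argument, so there is no ``paper's own proof'' to compare against. Your outline is the standard modern route (right transfer along $|-|\dashv\operatorname{Sing}$, i.e.\ Kan's recognition theorem for cofibrantly generated model structures), and it is essentially what Dwyer--Spali\'nski do as well.

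One small caution: your appeal to Theorem \ref{quilleneqsecretly} in the acyclicity step is either superfluous or potentially circular. You have already argued that each $|j_{n,k}|$ is an inclusion of a strong deformation retract and that this property is stable under pushouts and transfinite composition; that directly shows relative $J_{{\mathcal{T}\mathrm{op}}}$-cell complexes are homotopy equivalences, hence weak homotopy equivalences, with no need to pass through $\operatorname{Sing}$. If instead you intend to use Theorem \ref{quilleneqsecretly} to identify ``$\operatorname{Sing}(f)$ is a weak equivalence'' with ``$f$ is a weak homotopy equivalence,'' note that this equivalence is more elementary than Theorem \ref{quilleneqsecretly}: it follows from the natural isomorphism $\pi_n(\operatorname{Sing}(Y))\cong\pi_n(Y)$, which only uses that $\operatorname{Sing}(Y)$ is a Kan complex (Example \ref{defkanfibrdef2}) and the adjunction bijection on hom-sets, not the full force of the Quillen equivalence. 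Invoking Theorem \ref{quilleneqsecretly} as stated risks circularity, since its usual proof presupposes the model structure on ${\mathcal{T}\mathrm{op}}$ you are trying to build.
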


The model category of spaces is also cofibrantly generated and simplicial with the following structure.

\begin{proposition}[{\cite[Example 11.1.8]{Hirschhorn2003}}]\label{topcofgenv1} The model category ${\mathcal{T}\mathrm{op}}$ is cofibrantly generated.

\begin{enumerate}

\item A generating set of cofibrations is $|I|=\{|i_{n}| \colon |\partial\Delta[n]| \rightarrow |\Delta[n]| \mid n \geq 0\}$.

\item A generating set of acyclic cofibrations is $|J|=\{|j_{n, k}| \colon |\Lambda[n, k]| \rightarrow |\Delta[n]| \mid n \geq 1 \textrm{ and } 0 \leq k \leq n\}$.

\end{enumerate}

\end{proposition}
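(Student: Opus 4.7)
The plan is to transfer the cofibrant generation of $\mathcal{SS}\mathrm{ets}$ from Example \ref{defcofgenmcdef1} along the geometric realization-singular complex adjunction from Proposition \ref{actuallysimpadj}. First, I would verify that $|I|$ and $|J|$ permit the small object argument: the domains and codomains are compact CW complexes, hence $\aleph_0$-small with respect to sequential colimits along closed $T_1$-inclusions, and in particular with respect to the transfinite compositions of pushouts of maps in $|I|$ and $|J|$ that arise in the small object argument. Moreover, a pushout of some $|i_{n}|$ attaches an $n$-cell, so transfinite compositions of pushouts of maps in $|I|$ are generalized relative CW inclusions in the sense of Definition \ref{defauxfortopdef}, and in particular are cofibrations in $\mathcal{T}\mathrm{op}$.

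The substantive step is the pair of characterizations: for a continuous map $p \colon X \to Y$,
\begin{enumerate}
\item $p$ has the right lifting property with respect to $|J|$ if and only if $p$ is a Serre fibration;
\item $p$ has the right lifting property with respect to $|I|$ if and only if $p$ is a Serre fibration and a weak homotopy equivalence.
\end{enumerate}
For (1), the key classical observation is that for every $n \geq 1$ and $0 \leq k \leq n$, the pair $(|\Delta[n]|, |\Lambda[n, k]|)$ is homeomorphic to the pair $(|\Delta[n-1]| \times [0, 1], |\Delta[n-1]| \times \{0\})$ via a suitable retraction-by-projection, so a Serre fibration has the right lifting property with respect to $|j_{n, k}|$. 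Conversely, by the adjointness of $|-|$ and $\operatorname{Sing}$ from Proposition \ref{actuallysimpadj}, if $p$ has the right lifting property with respect to $|J|$ then $\operatorname{Sing}(p)$ is a Kan fibration; then to show $p$ itself has the homotopy lifting property with respect to an arbitrary CW complex $A$, I would argue by induction on cells, reducing each extension problem over $A \times [0, 1]$ to a sequence of horn-filling problems after realizing simplicial approximations, again using the pair homeomorphism above.

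For (2), I would invoke the adjunction: $p$ has the right lifting property with respect to $|I|$ if and only if $\operatorname{Sing}(p)$ has the right lifting property with respect to $I$, which by Example \ref{defcofgenmcdef1} is equivalent to $\operatorname{Sing}(p)$ being an acyclic Kan fibration. Since Kan fibrations of $\operatorname{Sing}$'s image detect Serre fibrations of $p$ by (1), and since $\operatorname{Sing}$ both preserves and reflects weak homotopy equivalences (preservation is the definition, and reflection follows from Theorem \ref{quilleneqsecretly}(2) together with the triangle identity and two-out-of-three), this translates to $p$ being a Serre fibration that is also a weak homotopy equivalence, i.e., an acyclic fibration in the model structure of Theorem \ref{modcattopsp}. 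Together with the small object argument verification, (1) and (2) establish Definition \ref{defcofgenmcdef} for $|I|$ and $|J|$.

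The main obstacle is the converse direction of (1): showing that $\operatorname{Sing}(p)$ being a Kan fibration forces $p$ to satisfy the homotopy lifting property against every CW complex. This is the essential content of the classical Quillen theorem and requires the careful pair-homeomorphism $(|\Delta[n]|, |\Lambda[n, k]|) \cong (|\Delta[n-1]| \times [0,1], |\Delta[n-1]| \times \{0\})$ together with the observation that for a CW pair $(A \times [0, 1], A \times \{0\})$, the inclusion is a generalized relative CW inclusion whose attaching data can be expressed, up to homotopy equivalence of pairs, via horn inclusions.
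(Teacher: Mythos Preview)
The paper does not prove this proposition; it is stated with a citation to \cite[Example 11.1.8]{Hirschhorn2003} and no proof is given. So there is no in-paper argument to compare your proposal against.

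That said, your outline is essentially the standard argument and is correct in its broad strokes. One remark: your treatment of the converse direction of (1) is more circuitous than necessary. You do not need to pass through $\operatorname{Sing}(p)$ at all. Once you have the pair homeomorphism $(|\Delta[n]|, |\Lambda[n,k]|) \cong (D^{n-1} \times [0,1], D^{n-1} \times \{0\})$, the right lifting property against $|J|$ is literally the same as the right lifting property against the set $\{D^{m} \times \{0\} \to D^{m} \times [0,1] \mid m \geq 0\}$. From there, the homotopy lifting property against an arbitrary CW complex $A$ follows by a direct cell-by-cell induction on the skeleta of $A$, since $A \times \{0\} \to A \times [0,1]$ is built as a relative CW complex from precisely these disk-cylinder inclusions. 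No simplicial approximation is needed, and the phrase ``realizing simplicial approximations'' in your sketch is a red herring that would only complicate matters.
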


\begin{proposition}[{\cite[Example 9.1.15]{Hirschhorn2003}}]\label{topsimpv1} Given two spaces $X$ and $Y$ and a simplicial set $Z$, we define

\begin{enumerate}

\item the function complex $\operatorname{Map}_{{\mathcal{T}\mathrm{op}}}(X, Y)$ to be the simplicial set with $n$-simplices $\operatorname{Hom}_{{\mathcal{T}\mathrm{op}}}(X \times |\Delta[n]|, Y)$;

\item the tensor of $X$ with $Z$ to be the space $X \times |Z|$; and

\item the cotensor of $X$ with $Z$ to be the space $X^{|Z|}$ of maps from $|Z|$ to $X$ with the compact-open topology.

\end{enumerate}
With these definitions, ${\mathcal{T}\mathrm{op}}$ has the structure of a simplicial model category.
\end{proposition}

The model category of topological spaces differs from that of simplicial sets in three noteworthy respects.

\begin{enumerate}

\item It is not combinatorial because the category ${\mathcal{T}\mathrm{op}}$ is not locally presentable \cite[Example 1.18(5)]{AdamekRosicky1994}.

\item It is left proper \cite[Theorem 13.1.10]{DwyerSpalinski1995}, but not by Proposition \ref{leftpropernoworries}, for not every space is cofibrant.

\item Every topological space is Serre fibrant, whereas not every simplicial set is a Kan complex.

\end{enumerate}

Nevertheless, the geometric realization-singular complex adjunction is a Quillen equivalence \cite[Chapter I.4, Example 2]{Quillen1967}, which renders simplicial sets a model for spaces. Still, the first difference raises the question of whether ${\mathcal{T}\mathrm{op}}$ can be localized. We note that ${\mathcal{T}\mathrm{op}}$ shares with ${\mathcal{SS}\mathrm{ets}}$ the feature of being a cellular model category \cite[Proposition 12.1.4]{Hirschhorn2003}. This property is a different way of strengthening cofibrant generation, and it lets us localize ${\mathcal{T}\mathrm{op}}$, for Theorem \ref{lblexists} holds if we replace ``combinatorial'' with ``cellular'' \cite[Theorem 4.1.1]{Hirschhorn2003}.

\subsection{The family of model structures on topological spaces}\label{section10.3}
Corollary \ref{illdothisonenonsc} and Proposition \ref{fiblocweqs} describe the left Bousfield localization of $\mathcal{T}\mathrm{op}$ at the set of maps $\{S^{n} \rightarrow S^{n}_{M^{-1}\mathbb{Z}} \mid n \geq 2\}$ with no additional work needed.

\begin{theorem}\label{ididitididitnsct} Let $M$ be a multiplicative subset of $\mathbb{Z}$. There is a model structure $\mathcal{L}_{M^{-1}\mathbb{Z}}{\mathcal{T}\mathrm{op}}$ on the category of topological spaces in which a map $f$ is

\begin{enumerate}

\item a weak equivalence if and only if $f$ is an $M$-local homotopy equivalence; and

\item a cofibration if and only if $f$ is a retract of a generalized relative CW inclusion.

\end{enumerate}
A space $X$ is fibrant in $\mathcal{L}_{M^{-1}\mathbb{Z}}{\mathcal{T}\mathrm{op}}$ if and only if $X$ is $M$-local. The model category $\mathcal{L}_{M^{-1}\mathbb{Z}}{\mathcal{T}\mathrm{op}}$ is cellular, left proper, and simplicial with the structure from Proposition \ref{topsimpv1}.
\end{theorem}
In particular, we obtain a model category whose weak equivalences are the rational homotopy equivalences and whose fibrant objects are the rational spaces.

\begin{remark}\label{ididitididitnsctrmk} By Proposition \ref{comdiagramheqs}, $\mathcal{L}_{M^{-1}\mathbb{Z}}{\mathcal{T}\mathrm{op}}$ is also the left Bousfield localization of ${\mathcal{T}\mathrm{op}}$ at the set of maps \[|T_{M^{-1}\mathbb{Z}}|=\{|\operatorname{Sing}(S^{n})| \rightarrow |\operatorname{Sing}(S^{n}_{M^{-1}\mathbb{Z}})| \mid n \geq 2\}.\] Thus, by \cite[Theorem 3.3.20]{Hirschhorn2003}, the Quillen equivalence $|-| \colon \mathcal{SS}\mathrm{ets} \rightleftarrows \mathcal{T}\mathrm{op} \colon \operatorname{Sing}$ induces a Quillen equivalence
\[|-| \colon \mathcal{L}_{M^{-1}\mathbb{Z}}\mathcal{SS}\mathrm{ets} \rightleftarrows \mathcal{L}_{M^{-1}\mathbb{Z}}\mathcal{T}\mathrm{op} \colon \operatorname{Sing}.\]
\end{remark}

\subsection{Rational homotopy versus rational homology}\label{section10.4}
We conclude this paper by contrasting our localization of all spaces at rational homotopy to Bousfield's localization at rational homology. Our argument is best presented using topological spaces, so we first formulate Theorem \ref{ptedhomloc} for topological spaces.

\begin{theorem}[{\cite[Theorem 10.2]{Bousfield1975}}]\label{ptedhomloctop} Let $M$ be a multiplicative subset of $\mathbb{Z}$. There is a model structure $\mathcal{L}_{HM^{-1}\mathbb{Z}}\mathcal{T}\mathrm{op}$ on the category of topological spaces in which a map $f$ is

\begin{enumerate}

\item a weak equivalence if and only if $f$ is an $M$-local homology isomorphism; and

\item a cofibration if and only if $f$ is a retract of a generalized relative CW inclusion.
\end{enumerate}
This model category is cellular, left proper, and simplicial with the structure from Proposition \ref{topsimpv1}.
\end{theorem}

\begin{remark}\label{ptedhomloctoprmk} By \cite[\S 2.5]{Bousfield1997}, $\mathcal{L}_{HM^{-1}\mathbb{Z}}\mathcal{T}\mathrm{op}$ is the left Bousfield localization of $\mathcal{T}\mathrm{op}$ at a single map, so it inherits the properties of being cellular, left proper, and simplicial.
\end{remark}
We show that our localization at rational homotopy $\mathcal{L}_{\mathbb{Q}}\mathcal{T}\mathrm{op}$ differs from that at rational homology $\mathcal{L}_{H\mathbb{Q}}\mathcal{T}\mathrm{op}$.

\begin{proposition}\label{theyarenotthesame} The model structures $\mathcal{L}_{H\mathbb{Q}}\mathcal{T}\mathrm{op}$ and $\mathcal{L}_{\mathbb{Q}}\mathcal{T}\mathrm{op}$ on topological spaces are not equal.
\end{proposition}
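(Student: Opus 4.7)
The plan is to exhibit an explicit map that is a weak equivalence in $\mathcal{L}_{H\mathbb{Q}}\mathcal{T}\mathrm{op}$ but not in $\mathcal{L}_{\mathbb{Q}}\mathcal{T}\mathrm{op}$. Since Theorems \ref{unptedhomloctopq} and \ref{ididitididitqnsct} tell us that both model structures share the same class of cofibrations (retracts of generalized relative CW inclusions), different classes of weak equivalences would imply that the two model structures are distinct.

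Following the hint of the discussion after the statement, the natural candidate is the unique map $c \colon \mathbb{R}P^{2} \to \ast$. By Example \ref{defhlgyisodef2}, $c$ is a rational homology isomorphism, so it is a weak equivalence in $\mathcal{L}_{H\mathbb{Q}}\mathcal{T}\mathrm{op}$. It therefore suffices to show that $c$ is not a weak equivalence in $\mathcal{L}_{\mathbb{Q}}\mathcal{T}\mathrm{op}$.

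To do so, I use the characterization of weak equivalences in Theorem \ref{ididitididitqnsct}(1) by producing a rational space $K$ for which the induced map on function complexes fails to be a weak homotopy equivalence. The natural choice is $K = \mathbb{R}P^{\infty}$, which is an Eilenberg--Mac~Lane space $K(\mathbb{Z}/2, 1)$; all of its higher homotopy groups vanish and are therefore $\mathbb{Q}$-vector spaces, so $\mathbb{R}P^{\infty}$ is rational in the sense of Definition \ref{defrhtnscdef}. I then detect the failure at the level of $\pi_{0}$: on one side, $\pi_{0}\!\left(\operatorname{Map}_{\mathcal{T}\mathrm{op}}(\ast, \mathbb{R}P^{\infty})\right) \cong \pi_{0}(\mathbb{R}P^{\infty})$ is a singleton, while on the other side, the standard identification $[\mathbb{R}P^{2}, K(\mathbb{Z}/2, 1)] \cong H^{1}(\mathbb{R}P^{2}; \mathbb{Z}/2) \cong \mathbb{Z}/2$ yields $\pi_{0}\!\left(\operatorname{Map}_{\mathcal{T}\mathrm{op}}(\mathbb{R}P^{2}, \mathbb{R}P^{\infty})\right) \cong \mathbb{Z}/2$.

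Hence the induced map $c^{\ast} \colon \operatorname{Map}_{\mathcal{T}\mathrm{op}}(\ast, \mathbb{R}P^{\infty}) \rightarrow \operatorname{Map}_{\mathcal{T}\mathrm{op}}(\mathbb{R}P^{2}, \mathbb{R}P^{\infty})$ is not even a bijection on path components, so it is not a weak homotopy equivalence, and $c$ is not a weak equivalence in $\mathcal{L}_{\mathbb{Q}}\mathcal{T}\mathrm{op}$. The argument is short and contains no real obstacle; the only subtlety is to verify that $\mathbb{R}P^{\infty}$ genuinely qualifies as a rational space in our generalized sense from Definition \ref{defrhtnscdef}, which is immediate because being a $K(\pi, 1)$ forces all higher homotopy groups to be trivial and hence trivially $\mathbb{Q}$-modules. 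This instance of the failure of rational homology and rational homotopy to agree on non-simply-connected spaces is precisely the phenomenon alluded to in the statement.
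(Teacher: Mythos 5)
Your proposal is correct and follows essentially the same route as the paper: the map $\mathbb{R}P^{2} \to \ast$ is a rational homology isomorphism but fails the function-complex criterion against the rational space $\mathbb{R}P^{\infty}$, detected on $\pi_{0}$. The only cosmetic difference is that you compute $[\mathbb{R}P^{2}, \mathbb{R}P^{\infty}] \cong H^{1}(\mathbb{R}P^{2}; \mathbb{Z}/2\mathbb{Z}) \cong \mathbb{Z}/2\mathbb{Z}$ via the Eilenberg--Mac~Lane classification, whereas the paper exhibits the non-null inclusion $\mathbb{R}P^{2} \to \mathbb{R}P^{\infty}$ directly through its effect on $\mathbb{Z}/2\mathbb{Z}$-cohomology rings; both settle the same point.
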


\begin{proof} We show that $c \colon \mathbb{R}P^{2} \rightarrow |\Delta[0]|$ is a weak equivalence in $\mathcal{L}_{H\mathbb{Q}}\mathcal{T}\mathrm{op}$ but not in $\mathcal{L}_{\mathbb{Q}}\mathcal{T}\mathrm{op}$. Firstly, $c$ is a weak equivalence in $\mathcal{L}_{H\mathbb{Q}}\mathcal{T}\mathrm{op}$ because, by Example \ref{defhlgyisodef2}, $c$ is a rational homology isomorphism. However, $c$ is not a weak equivalence in $\mathcal{L}_{\mathbb{Q}}\mathcal{T}\mathrm{op}$ because, by Example \ref{defrhtnscdef22}, $c$ is not a rational homotopy equivalence.
\end{proof}

\bibliographystyle{abbrv}
\bibliography{Eleftherios_Chatzitheodoridis_-_A_modern_perspective_on_rational_homotopy_theory_v2}

\end{document}